\theoremstyle{definition}
\newtheorem{lemma}{Lemma}[section]
\newtheorem{theorem}[lemma]{Theorem}
\newtheorem{claim}[lemma]{Claim}
\newtheorem{remark}[lemma]{Remark}
\newtheorem{definition}[lemma]{Definition}
\newtheorem{proposition}[lemma]{Proposition}
\renewenvironment{proof}{\textbf{Proof.}}{\qed}
\begin{document}

\title{Simultaneous Universal Pad\'{e} - Taylor Approximation}
\author{K. Makridis}
\maketitle

\begin{abstract}

We prove simultaneous Universal Approximation of a certain type of Pad\'{e} Approximants and of Taylor series with the same indexes. This is a generic phenomenon in $H(\Omega)$ for any simply connected domain $\Omega$, as well as in several other spaces. Our results are valid for one center of expansion and for several centers, as well.

\end{abstract}

AMS clasification numbers: 30K05, 30E10, 41A20.

\medskip

Keywords and phrases: Taylor series, Pad\'{e} Approximants, Baire's theorem, generic property, polynomial and rational approximation.

\section{Introduction} %1

It is well known (\cite{ena}, \cite{dio}, \cite{tria}, \cite{tessera}, \cite{pente}, \cite{exi}) that for every simply connected domain $\Omega \subseteq \mathbb{C}$ and for every $\zeta \in \Omega$ there exists a holomorphic function $f \in H(\Omega)$ satisfying the following property: 

For every compact set $K \subseteq \mathbb{C} \setminus \Omega $ with connected complement and for every polynomial $P$ there exists a sequence $(\lambda_n)_{n \geq 1} \in \mathbb{N}$ such that:

\begin{itemize}

\item[(1)]
$\sup_{z \in K} |S_{\lambda_n}(f, \zeta)(z) - P(z)| \to 0$ as $n \to + \infty$

\item[(2)]
For every compact set $L \subseteq \Omega$ it holds:
$$ \sup_{z \in L} |S_{\lambda_n}(f, \zeta)(z) - f(z)| \to 0 $$ 

as $n \to + \infty$  

\end{itemize}

Here $S_{N}(f, \zeta)(z) = \sum_{k = 0}^{N} \frac{f^{(k)}(\zeta)}{k!} (z - \zeta)^k$ denote the partial sums of the Taylor expansion of $f$ with center $\zeta$. Furthermore, the set of all functions $f \in H(\Omega)$ satisfying the previous properties is dense and $G_{\delta}$ in $H(\Omega)$, where the space $H(\Omega)$ is endowed with the topology of uniform convergence on compacta. That is, universality of Taylor series is a generic property in $H(\Omega)$ for every simply connected domain $\Omega$.

Recently the partial sums $S_{N}(f, \zeta)(\cdot)$ have been replaced by some rational functions, namely the Pad\'{e} approximants of $f$ (\cite{epta}, \cite{okto}, \cite{ennia}, \cite{deka}, \cite{enteka}). There are two types of universal Pad\'{e} approximants. One of these types is when we fix a sequence $(p_n, q_n) \in \mathbb{N} \times \mathbb{N}$ with $p_n \to + \infty$ and in approximations $(1)$ and $(2)$ above we replace $S_{\lambda_n}(f, \zeta)(\cdot)$ by the Pad\'{e} approximants $[f; p_n / q_n]_{\zeta}(\cdot)$, which we assume that they exist and have a unique representation $[f; p / q]_{\zeta}(\cdot) = \frac{A(z)}{B(z)}$, where the functions $A(z)$ and $B(z)$ are polynomials $A(z) = \sum_{j = 0}^{p} c_{j} (z - \zeta)^{j}$ and $B(z) = \sum_{j = 0}^{q} b_{j} (z - \zeta)^{j}$ with $b_0 = 1$. This new universality is also generic in $H(\Omega)$. When we have two dense and $G_{\delta}$ sets in a complete metric space, their intersection is also a dense and $G_{\delta}$ set according to Baire's theorem. In this way we find a holomorphic function $f \in H(\Omega)$ which is a universal Taylor series and has universal Pad\'{e} approximants. But in the following approximations:

\begin{itemize}

\item[(1)]
$\sup_{z \in K} |S_{\lambda_n}(f, \zeta)(z) - P(z)| \to 0$, as $n \to + \infty$

\item[(2)]
$\sup_{z \in L} |S_{\lambda_n}(f, \zeta)(z) - f(z)| \to 0 $, as $n \to + \infty$ and that holds for every compact set $L \subseteq \Omega$

\item[(3)]
$\sup_{z \in K} |[f; p_{\mu_n} / q_{\mu_n}]_{\zeta}(z) - P(z)| \to 0$, as $n \to + \infty$

\item[(4)]
$\sup_{z \in L} |[f; p_{\mu_n} / q_{\mu_n}]_{\zeta}(z) - f(z)| \to 0$, as $n \to + \infty$ and that holds for every compact set $L \subseteq \Omega$

\end{itemize}

the indexes $\lambda_n$ and $p_{\mu_n}$ are not related. However,  repeating the proofs of genericities simultaneously we obtain that $\lambda_n = p_{\mu_n}$ (see also Corollary 2 of \cite{tessera}, \cite{dio}, \cite{tria}, \cite{dodeka}). This phenomenon is also generic in $H(\Omega)$.

We also obtain a variant of the above result valid simultaneously for several centers of expansion $\zeta \in \Omega$. This is the content of section $4$. In section $3$ we give a variant for formal series in the sense of Seleznev (\cite{dekatria}, \cite{tessera}). In section $5$ we prove a weaker result than the one in section $4$, again generic in $H(\Omega)$, for any simply connected domain $\Omega$, where the universal approximation is not required to be valid also on the boundary of $\Omega$. For Taylor series this kind of universality was obtain in the $70's$ by Luh and Chui - Parnes (\cite{dekatessera}, \cite{dekapente}, \cite{dekaexi}). The stronger notion of universality where the approximation is also valid on the boundary of $\Omega$ was obtain by V. Nestoridis in 1996 (\cite{ena}, \cite{dio}, \cite{tria}). If the universal approximation is valid on the boundary also, then the universal function $f$ has some very wild properties (\cite{ena}, \cite{tria}, \cite{dekaepta}). But if the universal approximation is not required to be valid on the boundary of $\Omega$, then the universal function can be smooth on the boundary. Thus, we obtain generic universality in $A(\Omega)$, provided that $\overline{\Omega}^{o} = \Omega$ and also $\{ \infty \} \cup \mathbb{C} \setminus \overline{\Omega}$ is connected (section $6$) and in a closed subspace of $A^{\infty}(\Omega)$ provided that $\{ \infty \} \cup \mathbb{C} \setminus \overline{\Omega}$ is connected (section $7$). 

Finally in section $8$ we prove a result where in a part of the boundary of $\Omega$ the universal approximation is valid while on another disjoint part of the boundary, the universal function is smooth. In section $2$ we include a few preliminaries mainly about Pad\'{e} approximants needed in the sequel.

\section{Preliminaries} %2

\begin{definition} \label{definition 2.1}

Let $\zeta \in \mathbb{C}$, $(a_n)_{n \geq 0} \subseteq \mathbb{C}$ and $f$ a formal power series with center $\zeta$:
$$ f(z) = \sum_{k = 0}^{+ \infty} a_k (z - \zeta)^k $$

Now, for every $p, q \in \mathbb{N}$ we consider a function of the form:
$$ [f; p / q]_{\zeta}(z) = \frac{A(z)}{B(z)} $$

where the functions $A(z)$ and $B(z)$ are polynomials such that $deg(A(z)) \leq p$, $deg(B(z)) \leq q$, $B(\zeta) = 1$ and also the Taylor expansion of the function $[f; p / q]_{\zeta}(z) = \sum_{k = 0}^{+ \infty} b_k (z - \zeta)^k$ (with center $\zeta \in \mathbb{C}$) satisfies:
$$ a_k = b_k \; \text{for every} \; k = 0, \cdots p + q $$

If such a rational function exists, it is called the $(p, q)$ - Pad\'{e} approximant of $f$. Very often the power series $f$ is the Taylor development of a holomorphic function with center $\zeta$; a point in its domain of definition.

\end{definition}

\begin{remark} \label{remark 2.2}

According to Definition \ref{definition 2.1} we obtain that for $q = 0$ the $(p, 0)$ - Pad\'{e} approximant of $f$ exists trivially for every $p \in \mathbb{N}$, since:
$$ [f; p / 0]_{\zeta}(z) = \sum_{k = 0}^{p} a_k (z - \zeta)^k $$

for every $z \in \mathbb{C}$.
\end{remark}

\begin{remark} \label{remark 2.3}

For $q \geq 1$ Definition \ref{definition 2.1} does not necessarily implies the existence of Pad\'{e} approximants. However, if a Pad\'{e} approximant exists then it is unique as a rational funtion. It is known (\cite{dekaokto}) that a necessary and sufficient condition for the existence and uniqueness of the polynomials $A(z)$ and $B(z)$ above is that the following $q \times q$ Hankel determinant:

$$ D_{p, q}(f, \zeta) = det
\begin{vmatrix}
a_{p - q + 1} & a_{p - q + 2} & \cdots & a_{p} \\ 
a_{p - q + 2} & a_{p - q + 3} & \cdots & a_{p + 1} \\
\vdots & \vdots & \ddots &\vdots \\
a_{p} & a_{p + 1} & \cdots & a_{p + q + 1} \\ 
\end{vmatrix} $$

is not equal to $0$, i.e. $D_{p, q}(f, \zeta)  \neq 0$. In the previous determinant we set $a_k = 0$ for every $k < 0$. In addition, if $D_{p, q}(f, \zeta)  \neq 0$ we also write $f \in D_{p, q}(\zeta)$.

In this case, the $(p, q)$ - Pad\'{e} approximant of $f$ (with center $\zeta \in \mathbb{C}$) is given by the following formula:
$$ [f; p / q]_{\zeta}(z) = \frac{A(f, \zeta)(z)}{B(f, \zeta)(z)} $$

where: 

$$ A(f, \zeta)(z) = $$
$$ = det
\begin{vmatrix}
(z - \zeta)^{q} S_{p - q}(f, \zeta)(z) & (z - \zeta)^{q - 1} S_{p - q + 1}(f, \zeta)(z) & \cdots & S_{p}(f, \zeta)(z) \\ 
a_{p - q + 1} & a_{p - q + 2} & \cdots & a_{p + 1} \\
\vdots & \vdots & \ddots &\vdots \\
a_{p} & a_{p + 1} & \cdots & a_{p + q} \\ 
\end{vmatrix} $$

\bigskip

$$ B(f, \zeta)(z) = det
\begin{vmatrix}
(z - \zeta)^{q} & (z - \zeta)^{q - 1} & \cdots & 1 \\ 
a_{p - q + 1} & a_{p - q + 2} & \cdots & a_{p + 1} \\
\vdots & \vdots & \ddots &\vdots \\
a_{p} & a_{p + 1} & \cdots & a_{p + q} \\ 
\end{vmatrix} $$

with:

$$ S_k(f, \zeta)(z) = \begin{cases}
\sum_{i = 0}^{k} a_i (z - \zeta)^i, \; \text{if} \; k \geq 0 \\
0, \; \text{if} \; k < 0 
\end{cases} $$

The previous relations are called Jacobi formulas. Also, in this case, we notice that the polynomials $A(f, \zeta)(z)$ and $B(f, \zeta)(z)$ do not have any common zeros in $\mathbb{C}$, provided that $f \in D_{p, q}(\zeta)$.

\end{remark}

We will also make use of the following proposition.

\begin{proposition} \label{proposition 2.4} (\cite{dekaokto}) Let $f(z) = \frac{A(z)}{B(z)}$ be a rational function where the functions $A(z)$ and $B(z)$ are polynomials with $deg(A(z)) = p_0$ and $deg(B(z))$ $= q_0$. In addition, suppose that $A(z)$ and $B(z)$ do not have any common zero in $\mathbb{C}$. Then for every $\zeta \in \mathbb{C}$ such that $B(\zeta) \neq 0$ we have:

\begin{itemize}

\item[(1)]
$f \in D_{p_0, q_0} (\zeta)$

\item[(2)]
$f \in D_{p, q_0} (\zeta)$ for every $p \geq p_0$

\item[(3)]
$f \in D_{p_0, q} (\zeta)$ for every $q \geq q_0$

\end{itemize}

Moreover, for every $(p, q) \in \mathbb{N} \times \mathbb{N}$ with $p > p_0$ and $q > q_0$ we have:
$$ f \not\in D_{p, q} (\zeta) $$

In all cases above we obtain that $f(z) \equiv [f; p / q]_{\zeta}(z)$.

\end{proposition}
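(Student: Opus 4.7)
The plan is to translate so that $\zeta = 0$ and, using $B(\zeta) \neq 0$, normalize $B$ so that $B(0) = 1$. I would then invoke the equivalence recorded in Remark \ref{remark 2.3}: $f \in D_{p,q}(0)$ iff there exists a \emph{unique} polynomial pair $(A', B')$ with $\deg A' \leq p$, $\deg B' \leq q$, $B'(0) = 1$, whose Taylor coefficients match those of $f$ through order $p+q$. For (1)--(3), existence is immediate: the pair $(A, B)$ itself satisfies all the constraints, and since $A/B \equiv f$, the Taylor coefficients agree to \emph{all} orders, a fortiori through order $p+q$.

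The main work is uniqueness in (1)--(3). Given any competing pair $(A', B')$, the Taylor congruence $A' - B' f = O(z^{p+q+1})$, multiplied by $B$ and combined with $Bf = A$, becomes $A B' - A' B = O(z^{p+q+1})$ as a formal series. The crucial observation is that $AB' - A'B$ is actually a polynomial of degree $\leq p+q$ in each of the three cases: in (2), $\deg(AB') \leq p_0 + q_0 \leq p + q_0 = p+q$ and $\deg(A'B) \leq p + q_0 = p+q$; in (3), symmetrically, both summands have degree $\leq p_0 + q = p+q$; case (1) sits inside either. A polynomial of degree $\leq p+q$ vanishing at the origin to order $\geq p+q+1$ must be identically zero, so $AB' = A'B$. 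Coprimality of $A$ and $B$ then yields $B' = BQ$ and $A' = AQ$ for some polynomial $Q$; the remaining degree bound (on $B'$ in case (2), on $A'$ in case (3)) forces $\deg Q = 0$, and the normalization $B'(0)=1$ pins down $Q \equiv 1$. Hence $(A', B') = (A, B)$, which simultaneously gives $f \in D_{p,q}(\zeta)$ and $[f; p/q]_\zeta = A/B \equiv f$.

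For the final assertion, when $p > p_0$ and $q > q_0$ I would exhibit explicit non-uniqueness. Pick any non-constant polynomial $P$ with $P(\zeta) = 1$ and $\deg P \leq \min(p - p_0,\, q - q_0)$, for example $P(z) = 1 + (z-\zeta)$. Then $(A, B)$ and $(AP, BP)$ are two distinct polynomial pairs meeting all the Pad\'e requirements and both representing the same rational function $f$. By Remark \ref{remark 2.3} the failure of uniqueness is exactly $D_{p,q}(f, \zeta) = 0$, so $f \notin D_{p,q}(\zeta)$. The only step that genuinely demands care is the degree count on $AB' - A'B$, and it is precisely the collapse of this bound once both $p > p_0$ and $q > q_0$ that explains why non-uniqueness appears there; the rest is algebraic bookkeeping.
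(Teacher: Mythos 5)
The paper does not prove this proposition at all: it is quoted from Baker and Graves--Morris (\cite{dekaokto}) with no argument supplied, so there is no in-paper proof to compare against. Judged on its own, your argument is correct and self-contained, given that you are entitled to use the equivalence stated in Remark \ref{remark 2.3} (non-vanishing of the Hankel determinant $\iff$ existence \emph{and} uniqueness of the normalized pair $(A',B')$). The existence step, the cross-multiplication identity $AB'-A'B=O((z-\zeta)^{p+q+1})$, the degree count showing this polynomial has degree at most $p+q$ in cases (1)--(3), the coprimality argument forcing $B'=BQ$, $A'=AQ$ with $Q$ constant and then $Q\equiv 1$ from $B'(\zeta)=1$, and the explicit pair $(AP,BP)$ with $P(z)=1+(z-\zeta)$ witnessing non-uniqueness when $p>p_0$ and $q>q_0$ are all sound; this also yields $[f;p/q]_\zeta\equiv f$ in the positive cases since the unique pair is $(A,B)$ itself.

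One small inaccuracy in your closing remark: when $p>p_0$ and $q>q_0$ the degree bound on $AB'-A'B$ does \emph{not} collapse --- both terms still have degree at most $\max(p_0+q,\,p+q_0)<p+q$, so one still concludes $AB'=A'B$ and hence $B'=BQ$, $A'=AQ$. What actually fails is the last step: neither $\deg B'\leq q$ nor $\deg A'\leq p$ any longer forces $\deg Q=0$, which is exactly the slack your polynomial $P$ exploits. This does not affect the proof, since your non-membership argument rests on the explicit non-unique pair and not on that heuristic. You might also note explicitly that $\deg A=p_0\in\mathbb{N}$ excludes $A\equiv 0$, so that the degree comparison $\deg(AQ)=\deg A+\deg Q$ used in case (3) is legitimate.
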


\section{Universality in the sense of Seleznev} %3

In this section we prove Seleznev's type simultaneous universal Pad\'{e} - Taylor approximation. See \cite{dekatria}, \cite{tessera}, \cite{okto}.

Consider the space $\mathbb{C}^{\mathbb{N}}$ endowed with the Cartesian topology. A well - known result is that $\mathbb{C}^{\mathbb{N}}$ is a metrizable topological space; the same topology on $\mathbb{C}^{\mathbb{N}}$ can be induced by the following metric:
$$ \text{For every} \; a, b \in \mathbb{C}^{\mathbb{N}} \; \text{with} \; a \equiv (a_n)_{n \geq 0} \; \text{and} \; b \equiv (b_n)_{n \geq 0}  \; \text{we define}: $$
$$ \rho_c(a, b) = \sum_{n = 0}^{+ \infty} \frac{1}{2^n} \frac{|a_n - b_n|}{1 + |a_n - b_n|} $$

We know that $(\mathbb{C}^{\mathbb{N}}, \rho_c)$ is a complete metric space.

Another metric that can be introduced on $\mathbb{C}^{\mathbb{N}}$ giving a different topology from the Cartesian one is the following:
$$ \text{For every} \; a, b \in \mathbb{C}^{\mathbb{N}} \; \text{with} \; a \equiv (a_n)_{n \geq 0} \; \text{and} \; b \equiv (b_n)_{n \geq 0}  \; \text{we define}: $$
$$ \rho_d(a, b) = 
\begin{cases} 
2^{- n_0} &\mbox{if} \; a \neq b \; (\text{where} \; n_0 = min \{ n \in \mathbb{N} : a_n \neq b_n \}) \\ 
0 & \mbox{if} \; a = b 
\end{cases} $$

It is also true that $(\mathbb{C}^{\mathbb{N}}, \rho_d)$ is a complete metric space. Moreover, one can see that $\rho_c \leq 2 \rho_d$.

We present now the main theorem of this section.

\begin{theorem} \label{theorem 3.1} 

Let $F \subseteq \mathbb{N} \times \mathbb{N}$ be a non empty set containing exactly a sequence $(p_n, q_n)_{n \geq 0}$ such that $p_n \to + \infty$. Then there exists an element $a \equiv (a_n)_{n \geq 0} \in \mathbb{C}^{\mathbb{N}}$ such that the formal power series $f(z) = \sum_{n = 0}^{+ \infty} a_nz^n$ satisfies the following:

For every compact set $K \subseteq \mathbb{C} \setminus \{ 0 \}$ with connected complement and for every function $\psi \in A(K)$ there exists a subsequence $(p_{k_n}, q_{k_n})_{n \geq 0}$ of the sequence $(p_n, q_n)_{n \geq 0}$ such that:

\begin{itemize}

\item[(1)]
$f \in D_{p_{k_n}, q_{k_n}}(0)$ for every $n \in \mathbb{N}$

\item[(2)]
$\sup_{z \in K} |[f; p_{k_n} / q_{k_n}]_{0}(z) - \psi(z)| \to 0$ as $n \to + \infty$

\item[(3)]
$\sup_{z \in K} |S_{p_{k_n}}(f)(z) - \psi(z)| \to 0$ as $n \to + \infty$

\end{itemize}

Moreover, the set of all elements $a \equiv (a_n)_{n \geq 0} \in \mathbb{C}^{\mathbb{N}}$ satisfying $(1) - (5)$ is dense and $G_{\delta}$ in both spaces $(\mathbb{C}^{\mathbb{N}}, \rho_d)$ and $(\mathbb{C}^{\mathbb{N}}, \rho_c)$.

\end{theorem}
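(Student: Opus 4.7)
The plan is to apply Baire's theorem to a countable family of open dense sets in both complete metric spaces $(\mathbb{C}^{\mathbb{N}}, \rho_c)$ and $(\mathbb{C}^{\mathbb{N}}, \rho_d)$. First I fix a countable enumeration $\{K_m\}_{m \geq 1}$ of compact subsets of $\mathbb{C} \setminus \{0\}$ with connected complement (e.g., finite unions of closed disks with rational centers and radii, chosen so that $0$ is excluded and the complement is connected; such a family is standard and exhausting in the Hausdorff-approximation sense) and a countable enumeration $\{P_j\}_{j \geq 1}$ of polynomials with coefficients in $\mathbb{Q} + i\mathbb{Q}$. For positive integers $m, j, s$ and $n \geq 0$ I set
$$
E(m,j,s,n) = \{ a \in \mathbb{C}^{\mathbb{N}} : f \in D_{p_n,q_n}(0),\ \|[f; p_n/q_n]_0 - P_j\|_{K_m} < 1/s,\ \|S_{p_n}(f) - P_j\|_{K_m} < 1/s \},
$$
where $f(z) = \sum_{k \geq 0} a_k z^k$, and let $U(m,j,s) = \bigcup_{n \geq 0} E(m,j,s,n)$ and $\mathcal{U} = \bigcap_{m,j,s \geq 1} U(m,j,s)$. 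The candidate universal set is $\mathcal{U}$.

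Openness of each $E(m,j,s,n)$ in the Cartesian topology (hence also in the $\rho_d$-topology, since $\rho_c \leq 2\rho_d$ makes the latter finer) is immediate from Remark 2.3: $D_{p_n,q_n}(f,0)$ is a polynomial in the finite tuple $(a_0, \ldots, a_{p_n+q_n})$, so its nonvanishing is an open condition, and on that open set the Jacobi formulas express $[f; p_n/q_n]_0(z)$ as a rational function of the coefficients which is jointly continuous in $z$ and these coefficients, whence $\sup_{K_m}|[f; p_n/q_n]_0 - P_j|$ is continuous. The analogous statement for $S_{p_n}(f)$ is trivial (linearity). Hence each $U(m,j,s)$ is open and $\mathcal{U}$ is $G_\delta$ in both metrics.

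The heart of the proof is density. Fix $a^{(0)} \in \mathbb{C}^{\mathbb{N}}$ and $\epsilon > 0$, and choose $M$ with $2^{-M} < \epsilon$. Using $p_n \to +\infty$, I pick $n$ with $p_n$ so large that Runge's theorem (valid on $K_m$ since its complement is connected, applied to the function $(P_j(z) - \sum_{i=0}^{M} a^{(0)}_i z^i)/z^{M+1}$, which is holomorphic on $\mathbb{C} \setminus \{0\} \supseteq K_m$) yields a polynomial $R$ of degree $\leq p_n - M - 1$ with
$$
\Big\| z^{M+1} R(z) - \Big(P_j(z) - \sum_{i=0}^{M} a^{(0)}_i z^i\Big) \Big\|_{K_m} < \frac{1}{2s}.
$$
I then set
$$
Q(z) = \sum_{i=0}^{M} a^{(0)}_i z^i + z^{M+1} R(z) + \delta z^{p_n},
$$
with $|\delta|$ a tiny nonzero scalar forcing $\deg Q = p_n$ exactly while keeping $\|Q - P_j\|_{K_m} < 1/s$. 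Defining $a = (c_0, c_1, \ldots, c_{p_n}, 0, 0, \ldots)$ to be the coefficient sequence of $Q$, I have $f \equiv Q$, the first $M+1$ coordinates of $a$ match those of $a^{(0)}$ exactly (so $\rho_d(a,a^{(0)}) \leq 2^{-M-1} < \epsilon$ and $\rho_c(a,a^{(0)}) \leq 2^{-M} < \epsilon$), and Proposition 2.4 applied to the reduced representation $Q = Q/1$ (with $p_0 = p_n$, $q_0 = 0$, trivially coprime, $B(0) = 1$) places $f$ in $D_{p_n,q_n}(0)$ via case (3) and identifies $[f; p_n/q_n]_0 = Q$. Since also $S_{p_n}(f) = Q$, both approximations lie within $1/s$ of $P_j$ on $K_m$, and $a \in U(m,j,s)$.

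Baire's theorem then produces $\mathcal{U}$ as a dense $G_\delta$ in both complete spaces. For $a \in \mathcal{U}$ and an arbitrary compact $K \subseteq \mathbb{C} \setminus \{0\}$ with connected complement and $\psi \in A(K)$, Mergelyan's theorem approximates $\psi$ by some $P_j$ on $K$, a standard exhaustion embeds $K$ inside some $K_m$, and a diagonal extraction as $s \to +\infty$ extracts a subsequence $(p_{k_n}, q_{k_n})$ satisfying (1), (2) and (3) simultaneously. The main obstacle --- synchronizing Padé and Taylor universalities at a single index $n$ while preserving the first $M+1$ Taylor coordinates \emph{exactly} (as $\rho_d$-density demands) --- is dissolved by the key observation from Proposition 2.4: a polynomial of degree exactly $p_n$ is simultaneously its own $p_n$-th Taylor partial sum and its own $(p_n, q_n)$-Padé approximant for every $q_n \geq 0$, so a single choice of $f = Q$ resolves both approximation requirements with the same index.
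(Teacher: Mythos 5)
Your proposal is correct and follows essentially the same route as the paper: Baire's theorem applied to sets defined by the Hankel-determinant condition plus $\frac{1}{s}$-closeness of the Pad\'{e} approximant and the partial sum to a rational-coefficient polynomial on a member of a compact exhaustion, with openness via continuity of the Jacobi formulas and density via the key device of a polynomial of degree exactly $p_n$ (forced by the small perturbation $\delta z^{p_n}$) being simultaneously its own $p_n$-th partial sum and its own $(p_n,q_n)$-Pad\'{e} approximant, while the first $M+1$ coefficients are kept fixed to achieve $\rho_d$-density. The only differences are cosmetic (you merge the paper's sets $F(p,q,j,s)$ and $E(p,j,s)$ into one, and intersect over the $K_m$ from the outset rather than at the end).
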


\begin{proof} Let $K \subseteq \mathbb{C} \setminus \{ 0 \}$ be a fixed compact set with connected complement and $\{ f_j \}_{j \geq 1}$ an enumeration of polynomials with coefficients in $\mathbb{Q} + i\mathbb{Q}$. Also, let $\psi \in A(K)$. We know from Mergelyan's theorem) that the polynomials $\{ f_j \}_{j \geq 1}$ is a dense subset of $A(K)$.

Now, for every $(p, q) \in F$ and for every $j, s \geq 1$ we consider the following sets:
$$ F(p, q, j, s) = \{ a \equiv (a_0, a_1, \cdots ) \in \mathbb{C}^{\mathbb{N}} : \; \text{the formal power series} $$
$$ f(z) = \sum_{n = 0}^{+ \infty} a_n z^n \; \text{satisfies} \; f \in D_{p, q}(0) \; \text{and} \; \sup_{z \in K} |[f; p / q]_{0}(z) - f_{j}(z)| < \frac{1}{s} \} $$

$$ E(p, j, s) = \{ a \equiv (a_0, a_1, \cdots ) \in \mathbb{C}^{\mathbb{N}} : \; \text{the formal power series} $$ 
$$ f(z) = \sum_{n = 0}^{+ \infty} a_n z^n \; \text{satisfies} \; \sup_{z \in K} |S_p(f)(z) - f_{j}(z)| < \frac{1}{s} \} $$

If $\mathcal{U}(K)$ is the set of all elements $a \equiv (a_n)_{n \geq 0} \in \mathbb{C}^{\mathbb{N}}$ satisfying $(1) - (3)$ for the specific compact set $K$, one can verify (by using Mergelyan's theorem) that:
$$ \mathcal{U}(K) = \bigcap_{j, s \geq 1} \bigcup_{(p, q) \in F} F(p, q, j, s) \cap E(p, j, s) $$

So, according to Baire's theorem, it is enough to prove the following:

\begin{claim} \label{claim 3.2} 

The sets $F(p, q, j, s)$ and $E(p, j, s)$ are open subsets of $(\mathbb{C}^{\mathbb{N}}, \rho_c)$ for every $(p, q) \in F$ and for every $j, s \geq 1$. This triavially implies that these sets are open in $(\mathbb{C}^{\mathbb{N}}, \rho_d)$ as well, since $\rho_c \leq 2 \rho_d$.

\end{claim}

\begin{claim} \label{claim 3.3} 

The set $\mathcal{U}(j, s) = \bigcup_{(p, q) \in F} F(p, q, j, s) \cap E(p, j, s)$ is a dense subset of $(\mathbb{C}^{\mathbb{N}}, \rho_d)$ for every $j, s \geq 1$. This trivially implies that $\mathcal{U}(j, s)$ is also dense in $(\mathbb{C}^{\mathbb{N}}, \rho_c)$.

\end{claim}

\subsection{The case of \boldmath{$F(p, q, j, s)$.}} %3

We assume that $q \geq 1$ because for $q = 0$ the sets $F(p, q, j, s)$ and $E(p, j, s)$ coincide and the set $E(p, j, s)$ will be proven to be open.

Let $a \equiv (a_n)_{n \geq 0} \in \mathbb{C}^{\mathbb{N}}$ and $\varepsilon > 0$ be small enough. The number $\varepsilon > 0$ will be determined later on. Also, let $b \equiv (b_n)_{n \geq 0} \in \mathbb{C}^{\mathbb{N}}$ satisfying:
$$ \rho_{c}(a, b) < \varepsilon \;\;\; (3.1.1) $$

Consider now the formal power series $f(z) = \sum_{n = 0}^{+ \infty} a_nz^n$ and $g(z) = \sum_{n = 0}^{+ \infty} b_nz^n$. If $\varepsilon > 0$ is small enough, the first $p + q + 1$ coefficients of the formal power series of $f$ are close enough one by one to those of $g$. We know that $f \in D_{p, q}(0)$ (because $a \in F(p, q, j, s)$), so the Hankel determinant $D_{p, q}(f, 0)$ of $f$ is not equal to $0$. This determinant depends continuously on the first $p + q + 1$ coefficients of the formal power series of $f$ which are close enough one by one to those of $g$. Thus, the Hankel determinant $D_{p, q}(g, 0)$ of $g$ is also not equal to $0$ and so $g \in D_{p, q}(0)$.

We recall the Pad\'{e} approximants: 
$$ [f; p / q]_{0}(z) = \frac{A(f)(z)}{B(f)(z)} $$ 

and
$$ [g; p / q]_{0}(z) = \frac{A(g)(z)}{B(g)(z)} $$

\sloppy{where the polynomials $A(f)(z), B(f)(z), A(g)(z)$ and $B(g)(z)$ are given by the Jacobi formulas}. Since $a \in F(p, q, j, s)$ we have that $\sup_{z \in K} |[f; p / q]_{0}(z) - f_{j}(z)| < \frac{1}{s}$, while $f_j(z) \in \mathbb{C}$ for every $z \in K$; thus $[f; p / q]_{0}(z) \in \mathbb{C}$ for every $z \in K$, or equivalently, $B(f)(z) \neq 0$ for every $z \in K$ because the polynomials $A(f)(\cdot)$ and $B(f)(\cdot)$ do not have any common zeros in $\mathbb{C}$, provided that $f \in D_{p, q}(0)$.

By continuity, one obtains that there exists a $\delta >0$ such that:
$$|B(f)(z)| \geq \delta \; \text{for every} \; z \in K \;\;\; (3.1.2)$$ 
 
The polynomials $A(f)(z)$ and $B(f)(z)$ depend continuously on the first $p + q + 1$ coefficients of the formal power series of $f$, which are close enough one by one to those of $g$. Thus for $\varepsilon >0$ small enough, one obtains:
$$ |B(g)(z)| \geq \frac{\delta}{2} \; \text{for every} \; z \in K \;\;\; (3.1.3)$$ 

By the triangle inequality for the formal power series of $g$, we have:
$$ \sup_{z \in K} |[g; p / q]_{0}(z) - f_{j}(z)| \leq $$ 
$$ \sup_{z \in K} |[g; p / q]_{0}(z) - [f; p / q]_{0}(z)| + $$
$$ + \sup_{z \in K} |[f; p / q]_{0}(z) - f_{j}(z)| \;\;\; (3.1.4) $$

The term $\sup_{z \in K} |[f; p / q]_{0}(z) - f_{j}(z)|$ is strictly less than $\frac{1}{s}$, since $a \in F(p, q, j, s)$. So, it remains to control the term $\sup_{z \in K} |[g; p / q]_{0}(z) - [f; p / q]_{0}(z)|$.

By combining relations $(3.1.2)$ and $(3.1.3)$ we have that for every $z \in K$ it is:
$$ |[g; p / q]_{0}(z) - [f; p / q]_{0}(z)| = $$
$$ |\frac{A(f)(z)}{B(f)(z)} - \frac{A(g)(z)}{B(g)(z)}| = |\frac{A(f)(z)B(g)(z) - A(g)(z)B(f)(z)}{B(f)(z)B(g)(z)}| \leq $$
$$ \leq \frac{2}{(\delta)^2}|A(f)(z)B(g)(z) - A(g)(z)B(f)(z)| \leq $$
$$ \leq \frac{2}{(\delta)^2}|A(f)(z)||B(f)(z) - B(g)(z)| + $$
$$ + \frac{2}{(\delta)^2}|B(f)(z)||A(f)(z) - A(g)(z)| \;\;\; (3.1.5) $$

It is now clear that the right part of inequality $(3.1.5)$ can be arbitrary small, provided that $\varepsilon > 0$ is small enough; especially it can be strictly less than $\frac{1}{s} - \sup_{z \in K} |[f; p / q]_{0}(z) - f_{j}(z)|$. The result follows from the triangle inequality.

\subsection{The case of \boldmath{$E(p, j, s)$.}} %3

Let $a \equiv (a_n)_{n \geq 0} \in \mathbb{C}^{\mathbb{N}}$ and $\varepsilon > 0$ be small enough. The number $\varepsilon > 0$ will be determined later on. Also, let $b \equiv (b_n)_{n \geq 0} \in \mathbb{C}^{\mathbb{N}}$ satisfying:
$$ \rho_{c}(a, b) < \varepsilon \;\;\; (3.2.1) $$

Consider now the formal power series $f(z) = \sum_{n = 0}^{+ \infty} a_nz^n$ and $g(z) = \sum_{n = 0}^{+ \infty} b_nz^n$. If $\varepsilon > 0$ is small enough, the first $p + 1$ coefficients of the formal power series of $f$ are close enough one by one to those of $g$.

By the triangle inequality, for the formal power series of $g$, we have:
$$ \sup_{z \in K} |S_p(g)(z) - f_j(z)| \leq $$
$$ \sup_{z \in K} |S_p(g)(z) - S_p(f)(z)| + $$
$$ + \sup_{z \in K} |S_p(f)(z) - f_j(z)| $$

The term $\sup_{z \in K} |S_p(f)(z) - f_j(z)|$ is strictly less than $\frac{1}{s}$,  since $a \in E(p, j, s)$. So, it remains to control the term $\sup_{z \in K} |S_p(g)(z) - S_p(f)(z)|$. 

It is obvious that the term $\sup_{z \in K} |S_p(g)(z) - S_p(f)(z)|$ can become arbitrary small (provided that $\varepsilon > 0$ is small enough) and so, it suffices to demand $\sup_{z \in K} |S_p(g)(z) - S_p(f)(z)| < \frac{1}{s} - \sup_{z \in K} |S_p(f)(z) - f_j(z)|$ and then the result follows from the triangle inequality. This completes the proof of Claim \ref{claim 3.2}.

\subsection{Density of \boldmath{$\mathcal{U}(j, s)$.}} %3

In order to prove Claim \ref{claim 3.3}, we fix the parameters $j, s \geq 1$ and we want to prove that the set: 
$$ \mathcal{U}(j, s) = \bigcup_{(p, q) \in F} F(p, q, j, s) \cap E(p, j, s) $$ 

is dense in $(\mathbb{C}^{\mathbb{N}}, \rho_d)$.

Let $a \equiv (a_0, a_1, \cdots) \in \mathbb{C}^{\mathbb{N}}$ and $\varepsilon > 0$ be small enough. Let also $b \equiv (b_0, b_1, \cdots) \in \mathbb{C}^{\mathbb{N}}$ such that $\rho_d(a, b) < \varepsilon$. Consider the formal power series $f(z) = \sum_{n = 0}^{+ \infty} a_n z^n$ and $g(z) = \sum_{n = 0}^{+ \infty} b_n z^n$. We select an index $n_0 \in \mathbb{N}$ such that:
$$ \frac{1}{2^{n_0}} < \varepsilon $$

and we consider the polynomial $p(z) = b_{n_0} z^{n_0} + \cdots + b_1 z + b_0$. Since $f_j(z)$ is a polynomial and $0 \notin K$, the function:
$$ \frac{f_j(z) - p(z)}{z^{n_0 + 1}} $$

belongs to $A(K)$. By Mergelyan's theorem, since $K$ has connected complement, there exists a polynomial $t(z)$ such that:
$$ \sup_{z \in K} |\frac{f_j(z) - p(z)}{z^{n_0 + 1}} - t(z)| = \sup_{z \in K} |\frac{f_j(z) - p(z) - t(z)z^{n_0 + 1}}{z^{n_0 + 1}}| < \frac{1}{s \cdot \sup_{z \in K} |z^{n_0 + 1}|} $$

The previous relation implies that: 
$$ \sup_{z \in K} |f_j(z) - p(z) - t(z)z^{n_0 + 1}| < \frac{1}{s} $$

since for every $z \in K$ it is:
$$ |f_j(z) - p(z) - t(z)z^{n_0 + 1}| = \frac{|f_j(z) - p(z) - t(z)z^{n_0 + 1}||z^{n_0 + 1}|}{|z^{n_0 + 1}|} \leq $$
$$ \leq \frac{|f_j(z) - p(z) - t(z)z^{n_0 + 1}|}{|z^{n_0 + 1}|} \sup_{z \in K} |z^{n_0 + 1}| < \frac{1}{s} $$

Since $p_n \to + \infty$, there exists an element $(p_{k_{n_0}}, q_{k_{n_0}}) \in F$ satisfying:
$$ p_{k_{n_0}} > max(deg (t(z)z^{n_0 + 1}), n_0) $$

Consider now the following polynomial:
$$ h(z) = p(z) + t(z)z^{n_0 + 1} + dz^{p_{k_{n_0}}} = \sum_{n = 0}^{+ \infty} c_n z^n $$
$$ \text{where} \; c \equiv (c_0, c_1, \cdots) \in \mathbb{C}^{\mathbb{N}} $$ 

with $d \in \mathbb{C} \setminus \{ 0 \}$ and $|d|$ sufficiently small. It is clear that $deg(t(z)z^{n_0 + 1} + dz^{p_{k_{n_0}}}) = p_{k_{n_0}}$, so $deg(h(z)) = p_{k_{n_0}}$. Moreover, $h \in D_{p_{k_{n_0}}, q_{k_{n_0}}}(0)$ and also $h(z) = [h; p_{k_{n_0}} / q_{k_{n_0}}]_{0}(z)$. In addition:
$$ \sup_{z \in K} |f_j(z) - h(z)| = \sup_{z \in K} |f_j(z) - (p(z) + t(z)z^{n_0 + 1} + dz^{p_{k_{n_0}}})| = $$ 
$$ = \sup_{z \in K} |f_j(z) - p(z) - t(z)z^{n_0 + 1} - dz^{p_{k_{n_0}}}| \leq  $$
$$ \leq \sup_{z \in K} |f_j(z) - p(z) - t(z)z^{n_0 + 1}| + |d|\sup_{z \in K} |z^{p_{k_{n_0}}}| $$

We notice that the right part of the last inequality can become strictly less than $\frac{1}{s}$, provided that $|d| > 0$ is small enough.

By the definition of $h(z)$ we have that $c_n = b_n$ for every $n = 0, \cdots, n_0$. This implies that:
$$ \rho_d(b, c) < \frac{1}{2^{n_0}} < \varepsilon $$ 

By the triangle inequality, we obtain:
$$ \rho_d(a, c) \leq \rho_d(a, b) + \rho_d(b, c) < \varepsilon + \varepsilon = 2 \varepsilon $$

It remains to show that $c \in \mathcal{U}(j, s)$; this is almost obvious:

\begin{itemize}

\item[(1)]
$c \in F(p_{k_{n_0}}, q_{k_{n_0}}, j, s)$ since $h \in D_{p_{k_{n_0}}, q_{k_{n_0}}}(0)$ and the quantity: 
$$ \sup_{z \in K} |[h; p_{k_{n_0}} / q_{k_{n_0}}]_{0}(z) - f_j(z)| = \sup_{z \in K} |h(z) - f_j(z)| \leq $$
$$ \leq \sup_{z \in K} |f_j(z) - p(z) - t(z)z^{n_0 + 1}| + |d|\sup_{z \in K} |z^{p_{k_{n_0}}}| $$

can become strictly less than $\frac{1}{s}$, provided that $|d| > 0$ is small enough.

\item[(2)]
$ c \in E(p_{k_{n_0}}, j, s)$ since $S_{k_{n_0}}(h)(z) = h(z)$ for every $z \in \mathbb{C}$ and so the quantity: 
$$ \sup_{z \in K} |S_{k_{n_0}}(h)(z) - t_j(z)| $$

is exactly the same as in $(1)$.

\end{itemize}

So, according to Baire's theorem the set $\mathcal{U}(j, s)$ is a dense subset of $(\mathbb{C}^{\mathbb{N}}, \rho_d)$ and that holds for every $j, s \geq 1$.

In order to complete the proof we fix a sequence of compacts subsets $\{ K_n \}_{n \geq 1}$ of $\mathbb{C} \setminus \{ 0 \}$ with connected complements such that for every compact set $K \subseteq \mathbb{C} \setminus \{ 0 \}$ with connected complement, there exists an index $n \in \mathbb{N}$ satisfying $K \subseteq K_n$ (\cite{tessera}, \cite{dekatessera}). If $\mathcal{U}$ is the set of all formal power series satisfying the theorem, then we have:
$$ \mathcal{U} = \bigcap_{n = 1}^{+ \infty} \mathcal{U}(K_n) $$

Hence, by Baire's theorem we obtain that $\mathcal{U}$ is dense and $G_{\delta}$ subset of $(\mathbb{C}^{\mathbb{N}}, \rho_d)$.

\end{proof}

\section{Universality valid also on the boundary} %4

The first paper where universal approximation was obtained to hold also on the boundary is \cite{ena}. In the present section we extend this for simultaneous Pad\'{e} - Taylor universal approximation. We recall the following well known lemmas.

\begin{lemma} \label{lemma 4.1}

(\cite{tria}) Let $\Omega$ be a domain in $\mathbb{C}$. Then there exists a sequence $\{ K_m \}_{m \geq 1}$ of compact subsets of $\mathbb{C} \setminus \Omega$ with connected complements, such that for every compact set $K \subseteq \mathbb{C} \setminus \Omega$ with connected complement, there exists an index $m \in \mathbb{N}$ satisfying $K \subseteq K_m$.

\end{lemma}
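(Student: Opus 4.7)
I will work on the Riemann sphere $\widehat{\mathbb{C}} = \mathbb{C}\cup\{\infty\}$ and use the following bijective correspondence between admissible compact sets and open neighborhoods: if $K \subseteq \mathbb{C}\setminus\Omega$ is compact with $\mathbb{C}\setminus K$ connected, then, since $K$ is bounded, $V := \widehat{\mathbb{C}}\setminus K$ is a connected open subset of $\widehat{\mathbb{C}}$ containing $\Omega\cup\{\infty\}$; conversely, every such $V$ gives rise to an admissible $K = \widehat{\mathbb{C}}\setminus V$ (the complement $\mathbb{C}\setminus K = V\setminus\{\infty\}$ remains connected because removing a single point from a connected open subset of a $2$-manifold preserves connectedness). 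The lemma therefore reduces to finding a countable family $\{V_m\}_{m\geq 1}$ of connected open neighborhoods of $\Omega\cup\{\infty\}$ in $\widehat{\mathbb{C}}$ such that every other such neighborhood contains some $V_m$; setting $K_m := \widehat{\mathbb{C}}\setminus V_m$ then yields the required exhaustion.

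To construct the $V_m$, I would fix a countable collection $\{\gamma_j\}_{j\geq 1}$ of polygonal arcs in $\widehat{\mathbb{C}}$ whose vertices lie in $(\mathbb{Q}+i\mathbb{Q})\cup\{\infty\}$, with each $\gamma_j$ starting at some rational point of $\Omega$ and terminating at $\infty$. For each $j$ and each positive integer $k$, set
$$V_{j,k} \;=\; \Omega \,\cup\, N_{1/k}(\gamma_j),$$
where $N_{1/k}(\gamma_j)$ denotes the open $1/k$-spherical neighborhood of the image of $\gamma_j$ in $\widehat{\mathbb{C}}$. Each $V_{j,k}$ is open and contains $\Omega\cup\{\infty\}$; it is connected because $\Omega$ and $N_{1/k}(\gamma_j)$ are each connected and share the point $\gamma_j(0)\in\Omega$. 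Enumerate this countable family as $\{V_m\}_{m\geq 1}$ and set $K_m := \widehat{\mathbb{C}}\setminus V_m$; by the correspondence above, each $K_m$ is already of the required type.

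The remaining task is the cofinality property. Given a connected open $V\subseteq\widehat{\mathbb{C}}$ containing $\Omega\cup\{\infty\}$, path-connectedness of $V$ (standard for connected open subsets of manifolds) provides a path $\gamma\subseteq V$ from some point of $\Omega$ to $\infty$. The image of $\gamma$ is compact in the open set $V$, so some spherical $\varepsilon$-neighborhood of this image lies in $V$ for a suitable $\varepsilon>0$. Choosing a polygonal arc $\gamma_j$ from the countable list so that $\gamma_j$ lies within the $\varepsilon/2$-neighborhood of $\gamma$ and starts at a rational point of $\Omega$, and taking $k$ with $1/k<\varepsilon/2$, yields $V_{j,k}\subseteq V$, i.e.\ $K\subseteq K_m$ for the corresponding index $m$. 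The main obstacle is precisely this approximation step: one needs to simultaneously satisfy all of the constraints (rational vertices, endpoints in $\Omega$ and at $\infty$, uniform closeness to $\gamma$) while preserving the inclusion into $V$. This is routine but relies on density of $\mathbb{Q}+i\mathbb{Q}$ in $\mathbb{C}$, local path-connectedness of $\widehat{\mathbb{C}}$, and a compactness argument to convert pointwise closeness into uniform closeness in the spherical metric.
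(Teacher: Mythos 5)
The paper does not prove Lemma \ref{lemma 4.1} at all; it is quoted from the literature (the reference \cite{tria}), so there is no in-paper argument to compare against. Your proof is correct and is, in substance, the standard one: the classical construction takes $K_{n,j}=\overline{D(0,n)}\setminus(\Omega\cup\Delta_j)$ where $\Delta_j$ runs over a countable family of open ``channels'' joining $\Omega$ to $\{|z|>n\}$, which is exactly your $\widehat{\mathbb{C}}\setminus\bigl(\Omega\cup N_{1/k}(\gamma_j)\bigr)$ phrased on the sphere; the passage to $\widehat{\mathbb{C}}$ neatly packages the ``large disk'' and the neighborhood of $\infty$ into one $\varepsilon$-argument. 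The only point to tighten is the countability of your list of arcs: a ``polygonal arc with a vertex at $\infty$'' needs a concrete meaning (e.g.\ a finite rational polygonal path followed by a ray in a rational direction chosen outward, so that the tail stays spherically close to $\infty$), but this is exactly the routine discretization you already flag, and it does not affect the validity of the argument.
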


\begin{lemma} \label{lemma 4.2}

(Existence of exhausting family; \cite{dekaennia}) Let $\Omega$ be an open set in $\mathbb{C}$. Then there exists a sequence $\{ L_k \}_{k \geq 1}$ of compact subsets of $\Omega$ such that:

\begin{itemize}

\item[(1)] 
$L_k \subseteq L_{k + 1}^o$ for every $k \in \mathbb{N}$.

\item[(2)]
For every compact $L \subseteq \Omega$ there exists a $k \in \mathbb{N}$ such that $L \subseteq L_k$.

\item[(3)]
Every connected component of $\tilde{\mathbb{C}} \setminus L_k$ contains at least one connected component of $\tilde{\mathbb{C}} \setminus \Omega$.
\end{itemize}
 
\end{lemma}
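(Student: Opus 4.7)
The plan is to build the exhaustion in two stages: first a naive exhaustion $\tilde{L}_k$ satisfying (1) and (2) but possibly violating (3), then its enlargement $L_k$ obtained by filling in the bounded components of $\mathbb{C} \setminus \tilde{L}_k$ that are entirely contained in $\Omega$, which repairs (3) while preserving the other two properties.

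For the first stage I would set
$$ \tilde{L}_k = \{ z \in \mathbb{C} : |z| \leq k \text{ and } d(z, \mathbb{C} \setminus \Omega) \geq 1/k \}, $$
with the usual convention when $\Omega = \mathbb{C}$. Each $\tilde{L}_k$ is closed and bounded, hence compact, and is contained in $\Omega$ because points of $\partial \Omega$ have zero distance to $\mathbb{C} \setminus \Omega$. The inclusion $\tilde{L}_k \subseteq \tilde{L}_{k+1}^o$ follows from the strict inequalities $k < k+1$ and $1/(k+1) < 1/k$. Property (2) is standard: any compact $L \subseteq \Omega$ is bounded and has strictly positive distance to $\mathbb{C} \setminus \Omega$, so $L \subseteq \tilde{L}_k$ for $k$ large enough.

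Next, I would define $L_k$ to be the union of $\tilde{L}_k$ with every bounded connected component of $\mathbb{C} \setminus \tilde{L}_k$ that is contained in $\Omega$. Every bounded component of $\mathbb{C} \setminus \tilde{L}_k$ lies in the closed disk $|z| \leq k$ (the complement $\mathbb{C} \setminus \tilde{L}_k$ has a single unbounded component covering $|z| > k$), so $L_k$ is still contained in that disk; it is also closed, so it is compact, and it is contained in $\Omega$ by construction. Property (3) then follows by a short case analysis: any component $V$ of $\tilde{\mathbb{C}} \setminus L_k$ either contains $\infty$ (and hence the $\infty$-component of $\tilde{\mathbb{C}} \setminus \Omega$), or is a bounded component of $\mathbb{C} \setminus L_k$ that must meet $\mathbb{C} \setminus \Omega$ at some point $w$, in which case the connected component of $\tilde{\mathbb{C}} \setminus \Omega$ through $w$ is connected and disjoint from $L_k$, hence contained in $V$.

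The main thing to verify carefully, and where I expect the only real subtlety to lie, is that the filling step preserves the strict nesting (1). Let $z \in L_k$. If $z \in \tilde{L}_k$ then $z \in \tilde{L}_{k+1}^o \subseteq L_{k+1}^o$. Otherwise $z$ lies in an adjoined open component $V$ of $\mathbb{C} \setminus \tilde{L}_k$ with $V \subseteq \Omega$; here I would observe that every bounded component of $\mathbb{C} \setminus \tilde{L}_{k+1}$ meeting $V$ is itself contained in $V \subseteq \Omega$, hence is adjoined to $L_{k+1}$. This shows $V \subseteq L_{k+1}$, and since $V$ is open we get $z \in V \subseteq L_{k+1}^o$. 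Once this check is in place, the rest of the verification is bookkeeping and the lemma follows.
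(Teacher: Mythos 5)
Your construction is correct, and it is essentially the classical one from Rudin's \emph{Real and Complex Analysis}, which is exactly what the paper cites for this lemma (the paper gives no proof of its own beyond the remark that one may take such an exhaustion). The one substantive comment is that your second stage --- adjoining the bounded components of $\mathbb{C} \setminus \tilde{L}_k$ that lie in $\Omega$ --- is in fact vacuous for this particular choice of $\tilde{L}_k$, so property (3) already holds before any filling. Indeed, $\mathbb{C} \setminus \tilde{L}_k = \{ |z| > k \} \cup \bigcup_{a \in \mathbb{C} \setminus \Omega} D(a, 1/k)$ is a union of connected open pieces; a bounded component $V$ cannot meet the connected unbounded piece $\{ |z| > k \}$ (else it would contain it), so $V$ is a union of disks $D(a, 1/k)$ with $a \notin \Omega$ and therefore contains such a point $a$. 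Hence no bounded component of $\mathbb{C} \setminus \tilde{L}_k$ is contained in $\Omega$, $L_k = \tilde{L}_k$, and (3) follows directly: every component of $\tilde{\mathbb{C}} \setminus \tilde{L}_k$ contains either some $a \in \mathbb{C} \setminus \Omega$ or the point $\infty$, and hence contains the full component of $\tilde{\mathbb{C}} \setminus \Omega$ through that point. Your filling step and the verification that it preserves the nesting $L_k \subseteq L_{k+1}^o$ are both logically sound (and would be genuinely needed if one wanted the stronger conclusion that $\tilde{\mathbb{C}} \setminus L_k$ has no "spurious" components at all, e.g.\ for a simply connected exhaustion of a simply connected $\Omega$), but here they only add length. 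Either way, the lemma is established.
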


We present now the main theorem of this section.

\begin{theorem} \label{theorem 4.3}

Let $\Omega \subseteq \mathbb{C}$ a simply connected domain and $L \subseteq \Omega$ be a compact set. Also, let $F \subseteq \mathbb{N} \times \mathbb{N}$ be a non empty set containing exactly a sequence $(p_n, q_n)_{n \geq 1}$ such that $p_n \to + \infty$. Then, there exists a function $f \in H(\Omega)$ satisfying the following:
\\
\indent
For every compact set $K \subseteq \mathbb{C} \setminus \Omega$ with connected complement and for every function $h \in A(K)$, there exists a subsequence $(p_{k_n}, q_{k_n})_{n \geq 1}$ of the sequence $(p_n, q_n)_{n \geq 1}$ such that:
\begin{itemize}

\item[(1)]
$f \in D_{p_{k_n}, q_{k_n}}(\zeta)$ for every $n \in \mathbb{N}$ and for every $\zeta \in L$

\item[(2)]
$\sup_{\zeta \in L} \sup_{z \in K} |S_{p_{k_n}}(f, \zeta)(z) - h(z)| \to 0$ as $n \to + \infty$

\item[(3)]
$\sup_{\zeta \in L} \sup_{z \in K} |[f; p_{k_n} / q_{k_n}]_{\zeta}(z) - h(z)| \to 0$ as $n \to + \infty$

\item[(4)]
For every compact set $J \subseteq \Omega$ holds:
$$ \sup_{\zeta \in L} \sup_{z \in J} |S_{p_{k_n}}(f, \zeta)(z) - f(z)| \to 0 \; \text{as} \; n \to + \infty $$

\item[(5)]
For every compact set $J \subseteq \Omega$ it holds:
$$ \sup_{\zeta \in L} \sup_{z \in J} |[f; p_{k_n} / q_{k_n}]_{\zeta}(z) - f(z)| \to 0 \; \text{as} \; n \to + \infty $$

\end{itemize}  

Moreover, the set of all functions satisfying $(1) - (5)$ is dense and $G_\delta$ in $H(\Omega)$.

\end{theorem}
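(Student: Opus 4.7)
The plan is to transplant the Baire category scheme of Theorem~\ref{theorem 3.1} to the Fr\'echet space $H(\Omega)$, strengthened so as to carry uniformity in the center $\zeta\in L$, and to exploit Proposition~\ref{proposition 2.4} via the polynomial trick used in Claim~\ref{claim 3.3}. By Lemma~\ref{lemma 4.1} fix an exhausting sequence $\{K_m\}_{m\geq 1}$ of compact subsets of $\mathbb{C}\setminus\Omega$ with connected complements; by Lemma~\ref{lemma 4.2} fix an exhausting sequence $\{J_k\}_{k\geq 1}$ of compact subsets of $\Omega$; and enumerate as $\{h_j\}_{j\geq 1}$ the polynomials with coefficients in $\mathbb{Q}+i\mathbb{Q}$, which by Mergelyan's theorem are dense in $A(K_m)$ for every $m$. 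For $(p,q)\in F$ and integers $m,k,j,s\geq 1$ let $\mathcal{V}(p,q,m,k,j,s)\subseteq H(\Omega)$ consist of those $f$ with $f\in D_{p,q}(\zeta)$ for every $\zeta\in L$ and with each of the four quantities
\[
\sup_{\zeta\in L,\,z\in K_m}|S_p(f,\zeta)(z)-h_j(z)|,\quad \sup_{\zeta\in L,\,z\in K_m}|[f;p/q]_\zeta(z)-h_j(z)|,
\]
\[
\sup_{\zeta\in L,\,z\in J_k}|S_p(f,\zeta)(z)-f(z)|,\quad \sup_{\zeta\in L,\,z\in J_k}|[f;p/q]_\zeta(z)-f(z)|
\]
strictly less than $1/s$. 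A routine verification yields
\[
\mathcal{U}=\bigcap_{m,k,j,s\geq 1}\bigcup_{(p,q)\in F}\mathcal{V}(p,q,m,k,j,s),
\]
so by Baire's theorem it suffices to prove each $\mathcal{V}(p,q,m,k,j,s)$ open and each inner union dense in $H(\Omega)$.

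Openness would follow the continuity argument of Section~3.1, with the additional care that all estimates must be uniform in $\zeta\in L$. Since $(f,\zeta)\mapsto f^{(j)}(\zeta)/j!$ is jointly continuous on $H(\Omega)\times L$, so is the Hankel determinant $D_{p,q}(f,\zeta)$; on any $f\in\mathcal{V}(p,q,m,k,j,s)$ it is bounded away from $0$ uniformly in $\zeta\in L$, by compactness of $L$, and this bound persists under small perturbations of $f$, so $g\in D_{p,q}(\zeta)$ for every $\zeta\in L$ when $g$ is close to $f$. The Jacobi formulas then make $A(g,\zeta)(z)$ and $B(g,\zeta)(z)$ jointly continuous in $(g,\zeta,z)$; combined with the lower bound on $|B(f,\zeta)(z)|$ on the compact products $L\times K_m$ and $L\times J_k$, the estimate~(3.1.5) applied and supremum taken over these products yields openness.

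The principal obstacle is density, because Pad\'e existence has to be secured at \emph{every} $\zeta\in L$ simultaneously; the polynomial trick resolves both this issue and the uniformity-in-$\zeta$ of the approximation. Fix $f_0\in H(\Omega)$, $\varepsilon>0$, and parameters $(m,k,j,s)$, and enlarge $L\cup J_k$ to some $L_{k_0}$ from Lemma~\ref{lemma 4.2}. Since $\Omega$ is simply connected, $\widetilde{\mathbb{C}}\setminus\Omega$ is connected, and property~(3) of Lemma~\ref{lemma 4.2} forces $\widetilde{\mathbb{C}}\setminus L_{k_0}$ to be connected as well; a standard polynomial-convexity argument then shows that $L_{k_0}\cup K_m$ is a compact set with connected complement in $\widetilde{\mathbb{C}}$. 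Applying Mergelyan's theorem to the function $u\in A(L_{k_0}\cup K_m)$ defined by $u=f_0$ on $L_{k_0}$ and $u=h_j$ on $K_m$ yields a polynomial $P$ with $\sup_{L_{k_0}\cup K_m}|P-u|<\delta$ for any prescribed $\delta>0$. Using $p_n\to+\infty$, pick $n_1\geq 1$ with $N:=p_{n_1}>\deg P$ and set $g(z):=P(z)+dz^N$ for some $d\neq 0$ of small modulus.

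Then $g$ is a polynomial of degree exactly $N$, so Proposition~\ref{proposition 2.4}, applied with $p_0=N$ and $q_0=0$, gives $g\in D_{N,q_{n_1}}(\zeta)$ and $[g;N/q_{n_1}]_\zeta(z)\equiv g(z)$ for every $\zeta\in\mathbb{C}$; also $S_N(g,\zeta)(z)\equiv g(z)$ because $\deg g\leq N$. Thus all four Taylor/Pad\'e expressions defining $\mathcal{V}(N,q_{n_1},m,k,j,s)$ collapse to $g-h_j$ on $K_m$ and to $0$ on $J_k$, and the three suprema $\sup_{K_m}|g-h_j|$, $\sup_{J_k}|g-f_0|$, $\sup_{L_{k_0}}|g-f_0|$ are each bounded by $\delta+|d|\sup_{L_{k_0}\cup K_m}|z|^N$, which can be made smaller than both $1/s$ and $\varepsilon$ by shrinking $\delta$ and $|d|$. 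Hence $g$ lies in $\mathcal{V}(N,q_{n_1},m,k,j,s)$ within the prescribed $H(\Omega)$-neighborhood of $f_0$, density is established, and Baire's theorem concludes the proof.
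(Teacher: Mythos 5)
Your proposal is correct and follows essentially the same route as the paper: the same decomposition of the universal class into a countable intersection over $(m,k,j,s)$ of unions over $(p,q)\in F$ of open sets (you merely bundle the paper's four sets $A(K_m,p,j,s)$, $B(K_m,p,q,j,s)$, $C(L_k,p,s)$, $D(L_k,p,q,s)$ into a single $\mathcal{V}$), the same Cauchy-estimate/Hankel-determinant/Jacobi-formula continuity argument for openness, and the same Mergelyan-plus-$dz^{N}$ polynomial trick with Proposition~\ref{proposition 2.4} for density. The only detail to add is that $L_{k_0}$ should also be chosen to contain the compact set defining the prescribed neighborhood of $f_0$ (the paper's $L_1$), which is immediate from Lemma~\ref{lemma 4.2}.
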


\begin{proof} Let us first consider an enumeration $\{ f_j \}_{j \geq 1}$ of polynomials with coefficients in $\mathbb{Q} + i\mathbb{Q}$.

Now, for every $(p, q) \in F$ and for every $j, s, m, k \geq 1$ we consider the following sets:
$$ A(K_m, p, j, s) = \{ f \in H(\Omega) : \sup_{\zeta \in L} \sup_{z \in K_m} |S_p(f,\zeta)(z) - f_j(z)| < \frac{1}{s}  \} $$
$$ B(K_m, p, q, j, s) = \{ f \in H(\Omega) : \; \text{for every} \; \zeta \in L \; \text{it holds} \; f \in D_{p,q}(\zeta) $$
$$ \text{and} \; \sup_{\zeta \in L} \sup_{z \in K_m} |[f; p / q]_{\zeta}(z) - f_j(z)| < \frac{1}{s} \} $$
$$ C(L_k, p, s) = \{ f \in H(\Omega) : \sup_{\zeta \in L} \sup_{z \in L_k} |S_p(f, \zeta)(z) - f(z)| < \frac{1}{s} \} $$
$$ D(L_k, p, q, s) = \{ f \in H(\Omega) : \; \text{for every} \; \zeta \in L \; \text{it holds} \; f \in D_{p,q}(\zeta) $$
$$ \text{and} \; \sup_{\zeta \in L} \sup_{z \in L_k} |[f; p / q]_{\zeta}(z) - f(z)| < \frac{1}{s} \} $$

One can verify (by using Mergelyan's theorem) that the set of all functions satisfying $(1) - (5)$ is precisely the following class:
$$ \mathcal{X} = \bigcap_{j, m, k, s \geq 1} 
\bigcup_{(p, q) \in F} A(K_m, p, j, s) \cap B(K_m, p, q, j, s) \cap C(L_k, p, s) \cap  D(L_k, p, q, s) $$

So, according to Baire's theorem it is enough to prove the following:

\begin{claim} \label{claim 4.4}
\sloppy{The sets $A(K_m, p, j, s)$, $B(K_m, p, q, j, s)$, $C(L_k, p, s)$ and} $D(L_k, p, q, s)$ are open subsets of $H(\Omega)$ for every $(p, q) \in F$ and for every $m, k, j, s \geq 1$.
\end{claim}

\begin{claim} \label{claim 4.5}
The set $\mathcal{X}(m, k, j, s) = \bigcup_{(p, q) \in F} A(K_m, p, j, s) \cap B(K_m, p, q, j, s) \cap C(L_k, p, s) \cap  D(L_k, p, q, s)$ is a dense subset of $H(\Omega)$ for every $m, k, j, s \geq 1$.
\end{claim}

Since $H(\Omega)$ endowed with the usual topology is a complete metric space, Baire's theorem implies that $\mathcal{X}$ is a dense and $G_{\delta}$ subset of $H(\Omega)$.

For $q = 0$ we automatically have that for every function $f \in H(\Omega)$ it holds $f \in D_{p, 0}(\zeta)$ for every $\zeta \in L$ and for every $p \geq 0$; in that case $[f; p / 0]_{\zeta} = S_p(f, \zeta)$. So, we restrict our attention to the case $q \geq 1$.

\subsection{The case of \boldmath{$A(K_m, p, j, s)$.}} %4

We fix the parameters $(p, q) \in F$ and let $m, j, s \geq 1$. Let also $f \in A(K_m, p, j, s)$ and $\rho$ denote the usual metric for $H(\Omega)$. We want to select an $\varepsilon > 0$ such that for every $g \in H(\Omega)$ with $\rho(f, g) < \varepsilon$ it follows that $g \in A(K_m, p, j, s)$. The number $\varepsilon > 0$ will be determined later on.

Since $L$ is a compact set we have that $d \equiv d(L, \mathbb{C} \setminus \Omega) > 0$. Thus, for every $\zeta \in L$ it holds $B(\zeta, \frac{d}{2}) \subseteq \overline{B}(\zeta, \frac{d}{2}) \subseteq \Omega$. By compactness we can find an index $N \in \mathbb{N}$ and $\zeta_1, \cdots, \zeta_N \in L$ such that:
$$ L \subseteq \bigcup_{n = 1}^{N} B(\zeta_n, \frac{d}{2}) \subseteq \bigcup_{n = 1}^{N} \overline{B}(\zeta_n, \frac{d}{2}) \subseteq \Omega $$

One can see that if $\varepsilon > 0$ is small enough, there exists an index $n_0 \in \mathbb{N}$ with $n_0 > p + 1$ such that $\bigcup_{n = 1}^{N} \overline{B}(\zeta_n, \frac{d}{2}) \subseteq L_{n_0}$ while at the same time the quantity $\sup_{\zeta\in L_{n_0}} |f(\zeta) - g(\zeta)|$ is also small (see also Lemma \ref{lemma 4.2}).

By the Cauchy estimates, the first $p + 1$ Taylor coefficients of $g$ with center $\zeta \in L$ are uniformly close one by one to the corresponding Taylor coefficients of $f$. By the triangle inequality, we have:
$$ \sup_{\zeta \in L} \sup_{z \in K_m} |S_p(g, \zeta)(z) - f_j(z)| \leq $$
$$ \leq \sup_{\zeta \in L} \sup_{z \in K_m} |S_p(g, \zeta)(z) - S_p(f, \zeta)(z)| + \sup_{\zeta \in L} \sup_{z \in K_m} |S_p(f, \zeta)(z) - f_j(z)| \;\;\; (4.1.1)$$

Since $\frac{1}{s} - \sup_{\zeta \in L} \sup_{z \in K_m} |S_p(f, \zeta)(z) - f_j(z)| > 0$, we can obtain for $\varepsilon > 0$ small enough the following relation:
$$ \sup_{\zeta \in L} \sup_{z \in K_m} |S_p(g, \zeta)(z) - S_p(f, \zeta)(z)| < \frac{1}{s} - \sup_{\zeta \in L} \sup_{z \in K_m} |S_p(f, \zeta)(z) - f_j(z)| \;\;\; (4.1.2) $$ 

By combining relations $(4.1.1)$ and $(4.1.2)$ we aquire: 
$$ \sup_{\zeta \in L} \sup_{z \in K_m} |S_p(g, \zeta)(z) - f_j(z)| < \frac{1}{s} $$

It follows that $g \in A(K_m, p, j, s)$ and so $A(K_m, p, j, s)$ is open in $H(\Omega)$.

\subsection{The case of \boldmath{$B(K_m, p, q, j, s)$.}} %4

We fix the parameters $(p, q) \in F$ and let $j, s \geq 1$. Let also $f \in B(K_m, p, q, j, s)$. We want to select an $\varepsilon > 0$ such that for every $g \in H(\Omega)$ with $\rho(f, g) < \varepsilon$ it follows that $g \in B(K_m, p, q, j, s)$. The number $\varepsilon > 0$ will be determined later on.

Since $f \in B(K_m, p, q, j, s)$ we have that $f \in D_{p, q}(\zeta)$ for every $\zeta \in L$. Moreover, the Hankel determinant $D_{p, q}(f)(\zeta)$ for $f$ is not equal to $0$ and that holds for every $\zeta \in L$. By continuity, there exists a $\delta > 0$ such that:
$$ |D_{p, q}(f)(\zeta)| > \delta \; \text{for every} \; \zeta \in L $$

Since $\rho(f, g) < \varepsilon$, we suppose that the first $p + q + 1$ Taylor coefficients of $g$ are uniformly close enough one by one to the corresponding Taylor coefficients of $f$, provided that $\varepsilon > 0$ is small enough. Again by continuity, we obtain:
$$ |D_{p, q}(g)(\zeta)| > \frac{\delta}{2} \; \text{for every} \; \zeta \in L $$

The last relation implies that $g \in D_{p, q}(\zeta)$ for every $\zeta \in L$. By the triangle inequality, we have:
$$ \sup_{\zeta \in L} \sup_{z \in K_m} |[g; p / q]_{\zeta}(z) - f_j(z)| \leq $$
$$ \leq \sup_{\zeta \in L} \sup_{z \in K_m} |[g; p / q]_{\zeta}(z) - [f; p / q]_{\zeta}(z)| + \sup_{\zeta \in L} \sup_{z \in K_m} |[f; p / q]_{\zeta}(z) - f_j(z)| \;\;\; (4.2.1) $$

We recall the Pad\'{e} approximants:
$$ [f; p / q]_{\zeta}(z) = \frac{A(f, \zeta)(z)}{B(f, \zeta)(z)} $$

and
$$ [g; p / q]_{\zeta}(z) = \frac{A(g, \zeta)(z)}{B(g, \zeta)(z)} $$

where the polynomials $A(f, \zeta)(z)$, $B(f, \zeta)(z)$, $A(g, \zeta)(z)$ and $B(g, \zeta)(z)$ are given by the Jacobi formulas and their coefficients vary continuously on $\zeta \in L$. Since $\sup_{\zeta \in L} \sup_{z \in K_m} |[f; p / q]_{\zeta}(z) - f_j(z)| < \frac{1}{s}$, we have that the function $[f; p / q]_{\zeta}(z)$ takes only finite values for every $z \in K_m$ and for every $\zeta \in L$. Thus, $B(f, \zeta)(z) \neq 0$ for every $\zeta \in L$ and for every $z \in K_m$. By continuity, one obtains that there exists a $\delta' >0$ such that:
$$ |B(f, \zeta)(z)| \geq \delta' \; \text{for every} \; \zeta \in L \; \text{and for every} \; z \in K_m $$ 
 
Since the first $p + q + 1$ Taylor coefficients of $f$ are uniformly close enough one by one to those of $g$ (provided that $\varepsilon > 0$ is small enough), one obtains:
$$ |B(g, \zeta)(z)| \geq \frac{\delta'}{2} \; \text{for every} \; \zeta \in L \; \text{and for every} \; z \in K_m $$ 

By the triangle inequality it holds:
$$ |[g; p / q]_{\zeta}(z) - [f; p / q]_{\zeta}(z)| = |\frac{A(f, \zeta)(z)}{B(f, \zeta)(z)} - \frac{A(g, \zeta)(z)}{B(g, \zeta)(z)}| = $$
$$ = |\frac{A(f, \zeta)(z)B(g, \zeta)(z) - A(g, \zeta)(z)B(f, \zeta)(z)}{B(f, \zeta)(z)B(g, \zeta)(z)}| \leq $$
$$ \leq \frac{2}{(\delta')^2}|A(f, \zeta)(z)B(g, \zeta)(z) - A(g, \zeta)(z)B(f, \zeta)(z)| \leq $$
$$ \leq \frac{2}{(\delta')^2}|A(f, \zeta)(z)||B(f, \zeta)(z) - B(g, \zeta)(z)| + $$
$$ + \frac{2}{(\delta')^2}|B(f, \zeta)(z)||A(f, \zeta)(z) - A(g, \zeta)(z)| $$

Thus, provided that $\varepsilon > 0$ is small enough, we obtain: 
$$ \sup_{\zeta \in L} \sup_{z \in K_m} |[g; p / q]_{\zeta}(z) - [f; p / q]_{\zeta}(z)| < $$
$$ < \frac{1}{s} - \sup_{\zeta \in L} \sup_{z \in K_m} |[f; p / q]_{\zeta}(z) - f_j(z)| \;\;\; (4.2.2) $$

By combining relations $(4.2.1)$ and $(4.2.2)$ we acquire: 
$$ \sup_{\zeta \in L} \sup_{z \in K_m} |[g; p / q]_{\zeta}(z) - f_j(z)| < \frac{1}{s} $$

It follows that $g \in B(K_m, p, q, j, s)$ and so $B(K_m, p, q, j, s)$ is open in $H(\Omega)$.

\subsection{The case of \boldmath{$C(L_k, p, s)$.}} %4

We fix the parameters $(p, q) \in F$ and let $k, s \geq 1$. Let also $f \in C(L_k, p, s)$. We want to select an $\varepsilon > 0$ such that for every $g \in H(\Omega)$ with $\rho(f, g) < \varepsilon$ it follows that $g \in C(L_k, p, s)$. The number $\varepsilon > 0$ will be determined later on. The proof is similar to the one of subsection $4.1$, except from the following difference: 
$$ \sup_{\zeta \in L} \sup_{z \in L_k} |S_p(g, \zeta)(z) - g(z)| \leq \sup_{\zeta \in L} \sup_{z \in L_k} |S_p(g, \zeta)(z) - S_p(f, \zeta)(z)| + $$
$$ +\sup_{\zeta \in L} \sup_{z \in L_k} |S_p(f, \zeta)(z) - f(z)| + \sup_{z \in L_k} |f(z) - g(z)| \;\;\; (4.3.1) $$ 

If $\varepsilon > 0$ is small, one obtains that the quantity $\sup_{z \in L_k} |f(z) - g(z)|$ is also small. By the Cauchy estimates, we can achieve the following: 
$$ \sup_{\zeta \in L} \sup_{z \in L_k} |S_p(g, \zeta)(z) - S_p(f, \zeta)(z)| < \frac{1}{s} - \sup_{\zeta \in L} \sup_{z \in L_k} |S_p(f, \zeta)(z) - f(z)| - $$
$$ - \sup_{z \in L_k} |f(z) - g(z)| \;\;\; (4.3.2) $$

for $\varepsilon > 0$ small. By combining relations $(4.3.1)$ and $(4.3.2)$ we obtain: 
$$ \sup_{\zeta \in L} \sup_{z \in L_k} |S_p(g, \zeta)(z) - g(z)| < \frac{1}{s}. $$

It follows that $g \in C(L_k, p, s)$ and so $C(L_k, p, s)$ is open in $H(\Omega)$.

\subsection{The case of \boldmath{$D(L_k, p, q, s)$.}} %4

We fix the parameters $(p, q) \in F$ and let $k, s \geq 1$. Let also $f \in D(L_k, p, q, s)$. We want to select an $\varepsilon > 0$ such as for every $g \in H(\Omega)$ with $\rho(f, g) < \varepsilon$ it follows $g \in D(L_k, p, q, s)$. The proof is similar to the one of subsection $4.2$ with a few differences.

Since $f \in D(L_k, p, q, s)$, we have that $f \in D_{p, q}(\zeta)$ for every $\zeta \in L$. It follows that $g \in D_{p, q}(\zeta)$ for every $\zeta \in L$ in the same way as we did in subsection $4.2$. By the triangle inequality, we have:
$$ \sup_{\zeta \in L} \sup_{z \in L_k} |[g; p / q]_{\zeta}(z) - g(z)| \leq \sup_{\zeta \in L} \sup_{z \in L_k} |[g; p / q]_{\zeta}(z) - [f; p / q]_{\zeta}(z)| + $$
$$ + \sup_{\zeta \in L} \sup_{z \in L_k} |[f; p / q]_{\zeta}(z) - f(z)| + \sup_{z \in L_k} |f(z) - g(z)| \;\;\; (4.4.1) $$

As we did in subsection $4.2$, for $\varepsilon > 0$ small, one obtains that the quantity $\sup_{z \in L_k} |f(z) - g(z)|$ can become arbitrary small, while at the same time the quantity $\sup_{\zeta \in L} \sup_{z \in L_k} |[g; p / q]_{\zeta}(z) - [f; p / q]_{\zeta}(z)|$ can become strictly less than $\frac{1}{s} - \sup_{\zeta \in L} \sup_{z \in L_k} |[f; p / q]_{\zeta}(z) - f(z)| > 0$, i. e.:
$$ \sup_{\zeta \in L} \sup_{z \in L_k} |[g; p / q]_{\zeta}(z) - [f; p / q]_{\zeta}(z)| < \frac{1}{s} - \sup_{\zeta \in L} \sup_{z \in L_k} |[f; p / q]_{\zeta}(z) - f(z)| \;\;\; (4.4.2) $$

The desired relation $\sup_{z \in L_k} |[g; p / q]_{\zeta}(z) - g(z)| < \frac{1}{s}$ follows by combining relations $(4.4.1)$ and $(4.4.2)$ and by using the triangle inequality.

It follows that $g \in D(L_k, p, q, s)$ and so $D(L_k, p, q, s)$ is open in $H(\Omega)$. The proof of Claim \ref{claim 4.4} is complete.

\subsection{Density of \boldmath{$\mathcal{X}(m, k, j, s)$}.} %4

In order to prove Claim \ref{claim 4.5}, we fix the parameters $m, k, j, s \geq 1$ and we want to prove that the set:
$$ \mathcal{X}(m, k, j, s) = $$
$$ = \bigcup_{(p, q) \in F} A(K_m, p, j, s) \cap B(K_m, p, q, j, s) \cap C(L_k, p, s) \cap D(L_k, p, q, s) $$

is dense in $H(\Omega)$.

We consider a function $f \in H(\Omega)$, $L_1 \subseteq \Omega$ a compact set and $\varepsilon > 0$. We want to find a function $g \in \mathcal{X}(m, k, j, s)$ such that $ \sup_{z \in L_1} |f(z) - g(z)| < \varepsilon$.

We consider a compact set $L_{n_0} \subseteq \Omega$ with connected complement such that $L_1 \cup L_k \cup L \subseteq L_{n_0}$. Since $L_{n_0}$ and $K_m$ are disjoint compact sets with connected complements, the set $L_{n_0} \cup K_m$ is also a compact with connected complement. 

Consider now the following function:
$$ w(z) = \begin{cases} f_j(z), \;\; \mbox{if} \;\; z \in K_m 
\\ 
f(z),\;\; \mbox{if} \;\; z \in L_{n_0} \end{cases} $$

The function $w$ is well defined because $K_m \cap L_{n_0} = \emptyset$ and also $w \in A(K_M \cup L_{n_0})$. We apply Mergelyan's theorem to approximate $w$ by a polynomial $P$ uniformly on $K_m \cup L_{n_0}$. Our assumption on $F$ allows us to find an index $k_n \in \mathbb{N}$ such that $(p_{k_n}, q_{k_n}) \in F$ and $p_{k_n} > deg(P(z))$.

Now, let us consider the function $u(z) = P(z) + dz^{p_{k_n}}$, where $d \in \mathbb{C} \setminus \{ 0 \}$ and $|d|$ is small enough. It follows that $u$ and $w$ are uniformly close on $K_m \cup L_{n_0}$. This also implies that the functions $u$ and $f$ are uniformly close on $L_{n_0}$. Moreover, $u \in D_{p_{k_n}, q_{k_n}}(\zeta)$ and $[u; p_{k_n} / q_{k_n}]_{\zeta}(z) = u(z)$ for every $z, \zeta \in \mathbb{C}$ since $u$ is a polynomial. In order to prove that $u \in \mathcal{X}(m, k, j, s)$, we verify the following:

\begin{itemize}

\item[(1)]
$u \in D_{p_{k_n}, q_{k_n}}(\zeta)$ for every $\zeta \in \mathbb{C}$, since $u$ is a polynomial of degree exactly $p_{k_n}$, according to Proposition \ref{proposition 2.4}.

\item[(2)]
$u \in A(K_m, p_{k_n}, j, s)$ since:
$$ \sup_{\zeta \in L} \sup_{z \in K_m} |S_{p_{k_n}}(u, \zeta)(z) -  f_j(z)| = \sup_{z \in K_m} |u(z) -  f_j(z)| $$

The last quantity can become arbitrary small, provided that $|d| > 0$ is small enough.

\item[(3)]
$u \in B(K_m, p_{k_n}, q_{k_n}, j, s)$ since:
$$ \sup_{\zeta \in L} \sup_{z \in K_m} |[u; p_{k_n} / q_{k_n}]_{\zeta}(z) - f_j(z)| = \sup_{z \in K_m} |u(z) - f_j(z)| $$

The last quantity can become arbitrary small, provided that $|d| > 0$ is also small.

\item[(4)]
$u \in C(L_k, p_{k_n}, s)$ since:
$$ \sup_{\zeta \in L} \sup_{z \in L_k} |S_{p_{k_n}}(u, \zeta)(z) - u(z)| = \sup_{z \in L_k} |u(z) - u(z)| = 0 $$

\item[(5)]
$u \in D(L_k, p_{k_n}, q_{k_n}, s)$ since:
$$ \sup_{\zeta \in L} \sup_{z \in L_k} |[u; p_{k_n} / q_{k_n}]_{\zeta}(z) - u(z)| = \sup_{z \in L_k} |u(z) - u(z)| = 0 $$

\end{itemize}

It follows that $u \in \mathcal{X}(m, k, j, s)$. The proof of Claim \ref{claim 4.5} is complete. Baire's theorem yields the result.

\end{proof}

\begin{theorem} \label{theorem 4.6}

Let $\Omega \subseteq \mathbb{C}$ a simply connected domain and $\zeta \in \Omega$ be a fixed element. Also, let $F \subseteq \mathbb{N} \times \mathbb{N}$ be a non empty set containing exactly a sequence $(p_n, q_n)_{n \geq 1}$ such that $p_n \to + \infty$. Then, there exists a function $f \in H(\Omega)$ satisfying the following:

For every compact set $K \subseteq \mathbb{C} \setminus \Omega$ with connected complement and for every function $h \in A(K)$, there exists a subsequence $(p_{k_n}, q_{k_n})_{n \geq 1}$ of the sequence $(p_n, q_n)_{n \geq 1}$ such that:
\begin{itemize}

\item[(1)]
$f \in D_{p_{k_n}, q_{k_n}}(\zeta)$ for every $n \in \mathbb{N}$

\item[(2)]
$\sup_{z \in K} |S_{p_{k_n}}(f, \zeta)(z) - h(z)| \to 0$ as $n \to + \infty$

\item[(3)]
$\sup_{z \in K} |[f; p_{k_n} / q_{k_n}]_{\zeta}(z) - h(z)| \to 0$ as $n \to + \infty$

\item[(4)]
For every compact set $J \subseteq \Omega$ it holds:
$$ \sup_{z \in J} |S_{p_{k_n}}(f, \zeta)(z) - f(z)| \to 0 \; \text{as} \; n \to + \infty $$

\item[(5)]
For every compact set $J \subseteq \Omega$ it holds:
$$  \sup_{z \in J} |[f; p_{k_n} / q_{k_n}]_{\zeta}(z) - f(z)| \to 0 \; \text{as} \; n \to + \infty $$

\end{itemize}  

Moreover, the set of all functions satisfying $(1) - (5)$ is dense and $G_\delta$ in $H(\Omega)$.

\end{theorem}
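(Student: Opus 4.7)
The plan is to obtain Theorem \ref{theorem 4.6} as an immediate specialization of Theorem \ref{theorem 4.3}. The point is that Theorem \ref{theorem 4.6} is, word for word, Theorem \ref{theorem 4.3} in the case where the compact set $L \subseteq \Omega$ is replaced by the singleton $L = \{\zeta\}$. Since any single point $\zeta \in \Omega$ is trivially a compact subset of $\Omega$, Theorem \ref{theorem 4.3} applies with this choice of $L$.

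With $L = \{\zeta\}$, every supremum of the form $\sup_{\zeta' \in L}$ that appears in conditions $(1)$--$(5)$ of Theorem \ref{theorem 4.3} collapses to the value at the single point $\zeta$; in other words $\sup_{\zeta' \in \{\zeta\}} \sup_{z \in K} |S_{p_{k_n}}(f,\zeta')(z) - h(z)| = \sup_{z \in K} |S_{p_{k_n}}(f,\zeta)(z) - h(z)|$, and similarly for the Pad\'e approximants and for the conditions involving compact sets $J \subseteq \Omega$. Thus conditions $(1)$--$(5)$ of Theorem \ref{theorem 4.3} become exactly conditions $(1)$--$(5)$ of Theorem \ref{theorem 4.6}.

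Genericity is inherited automatically: Theorem \ref{theorem 4.3} guarantees that the set of $f \in H(\Omega)$ satisfying $(1)$--$(5)$ (for the chosen $L = \{\zeta\}$) is dense and $G_\delta$ in $H(\Omega)$, which is precisely the conclusion claimed in Theorem \ref{theorem 4.6}. No additional analysis on Hankel determinants, Jacobi formulas, or Mergelyan approximation is needed, since all of that work has already been carried out in the proof of Theorem \ref{theorem 4.3}.

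The only thing to be careful about is that the proof of Theorem \ref{theorem 4.3} does not implicitly use any non-degeneracy of $L$ (for instance, that $L$ has positive diameter or interior); inspecting that proof, all the compactness arguments go through verbatim for $L = \{\zeta\}$, and if anything become simpler, so no obstruction arises. Therefore the proof consists of a single line: apply Theorem \ref{theorem 4.3} with $L = \{\zeta\}$.
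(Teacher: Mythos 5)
Your proposal is correct and coincides with the paper's own proof, which likewise disposes of Theorem \ref{theorem 4.6} by applying Theorem \ref{theorem 4.3} with $L = \{\zeta\}$. Your additional check that the argument of Theorem \ref{theorem 4.3} does not rely on any non-degeneracy of $L$ is a sensible sanity verification but adds nothing beyond the one-line reduction already in the paper.
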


\begin{proof} It suffices to apply Theorem \ref{theorem 4.3} for $L = \{ \zeta \}$.

\end{proof}

\begin{theorem} \label{theorem 4.7}

Let $\Omega \subseteq \mathbb{C}$ a simply connected domain. Also, let $F \subseteq \mathbb{N} \times \mathbb{N}$ be a non empty set containing exactly a sequence $(p_n, q_n)_{n \geq 1}$ such that $p_n \to + \infty$. Then, there exists a function $f \in H(\Omega)$ satisfying the following:

For every compact set $K \subseteq \mathbb{C} \setminus \Omega$ with connected complement and for every function $h \in A(K)$, there exists a subsequence $(p_{k_n}, q_{k_n})_{n \geq 1}$ of the sequence $(p_n, q_n)_{n \geq 1}$ such that for every compact set $L \subseteq \Omega$ we have:

\begin{itemize}

\item[(1)]
$f \in D_{p_{k_n}, q_{k_n}}(\zeta)$ for every $\zeta \in L$ and for every $n \geq n(L)$ for an index $n(L) \in \mathbb{N}$

\item[(2)]
$\sup_{\zeta \in L} \sup_{z \in K} |S_{p_{k_n}}(f, \zeta)(z) - h(z)| \to 0$ as $n \to + \infty$

\item[(3)]
$\sup_{\zeta \in L} \sup_{z \in K} |[f; p_{k_n} / q_{k_n}]_{\zeta}(z) - h(z)| \to 0$ as $n \to + \infty$

\item[(4)]
$\sup_{\zeta \in L} \sup_{z \in L} |S_{p_{k_n}}(f, \zeta)(z) - f(z)| \to 0$ as $n \to + \infty$

\item[(5)]
$\sup_{\zeta \in L} \sup_{z \in L} |[f; p_{k_n} / q_{k_n}]_{\zeta}(z) - f(z)| \to 0$ as $n \to + \infty$

\end{itemize}  

Moreover, the set of all functions satisfying $(1) - (5)$ is dense and $G_\delta$ in $H(\Omega)$.

\end{theorem}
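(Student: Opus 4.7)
The plan is to reduce to Theorem~\ref{theorem 4.3} applied to an exhausting family of compact subsets of $\Omega$, and then combine the resulting dense $G_\delta$ sets via Baire's theorem. Let $\{L_k\}_{k \geq 1}$ be the exhausting family of compact subsets of $\Omega$ supplied by Lemma~\ref{lemma 4.2}. For each $k \geq 1$, Theorem~\ref{theorem 4.3} applied to $\Omega$, the compact set $L_k$, and the same index set $F$ yields a dense $G_\delta$ subset $\mathcal{X}_k \subseteq H(\Omega)$ consisting of those functions that satisfy conclusions (1)--(5) of Theorem~\ref{theorem 4.3} with $L = L_k$. By Baire's theorem, $\mathcal{X} := \bigcap_{k \geq 1} \mathcal{X}_k$ is dense and $G_\delta$ in $H(\Omega)$.

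The main work is to show that $\mathcal{X}$ coincides with the set of all functions satisfying Theorem~\ref{theorem 4.7}. Fix $f \in \mathcal{X}$, a compact set $K \subseteq \mathbb{C} \setminus \Omega$ with connected complement, and $h \in A(K)$. For each $k \geq 1$, since $f \in \mathcal{X}_k$, Theorem~\ref{theorem 4.3} provides a subsequence $(p_{\sigma_k(n)}, q_{\sigma_k(n)})_{n \geq 1}$ of $(p_n, q_n)$ witnessing conclusions (1)--(5) of Theorem~\ref{theorem 4.3} with $L = L_k$. I would then diagonalise: inductively select $n_k \in \mathbb{N}$ so that $\sigma_k(n_k) > \sigma_{k-1}(n_{k-1})$ (ensuring the diagonal is a genuine subsequence) and so that each of the four uniform norms in (2)--(5) of Theorem~\ref{theorem 4.3} with $L = J = L_k$, evaluated at the index $\sigma_k(n_k)$, is strictly less than $1/k$. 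Set $m_k := \sigma_k(n_k)$.

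For any compact $L \subseteq \Omega$, pick $n(L)$ with $L \subseteq L_{n(L)}$; then for every $k \geq n(L)$ the inclusion $L \subseteq L_k$ gives $f \in D_{p_{m_k}, q_{m_k}}(\zeta)$ for every $\zeta \in L$, while each supremum in (2)--(5) with $\zeta \in L$ and (for (4), (5)) $z \in L$ is dominated by the corresponding supremum over $L_k$, hence by $1/k$. This verifies (1)--(5) of Theorem~\ref{theorem 4.7} along $(p_{m_k}, q_{m_k})$. Conversely, if $f$ satisfies the conclusion of Theorem~\ref{theorem 4.7}, then for each $k$ the single subsequence it supplies for given $K, h$ witnesses Theorem~\ref{theorem 4.3} with $L = L_k$: properties (1)--(3) there are immediate, and (4), (5) for an arbitrary compact $J \subseteq \Omega$ follow by applying Theorem~\ref{theorem 4.7} with the compact $L_k \cup J \subseteq \Omega$. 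Hence $f \in \mathcal{X}_k$ for every $k$.

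The main obstacle is the diagonal extraction: the subsequences produced by Theorem~\ref{theorem 4.3} for different $L_k$ are a priori unrelated, and one must extract a single subsequence of $(p_n, q_n)$ that handles every compact $L \subseteq \Omega$ simultaneously. The nested exhaustion $L_k \subseteq L_{k+1}^o$ together with the monotonicity of suprema (uniform convergence on the larger compact $L_k$ dominates uniform convergence on any $L \subseteq L_k$) makes the diagonal $m_k = \sigma_k(n_k)$ succeed.
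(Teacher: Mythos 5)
Your proof is correct and takes essentially the same route as the paper: apply Theorem~\ref{theorem 4.3} to an exhausting family $\{L_k\}_{k \geq 1}$ of compact subsets of $\Omega$ and intersect the resulting dense $G_\delta$ sets via Baire's theorem. The only difference is that you make explicit the diagonal extraction of a single subsequence (and the monotonicity of the suprema over the nested $L_k$), which is precisely the verification hidden in the paper's remark that ``one can verify'' $\mathcal{A} = \bigcap_{k, m \geq 1} \mathcal{A}_{k, m}$.
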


\begin{proof} Let $\mathcal{A}$ denote the set of all functions satisfying $(1) - (5)$. We apply Theorem \ref{theorem 4.3} for $L = L_k$ and for $K = K_m$ (and that for every $k, m \geq 1$) and so, according to Baire's theorem we obtain a $G_{\delta}$ dense set in $H(\Omega)$; the set $\mathcal{A}_{k, m}$. One can verify by using Baire's theorem once more that $\mathcal{A} = \cap_{k, m \geq 1} \mathcal{A}_{k, m}$ and so $\mathcal{A}$ is also $G_{\delta}$ dense in $H(\Omega)$.

\end{proof}

\begin{remark} \label{remark 4.8}

The previous approximation (see for instance $(2)$ and $(3)$ of Theorem \ref{theorem 4.7}) may be strengthen to be valid for all order of derivatives, provided that the function $h$ is a polynomial. The new class of universal function is included in the old one and it is an open question if the inclusion is strict or not. The first paper where this has been done for Taylor series is \cite{dekaokto}. If the compact set $K$ where disjoint from $\overline{\Omega}$, then the approximation at the level of each derivative is automatic. We do not insist towards this direction with the exception of section $8$ below.

\end{remark}

\section{Universality in the sense of Luh and Chui - Parnes} %5

If we replace the sets $\{ K_m \}_{m \geq 1}$ of section $4$ with the compact sets given below we obtain similar results in $H(\Omega)$ where the universal approximation is not requested to be valid on the boundary of $\Omega$.

\begin{lemma} \label{lemma 5.1} (\cite{dekatessera}, \cite{dekapente}) Let $\Omega \subseteq \mathbb{C}$ be a simply connected domain with $\overline{\Omega} \neq \mathbb{C}$. Then there exists a sequence of compact subsets $\{ K_m \}_{m \geq 1}$ of $\mathbb{C}$ with connected complements satisfying the following properties:  

\begin{itemize}

\item[(i)]
$K_m \cap \overline{\Omega} = \emptyset$ for every $m \geq 1$

\item[(ii)]
If $K \subseteq \mathbb{C}$ is a compact set with connected complement satisfying $K \cap \overline{\Omega} = \emptyset$, then there exists an index $m \in \mathbb{N}$ such that $K \subseteq K_m$

\end{itemize}

\end{lemma}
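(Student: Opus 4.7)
My plan is to construct $\{K_m\}$ by enumeration of appropriate finite unions of rational closed disks, and then to verify (ii) by an approximation argument. First, let $\mathcal{D}$ be the countable collection of closed disks $\overline{B}(q, r)$ with $q \in \mathbb{Q} + i\mathbb{Q}$, $r \in \mathbb{Q}_{> 0}$, and $\overline{B}(q, r) \cap \overline{\Omega} = \emptyset$. Let $\mathcal{F}$ denote the countable subfamily consisting of all finite unions of members of $\mathcal{D}$ whose complement in $\mathbb{C}$ is connected, enumerated as $\{K_m\}_{m \geq 1}$. By construction each $K_m$ is compact with $K_m \cap \overline{\Omega} = \emptyset$ and $\mathbb{C} \setminus K_m$ connected, so (i) holds.

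To prove (ii), fix a compact set $K \subseteq \mathbb{C}$ with $\mathbb{C} \setminus K$ connected and $K \cap \overline{\Omega} = \emptyset$, and set $\varepsilon = d(K, \overline{\Omega}) > 0$. For a small rational $\rho > 0$, density of $\mathbb{Q} + i\mathbb{Q}$ together with compactness of $K$ produces finitely many rational points $q_1, \ldots, q_N$ such that $K \subseteq \bigcup_{i = 1}^N B(q_i, \rho)$ and each $q_i$ lies within $\rho$ of some point of $K$. Set $E = \bigcup_{i = 1}^N \overline{B}(q_i, \rho)$. For $\rho < \varepsilon / 2$, each disk is disjoint from $\overline{\Omega}$, hence belongs to $\mathcal{D}$, and $K \subseteq E \subseteq \{z : d(z, K) \leq 2\rho\}$.

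The main obstacle is to ensure $\mathbb{C} \setminus E$ is connected, so that $E \in \mathcal{F}$. I would argue by contradiction: suppose $V$ is a bounded connected component of $\mathbb{C} \setminus E$ and pick $v \in V$. Since $V \subseteq \mathbb{C} \setminus K$ and the latter is open and path-connected, there is a continuous path $\gamma : [0, 1] \to \mathbb{C} \setminus K$ from $v$ to a point $w$ outside the ball containing $E$. Then $\eta = d(\gamma([0, 1]), K) > 0$ by compactness, and for $\rho < \eta / 2$ the inclusion $E \subseteq \{z : d(z, K) \leq 2\rho\}$ forces $\gamma$ to avoid $E$, contradicting the boundedness of $V$. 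The subtle point, that the threshold $\eta$ can be chosen uniformly for all candidate bounded components, is resolved by the standard topological fact that for a compact $K$ with $\mathbb{C} \setminus K$ connected, $\mathbb{C} \setminus \overline{N_\delta(K)}$ is also connected for all sufficiently small $\delta > 0$ (any persistent bounded component as $\delta \to 0$ would yield, via a limit argument on a subsequence of interior points, a bounded region separated from infinity inside $\mathbb{C} \setminus K$, contradicting its connectedness). Picking $\rho$ both below $\varepsilon / 2$ and below such a topological threshold guarantees $\mathbb{C} \setminus E$ connected; thus $E \in \mathcal{F}$, so $E = K_m$ for some $m$, and $K \subseteq K_m$, establishing (ii).
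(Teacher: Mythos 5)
The paper itself offers no proof of this lemma (it is quoted from Luh's papers), so your argument must stand on its own; its first half is fine, but the last step contains a genuine gap. The ``standard topological fact'' you invoke --- that for a compact $K$ with $\mathbb{C} \setminus K$ connected the set $\mathbb{C} \setminus N_\delta(K)$, where $N_\delta(K) = \{ z : d(z, K) \leq \delta \}$, is connected for all sufficiently small $\delta > 0$ --- is false. Take $K = \{ 0 \} \cup \bigcup_{n \geq 1} K_n$, where $K_n$ is the circle $|z - 3 \cdot 4^{-n}| = 4^{-n}$ with an open arc of length $4^{-n}/2$ removed. The $K_n$ lie in pairwise disjoint annuli accumulating only at $0$, so $K$ is compact, and $\mathbb{C} \setminus K$ is connected because a point inside any of the circles escapes to infinity through the corresponding gap. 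However, if $4^{-(n+1)} \leq \delta < 4^{-n}$, then $N_\delta(K)$ contains the entire circle $|z - 3 \cdot 4^{-n}| = 4^{-n}$ (every point of the removed arc lies within arc-length $4^{-n}/4 \leq \delta$ of an endpoint of $K_n$), while the centre $3 \cdot 4^{-n}$ is at distance exactly $4^{-n} > \delta$ from $K$; hence $\mathbb{C} \setminus N_\delta(K)$ has a bounded component. Since the intervals $[4^{-(n+1)}, 4^{-n})$ cover all of $(0, 1/4)$, no threshold $\delta_0$ exists. Your limit argument proves only that the bounded components of $\mathbb{C} \setminus N_\delta(K)$ are eventually contained in any prescribed neighbourhood of $K$; it does not make them disappear. (A smaller independent problem: even when $\mathbb{C} \setminus N_{2\rho}(K)$ is connected, the particular union $E$ of covering disks may leave tiny curvilinear holes wedged between three mutually overlapping disks, so $E$ need not belong to your family $\mathcal{F}$.)

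The construction can be repaired by filling in holes rather than forbidding them. Work with closed squares of a rational grid of mesh $h$: let $Q_h$ be the union of the closed grid squares meeting $K$, and let $\widehat{Q_h}$ be $Q_h$ together with all bounded components of $\mathbb{C} \setminus Q_h$. Then $K \subseteq Q_h \subseteq N_{2h}(K)$, and $\widehat{Q_h}$ is again a finite union of closed rational grid squares, compact and with connected complement, so it belongs to the countable family of all finite unions of closed rational grid squares which are disjoint from $\overline{\Omega}$ and have connected complement; this family replaces your $\mathcal{F}$. The only point left to check is that $\widehat{Q_h} \cap \overline{\Omega} = \emptyset$ for $h$ small, and here your limit argument is exactly the right tool: if there were $h_j \to 0$ and $x_j \in \widehat{Q_{h_j}}$ with $d(x_j, K) \geq \varepsilon / 2$, pass to a subsequence with $x_j \to x$; then $x \in \mathbb{C} \setminus K$, so there is a path $\gamma$ in $\mathbb{C} \setminus K$ from $x$ to a point outside a large disk containing all the $Q_{h_j}$, with $\eta = d(\gamma([0,1]), K) > 0$; for $j$ large enough that $|x_j - x| < \eta / 2$ and $2 h_j < \eta / 2$, the segment from $x_j$ to $x$ followed by $\gamma$ avoids $Q_{h_j}$, placing $x_j$ in the unbounded component of $\mathbb{C} \setminus Q_{h_j}$ and contradicting $x_j \in \widehat{Q_{h_j}} \setminus Q_{h_j}$ (the case $x_j \in Q_{h_j}$ is excluded directly by $Q_{h_j} \subseteq N_{2 h_j}(K)$). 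Choosing $h$ with $2h < \varepsilon / 2$ and below this threshold gives $\widehat{Q_h} \subseteq N_{\varepsilon / 2}(K)$, hence $\widehat{Q_h} \cap \overline{\Omega} = \emptyset$, and property (ii) follows.
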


In this way we obtain $G_{\delta}$ - dense classes of functions which are larger than the classes of functions studied in section $4$ (see \cite{dio}, \cite{dekaepta}). This classes extend the classes of Universal Taylor series obtained in the 70's by Luh (\cite{dekatessera}, \cite{dekapente}) and Chui - Parnes (\cite{dekaexi}). The results we obtain are the following and we omit their proofs, since they are similar to the ones of the corresponding theorems in section $4$.

\begin{theorem} \label{theorem 5.2}

Let $\Omega \subseteq \mathbb{C}$ a simply connected domain with $\overline{\Omega} \neq \mathbb{C}$ and $L \subseteq \Omega$ be a compact set. Also, let $F \subseteq \mathbb{N} \times \mathbb{N}$ be a non empty set containing exactly a sequence $(p_n, q_n)_{n \geq 1}$ such that $p_n \to + \infty$. Then, there exists a function $f \in H(\Omega)$ satisfying the following:

For every compact set $K \subseteq \mathbb{C} \setminus \overline{\Omega}$ with connected complement and for every function $h \in A(K)$, there exists a subsequence $(p_{k_n}, q_{k_n})_{n \geq 1}$ of the sequence $(p_n, q_n)_{n \geq 1}$ such that:

\begin{itemize}

\item[(1)]
$f \in D_{p_{k_n}, q_{k_n}}(\zeta)$ for every $n \in \mathbb{N}$ and for every $\zeta \in L$

\item[(2)]
$\sup_{\zeta \in L} \sup_{z \in K} |S_{p_{k_n}}(f, \zeta)(z) - h(z)| \to 0$ as $n \to + \infty$

\item[(3)]
$\sup_{\zeta \in L} \sup_{z \in K} |[f; p_{k_n} / q_{k_n}]_{\zeta}(z) - h(z)| \to 0$ as $n \to + \infty$

\item[(4)]
For every compact set $J \subseteq \Omega$ it holds:
$$ \sup_{\zeta \in L} \sup_{z \in J} |S_{p_{k_n}}(f, \zeta)(z) - f(z)| \to 0 \; \text{as} \; n \to + \infty $$

\item[(5)]
For every compact set $J \subseteq \Omega$ it holds:
$$ \sup_{\zeta \in L} \sup_{z \in J} |[f; p_{k_n} / q_{k_n}]_{\zeta}(z) - f(z)| \to 0 \; \text{as} \; n \to + \infty $$
\end{itemize}  

Moreover, the set of all functions satisfying $(1) - (5)$ is dense and $G_\delta$ in $H(\Omega)$.

\end{theorem}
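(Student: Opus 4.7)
The plan is to mimic the proof of Theorem \ref{theorem 4.3} essentially verbatim, substituting the compact sets $\{K_m\}_{m\ge 1}$ furnished by Lemma \ref{lemma 5.1} for the ones coming from Lemma \ref{lemma 4.1}. First I would fix an enumeration $\{f_j\}_{j\ge 1}$ of polynomials with coefficients in $\mathbb{Q}+i\mathbb{Q}$, an exhausting family $\{L_k\}_{k\ge 1}$ of $\Omega$ as in Lemma \ref{lemma 4.2}, and for every $(p,q)\in F$ and all $j,s,m,k\ge 1$ introduce the four sets $A(K_m,p,j,s)$, $B(K_m,p,q,j,s)$, $C(L_k,p,s)$ and $D(L_k,p,q,s)$ with exactly the same definitions as in Section 4. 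Mergelyan's theorem together with the defining property of $\{K_m\}$ in Lemma \ref{lemma 5.1} then allows one to write the set $\mathcal{X}$ of functions satisfying (1)--(5) as
\[
\mathcal{X}=\bigcap_{j,m,k,s\ge 1}\ \bigcup_{(p,q)\in F}A(K_m,p,j,s)\cap B(K_m,p,q,j,s)\cap C(L_k,p,s)\cap D(L_k,p,q,s),
\]
so by Baire's theorem the proof reduces to the openness of each of the four building-block sets and the density of the above union for every fixed $j,m,k,s$.

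The openness arguments would be taken over unaltered from Subsections 4.1--4.4. They only use the compactness of $L$ and $K_m$, the Cauchy estimates to turn $H(\Omega)$-closeness into uniform closeness of the first $p+q+1$ Taylor coefficients at every $\zeta\in L$, the continuous dependence of the Hankel determinant and of the Jacobi-formula polynomials $A(f,\zeta)(z), B(f,\zeta)(z)$ on these coefficients, and a continuity lower bound $|B(f,\zeta)(z)|\geq \delta'>0$ on $K_m$ (or $L_k$). None of these ingredients cares whether $K_m\subseteq\mathbb{C}\setminus\Omega$ or the strictly stronger $K_m\cap\overline{\Omega}=\emptyset$ holds, so the same arguments apply.

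For density I would follow Subsection 4.5: given $f\in H(\Omega)$, a compact $L_1\subseteq\Omega$ and $\varepsilon>0$, I pick a compact set $L_{n_0}\subseteq\Omega$ with connected complement containing $L_1\cup L_k\cup L$. Because now $K_m\cap\overline{\Omega}=\emptyset$, the disjointness of $L_{n_0}$ and $K_m$ is automatic, and the standard fact that the union of two disjoint compact sets with connected complement in a simply connected region again has connected complement gives $L_{n_0}\cup K_m$ a connected complement. Then I set
\[
w(z)=\begin{cases} f_j(z), & z\in K_m,\\ f(z), & z\in L_{n_0},\end{cases}
\]
approximate $w\in A(K_m\cup L_{n_0})$ uniformly by a polynomial $P$ via Mergelyan, pick $(p_{k_n},q_{k_n})\in F$ with $p_{k_n}>\deg P$, and put $u(z)=P(z)+dz^{p_{k_n}}$ with $d\neq 0$ and $|d|$ small. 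Proposition \ref{proposition 2.4} then gives $u\in D_{p_{k_n},q_{k_n}}(\zeta)$ and $[u;p_{k_n}/q_{k_n}]_\zeta(z)=u(z)$ for every $\zeta,z\in\mathbb{C}$, and verifying membership of $u$ in each of $A,B,C,D$ reduces to the suprema $\sup_{z\in K_m}|u(z)-f_j(z)|$ (arbitrarily small by the construction) and $\sup_{z\in L_k}|u(z)-u(z)|=0$.

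The only genuinely delicate step, as in Section 4, is the uniform-in-$\zeta$ control of the Hankel determinant and of $|B(f,\zeta)(z)|$ needed for openness of $B$ and $D$: one must pass from pointwise non-vanishing at each $\zeta\in L$ to a uniform lower bound on the compact $L$, and then propagate this bound to a nearby $g$ via Cauchy estimates on a neighbourhood of $L$ inside $\Omega$. Once this is in hand, the rest of the proof is a direct adaptation of Section 4 and Baire's theorem yields that $\mathcal{X}$ is dense and $G_\delta$ in $H(\Omega)$.
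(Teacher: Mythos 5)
Your proposal is correct and coincides with the paper's intent: the paper explicitly omits the proof of Theorem \ref{theorem 5.2}, stating that it is obtained from the proof of Theorem \ref{theorem 4.3} by replacing the exhaustion $\{K_m\}_{m\ge 1}$ of Lemma \ref{lemma 4.1} with the one of Lemma \ref{lemma 5.1}, which is exactly what you do. The openness and density arguments you carry over (including the use of the disjoint compacts $L_{n_0}$ and $K_m$ with connected complements and Proposition \ref{proposition 2.4} for the perturbed polynomial $u$) match Subsections 4.1--4.5 of the paper.
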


\begin{theorem} \label{theorem 5.3}

Let $\Omega \subseteq \mathbb{C}$ a simply connected domain and $\zeta \in \Omega$ be a fixed element. Also, let $F \subseteq \mathbb{N} \times \mathbb{N}$ be a non empty set containing exactly a sequence $(p_n, q_n)_{n \geq 1}$ such that $p_n \to + \infty$. Then, there exists a function $f \in H(\Omega)$ satisfying the following:

For every compact set $K \subseteq \mathbb{C} \setminus \overline{\Omega}$ with connected complement and for every function $h \in A(K)$, there exists a subsequence $(p_{k_n}, q_{k_n})_{n \geq 1}$ of the sequence $(p_n, q_n)_{n \geq 1}$ such that:

\begin{itemize}

\item[(1)]
$f \in D_{p_{k_n}, q_{k_n}}(\zeta)$ for every $n \in \mathbb{N}$

\item[(2)]
$\sup_{z \in K} |S_{p_{k_n}}(f, \zeta)(z) - h(z)| \to 0$ as $n \to + \infty$

\item[(3)]
$\sup_{z \in K} |[f; p_{k_n} / q_{k_n}]_{\zeta}(z) - h(z)| \to 0$ as $n \to + \infty$

\item[(4)]
For every compact set $J \subseteq \Omega$ it holds:
$$ \sup_{z \in J} |S_{p_{k_n}}(f, \zeta)(z) - f(z)| \to 0 \; \text{as} \; n \to + \infty $$

\item[(5)]
For every compact set $J \subseteq \Omega$ it holds:
$$  \sup_{z \in J} |[f; p_{k_n} / q_{k_n}]_{\zeta}(z) - f(z)| \to 0 \; \text{as} \; n \to + \infty $$

\end{itemize}  

Moreover, the set of all functions satisfying $(1) - (5)$ is dense and $G_\delta$ in $H(\Omega)$.

\end{theorem}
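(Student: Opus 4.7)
The plan is to obtain Theorem \ref{theorem 5.3} as an immediate specialization of Theorem \ref{theorem 5.2}, by taking the compact set of centers to be the singleton $L = \{\zeta\}$. This mirrors exactly the way the authors derive Theorem \ref{theorem 4.6} from Theorem \ref{theorem 4.3} in Section $4$. Since $\{\zeta\}$ is trivially a compact subset of $\Omega$, Theorem \ref{theorem 5.2} is directly applicable with this choice.

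With $L = \{\zeta\}$ the iterated suprema $\sup_{\zeta' \in L}\sup_{z \in \cdot}|\,\cdot\,|$ that appear in conditions $(2)$--$(5)$ of Theorem \ref{theorem 5.2} collapse to the single suprema $\sup_{z \in \cdot}|\,\cdot\,|$ evaluated at the center $\zeta$, and the universal quantifier ``for every $\zeta' \in L$'' in condition $(1)$ collapses to the single statement $f \in D_{p_{k_n}, q_{k_n}}(\zeta)$. Thus conditions $(1)$--$(5)$ of Theorem \ref{theorem 5.3} are precisely conditions $(1)$--$(5)$ of Theorem \ref{theorem 5.2} read at this particular $L$. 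Consequently the class of universal functions produced by Theorem \ref{theorem 5.2} with $L = \{\zeta\}$ coincides with the class required here, and the density and $G_\delta$ property in $H(\Omega)$ is inherited verbatim. No further application of Baire's theorem is needed, because the intersection one would take is already trivial.

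There is no genuine obstacle at this step: the serious analytic work (openness of the sets $A(K_m, p, j, s)$, $B(K_m, p, q, j, s)$, $C(L_k, p, s)$, $D(L_k, p, q, s)$, stability of the Hankel determinant under small perturbations, and density via a Mergelyan argument on $K_m \cup L_{n_0}$ combined with the high-degree perturbation $u(z) = P(z) + dz^{p_{k_n}}$) was carried out once and for all in Theorem \ref{theorem 5.2}, where the exhaustion $\{K_m\}_{m \geq 1}$ comes from Lemma \ref{lemma 5.1} rather than from Lemma \ref{lemma 4.1}. The only thing worth a brief verification is that the enumeration of compact sets $K \subseteq \mathbb{C} \setminus \overline{\Omega}$ with connected complement used in Theorem \ref{theorem 5.2} is the same one relevant here; this is exactly what Lemma \ref{lemma 5.1} guarantees, so the deduction is complete.
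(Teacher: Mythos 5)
Your proposal is correct and is exactly the route the paper intends: Section 5 omits the proofs as being analogous to Section 4, where Theorem \ref{theorem 4.6} is obtained from Theorem \ref{theorem 4.3} precisely by taking $L = \{\zeta\}$, so specializing Theorem \ref{theorem 5.2} to the singleton $L = \{\zeta\}$ is the same argument. The only cosmetic point is that Theorem \ref{theorem 5.2} carries the hypothesis $\overline{\Omega} \neq \mathbb{C}$ while Theorem \ref{theorem 5.3} as stated does not; in the excluded case $\overline{\Omega} = \mathbb{C}$ the conclusion is vacuous since no nonempty compact $K \subseteq \mathbb{C} \setminus \overline{\Omega}$ exists, so nothing is lost.
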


\begin{theorem} \label{theorem 5.4}

Let $\Omega \subseteq \mathbb{C}$ a simply connected domain. Also, let $F \subseteq \mathbb{N} \times \mathbb{N}$ be a non empty set containing exactly a sequence $(p_n, q_n)_{n \geq 1}$ such that $p_n \to + \infty$. Then, there exists a function $f \in H(\Omega)$ satisfying the following:

For every compact set $K \subseteq \mathbb{C} \setminus \overline{\Omega}$ with connected complement and for every function $h \in A(K)$, there exists a subsequence $(p_{k_n}, q_{k_n})_{n \geq 1}$ of the sequence $(p_n, q_n)_{n \geq 1}$ such that for every compact set $L \subseteq \Omega$ we have:
\begin{itemize}

\item[(1)]
$f \in D_{p_{k_n}, q_{k_n}}(\zeta)$ for every $\zeta \in L$ and for every $n \geq n(L)$ for an index $n(L) \in \mathbb{N}$

\item[(2)]
$\sup_{\zeta \in L} \sup_{z \in K} |S_{p_{k_n}}(f, \zeta)(z) - h(z)| \to 0$ as $n \to + \infty$

\item[(3)]
$\sup_{\zeta \in L} \sup_{z \in K} |[f; p_{k_n} / q_{k_n}]_{\zeta}(z) - h(z)| \to 0$ as $n \to + \infty$

\item[(4)]
$\sup_{\zeta \in L} \sup_{z \in L} |S_{p_{k_n}}(f, \zeta)(z) - f(z)| \to 0$ as $n \to + \infty$

\item[(5)]
$\sup_{\zeta \in L} \sup_{z \in L} |[f; p_{k_n} / q_{k_n}]_{\zeta}(z) - f(z)| \to 0$ as $n \to + \infty$

\end{itemize}  

Moreover, the set of all functions satisfying $(1) - (5)$ is dense and $G_\delta$ in $H(\Omega)$.

\end{theorem}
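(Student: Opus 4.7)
The plan is to mimic the reduction used in the proof of Theorem \ref{theorem 4.7}, with Theorem \ref{theorem 4.3} replaced by its Luh / Chui--Parnes analogue Theorem \ref{theorem 5.2}. Let $\mathcal{A}$ denote the set of $f \in H(\Omega)$ satisfying $(1)-(5)$ and let $\{L_k\}_{k \geq 1}$ be an exhausting family of compact subsets of $\Omega$ provided by Lemma \ref{lemma 4.2}. For every $k \geq 1$, Theorem \ref{theorem 5.2} applied with $L = L_k$ produces a dense $G_{\delta}$ subset $\mathcal{A}_k$ of $H(\Omega)$. By Baire's theorem the intersection $\bigcap_{k \geq 1} \mathcal{A}_k$ is again dense and $G_{\delta}$ in $H(\Omega)$, and one checks (exactly as in Theorem \ref{theorem 4.7}) that it coincides with $\mathcal{A}$.

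The only non-routine inclusion is ``$\bigcap_{k} \mathcal{A}_k \subseteq \mathcal{A}$'': given $f \in \bigcap_{k} \mathcal{A}_k$ one must exhibit a \emph{single} subsequence of $(p_n, q_n)_{n \geq 1}$ serving every compact $L \subseteq \Omega$, while $\bigcap_{k} \mathcal{A}_k$ a priori only supplies, for each $k$, a subsequence serving $L_k$. I would close this gap by a diagonal extraction. Fix $f \in \bigcap_{k} \mathcal{A}_k$, a compact set $K \subseteq \mathbb{C} \setminus \overline{\Omega}$ with connected complement and $h \in A(K)$. For every $n \geq 1$ apply Theorem \ref{theorem 5.2} with $L = L_n$ (using its condition $(4)$ with $J = L_n$) to pick $(p^{(n)}, q^{(n)}) \in F$ with $p^{(n)} > n$ such that $f \in D_{p^{(n)}, q^{(n)}}(\zeta)$ for every $\zeta \in L_n$ and each of the four suprema appearing in $(2)-(5)$ of Theorem \ref{theorem 5.4}, with $L$ replaced by $L_n$ and $(p_{k_n}, q_{k_n})$ by $(p^{(n)}, q^{(n)})$, is strictly less than $1/n$. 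Since $p^{(n)} > n \to +\infty$, the sequence $(p^{(n)}, q^{(n)})_{n \geq 1}$ forms, up to reindexing, a subsequence of $(p_n, q_n)_{n \geq 1}$. For any compact $L \subseteq \Omega$ fix $n_0$ with $L \subseteq L_{n_0}$; for $n \geq n_0$ the suprema over $L \times K$ and $L \times L$ are dominated by those over $L_n \times K$ and $L_n \times L_n$, and $(1)-(5)$ of Theorem \ref{theorem 5.4} follow along $(p^{(n)}, q^{(n)})_{n \geq 1}$.

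The main technical content is inherited from Theorem \ref{theorem 5.2}, whose proof, per the excerpt, is an essentially verbatim adaptation of Theorem \ref{theorem 4.3} with the family $\{K_m\}_{m \geq 1}$ of Lemma \ref{lemma 4.1} replaced by the one of Lemma \ref{lemma 5.1}. The genuinely new ingredient here is the diagonal extraction above; the hypothesis $p_n \to +\infty$ enters precisely in allowing the choice $p^{(n)} > n$ at every stage, which is what converts per-$L_k$ universality into a single universal subsequence valid for every compact $L \subseteq \Omega$. I expect this diagonal step, though conceptually straightforward, to be the main place where care is required.
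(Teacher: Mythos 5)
Your proposal is correct and follows essentially the same route as the paper, which omits the proof of Theorem \ref{theorem 5.4} as being analogous to Theorem \ref{theorem 4.7}: apply Theorem \ref{theorem 5.2} with $L = L_k$ over an exhaustion of $\Omega$ and intersect the resulting dense $G_{\delta}$ sets via Baire's theorem. Your diagonal extraction (choosing $(p^{(n)}, q^{(n)})$ with $p^{(n)} > n$ and all four suprema over $L_n$ below $1/n$) correctly fills in the single-subsequence step that the paper dismisses with ``one can verify.''
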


\begin{remark} \label{remark 5.5}

The universal functions in the sense of Luh and Chui - Parnes may be smooth on the boundary of $\Omega$. This is done in the following sections. We also mention that the universal functions in the sense of section $4$ can not be smooth on the boundary (\cite{dio}, \cite{tria}, \cite{tessera}, \cite{dekaepta}); thus the classes in section $4$ are strictly included in the corresponding classes of section $5$.

\end{remark}

\section{Universality in \boldmath{$A(\Omega)$}} %6

In this section we combine and strengthen the results of \cite{dekaepta} and \cite{ennia}. We recall the following well known lemmas.

\begin{lemma} \label{lemma 6.1} (\cite{dekatessera}, \cite{dekapente}, Same as Lemma \ref{lemma 5.1}) Let $\Omega \subseteq \mathbb{C}$ be a simply connected domain with $\overline{\Omega} \neq \mathbb{C}$. Then there exists a sequence of compact subsets $\{ K_m \}_{m \geq 1}$ of $\mathbb{C}$ with connected complements satisfying the following properties:  

\begin{itemize}

\item[(i)]
$K_m \cap \overline{\Omega} = \emptyset$ for every $m \geq 1$

\item[(ii)]
If $K \subseteq \mathbb{C}$ is a compact set with connected complement satisfying $K \cap \overline{\Omega} = \emptyset$, then there exists an index $m \in \mathbb{N}$ such that $K \subseteq K_m$

\end{itemize}

\end{lemma}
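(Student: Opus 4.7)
The statement is verbatim identical to Lemma~\ref{lemma 5.1} and is attributed to the same references \cite{dekatessera}, \cite{dekapente}; formally one can simply invoke the earlier lemma, but for completeness I outline the standard topological construction.

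I begin by building the candidate family. Because $\overline{\Omega}\neq\mathbb{C}$, the open set $U:=\mathbb{C}\setminus\overline{\Omega}$ is non-empty. I enumerate the countable collection of all finite unions
$$ F=\overline{D}(c_1,r_1)\cup\cdots\cup\overline{D}(c_N,r_N), $$
with centers $c_j\in\mathbb{Q}+i\mathbb{Q}$ and radii $r_j\in\mathbb{Q}_{+}$, subject to the two constraints $F\subseteq U$ and $\mathbb{C}\setminus F$ connected. Listing these finite unions as $K_1,K_2,\ldots$, property (i) is built into the construction.

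To verify property (ii), let $K\subseteq\mathbb{C}$ be compact with connected complement and $K\cap\overline{\Omega}=\emptyset$, so that $\delta:=d(K,\overline{\Omega})>0$. My goal is to produce some $K_m$ containing $K$. First I would thicken $K$ to the closed $\varepsilon$-neighborhood $K_{\varepsilon}=\{z:d(z,K)\leq\varepsilon\}$ with $\varepsilon\in(0,\delta/3)$ chosen small enough that $\mathbb{C}\setminus K_{\varepsilon}$ is still connected; this holds for all sufficiently small $\varepsilon$ by a standard compactness argument using that $\mathbb{C}\setminus K$ has no bounded component. Then I would cover $K$ by finitely many rational open disks whose closures lie in the interior of $K_{\varepsilon}$, obtaining a finite union $F\subseteq K_{\varepsilon}\subseteq U$ with $K\subseteq F$.

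The main obstacle is ensuring that $\mathbb{C}\setminus F$ is itself connected, so that $F$ qualifies as a member of the enumerated list. The unbounded component of $\mathbb{C}\setminus F$ contains the connected set $\mathbb{C}\setminus K_{\varepsilon}$, so every bounded component of $\mathbb{C}\setminus F$ lies inside $K_{\varepsilon}\subseteq U$. I would eliminate such bounded components by adjoining to $F$ finitely many further closed rational disks contained in $U$, each chosen to merge a bounded component with the unbounded one. Since a finite union of disks has only finitely many bounded complementary components, this adjunction procedure terminates and yields a set $F'$ belonging to the enumeration $\{K_m\}$ and satisfying $K\subseteq F'$, which completes the proof.
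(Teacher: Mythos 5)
The paper itself offers no proof of this lemma: it is quoted from Luh's papers \cite{dekatessera}, \cite{dekapente} and identified with Lemma \ref{lemma 5.1}, so your argument has to stand on its own. Its overall skeleton (enumerate finite unions of closed rational disks contained in $U=\mathbb{C}\setminus\overline{\Omega}$ with connected complement, then show any admissible $K$ sits inside one of them) is a reasonable plan, but the verification of (ii) — which is where all the content of the lemma lives — contains two genuine gaps. First, the claim that $\mathbb{C}\setminus K_{\varepsilon}$ is connected for all sufficiently small $\varepsilon$ is false for general compact $K$ with connected complement: connectedness of the complement is not stable under $\varepsilon$-thickening. Take $K$ to be the union of circles $C_m$ of radius $2^{-m}$ (centres accumulating at a point, which is adjoined to $K$), each with an open arc of length $8^{-m}$ removed. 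Then $\mathbb{C}\setminus K$ is connected (each disk interior escapes through its gap), but for \emph{every} $\varepsilon>0$ there is an $m$ with $8^{-m}\leq 2\varepsilon<2^{-m+1}$, so the $m$-th gap is sealed in $K_{\varepsilon}$ while the centre of $C_m$ is not covered; hence $\mathbb{C}\setminus K_{\varepsilon}$ has a bounded component for every $\varepsilon>0$. There is no "standard compactness argument" here; the distance from a point of $\mathbb{C}\setminus K$ to $K$ along an escaping path is not uniformly bounded below.

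Second, the repair step at the end is logically backwards: adjoining further closed disks to $F$ \emph{shrinks} $\mathbb{C}\setminus F$ and can never merge a bounded component of the complement with the unbounded one. The operation you actually need is to \emph{fill} the bounded components (replace $F$ by the union of $F$ with the bounded components of its complement), but then the resulting set is no longer a finite union of rational disks, so it does not belong to your enumerated family; and if you instead cover each hole by further disks you must check that no new holes are created and, more seriously, that every hole is contained in $U$ rather than merely in $K_{\varepsilon}$ — in the example above, $\overline{\Omega}$ may well intersect the interior of one of the larger sealed circles, so a hole of a cover of $K$ need not avoid $\overline{\Omega}$. The known proofs (Luh \cite{dekatessera}, \cite{dekapente}; cf.\ the analogous Lemma \ref{lemma 4.1} in \cite{tria}) avoid both problems by building $K_m$ from the complement of a thickened polygonal arc with rational vertices joining $\overline{\Omega}$ to $\infty$ inside a large closed disk, parametrised over a countable set of arcs and thicknesses; given $K$, one uses the connectedness of $\tilde{\mathbb{C}}\setminus K$ to choose such an arc avoiding $K$, rather than trying to control the geometry of an $\varepsilon$-neighbourhood of $K$. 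As written, your argument does not establish property (ii).
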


\begin{lemma} \label{lemma 6.2}

Let $\Omega$ be an open set in $\mathbb{C}$ such that $\tilde{\mathbb{C}} \setminus \overline{\Omega}$ is connected. Then there exists a sequence $\{ L_k \}_{k \geq 1}$ of compact subsets of $\overline{\Omega}$ with connected complements such that:

\begin{itemize}

\item[(1)]
$L_k \subseteq L_{k + 1}$ for every $k \in \mathbb{N}$.

\item[(2)]
For every compact set $L \subseteq \overline{\Omega}$ there exists a $k \in \mathbb{N}$ such that $L \subseteq L_k$.  
\end{itemize}

For the proof of Lemma \ref{lemma 6.2} it suffices to set $L_k = \overline{\Omega} \cap \overline{B}(0, k)$ for every $k \in \mathbb{N}$.

\end{lemma}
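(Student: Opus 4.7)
The author has already supplied the candidate sequence: take
$L_k = \overline{\Omega} \cap \overline{B}(0, k)$ for every $k \geq 1$. My plan is to verify in turn that this choice satisfies all the required conditions, the only non-trivial one being the connectedness of the complements.

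First I would check the easy book-keeping. Each $L_k$ is the intersection of the closed set $\overline{\Omega}$ with the closed ball $\overline{B}(0,k)$, hence closed; it is bounded, so it is a compact subset of $\overline{\Omega}$. Property (1) is immediate from $\overline{B}(0,k) \subseteq \overline{B}(0,k+1)$. For property (2), any compact $L \subseteq \overline{\Omega}$ is bounded, so some $k$ satisfies $L \subseteq \overline{B}(0,k)$, which combined with $L \subseteq \overline{\Omega}$ yields $L \subseteq L_k$.

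The heart of the matter is that each $L_k$ has connected complement in $\mathbb{C}$ (equivalently, in $\tilde{\mathbb{C}}$, since $L_k$ is compact and the unique unbounded component of $\mathbb{C}\setminus L_k$ corresponds to the component of $\tilde{\mathbb{C}}\setminus L_k$ containing $\infty$). I compute
\[
\tilde{\mathbb{C}} \setminus L_k \;=\; \tilde{\mathbb{C}} \setminus \bigl(\overline{\Omega} \cap \overline{B}(0,k)\bigr) \;=\; \bigl(\tilde{\mathbb{C}} \setminus \overline{\Omega}\bigr) \;\cup\; \bigl(\tilde{\mathbb{C}} \setminus \overline{B}(0,k)\bigr).
\]
By hypothesis the first set is connected, and it contains $\infty$. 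The second set $\{z \in \mathbb{C} : |z| > k\} \cup \{\infty\}$ is an open ``punctured neighbourhood'' of $\infty$ in $\tilde{\mathbb{C}}$ and is manifestly connected, and it also contains $\infty$. Since both pieces are connected and share the point $\infty$, their union is connected. Thus $\tilde{\mathbb{C}} \setminus L_k$ is connected, and hence so is $\mathbb{C} \setminus L_k$.

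The only step that really uses the hypothesis $\tilde{\mathbb{C}} \setminus \overline{\Omega}$ connected is the last one, so there is no serious obstacle; the lemma is essentially a one-line verification once the correct exhausting family is written down.
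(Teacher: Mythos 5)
Your proof is correct and uses exactly the family $L_k = \overline{\Omega} \cap \overline{B}(0,k)$ that the paper itself proposes; the paper offers no further justification, so your verification (De Morgan's law giving $\tilde{\mathbb{C}} \setminus L_k$ as a union of two connected sets both containing $\infty$, plus the standard equivalence between connectedness of the complement in $\mathbb{C}$ and in $\tilde{\mathbb{C}}$ for compact sets) simply supplies the details the author left to the reader.
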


We recall that $A(\Omega)$ contains exactly all functions $f: \overline{\Omega} \to \mathbb{C}$ which are continuous on $\overline{\Omega}$ and holomorphic on $\Omega$. The topology of $A(\Omega)$ is is defined by the seminorms $p_n(f) = \sup_{z \in \overline{\Omega} \cap \overline{B}(0, n)} |f(z)|$ for every $n \geq 1$. Thus, $A(\Omega)$ is a Fr\'{e}chet space and Baire's theorem is at our disposal.

We now present the main theorem of this section.

\begin{theorem} \label{theorem 6.3}

Let $\Omega \subseteq \mathbb{C}$ be a simply connected domain, such that $\overline{\Omega}^o = \Omega$, $\tilde{\mathbb{C}} \setminus \overline{\Omega}$ is connected and let $L \subseteq \Omega$ be a compact set. Also, let $F \subseteq \mathbb{N} \times \mathbb{N}$ be a non empty set containing exactly a sequence $(p_n, q_n)_{n \geq 1}$ such that $p_n \to + \infty$. Then, there exists a function $f \in A(\Omega)$ satisfying the following:

For every compact set $K$ with connected complement such that $K \cap \overline{\Omega} = \emptyset$ and for every $h \in A(K)$, there exists a subsequence $(p_{k_n}, q_{k_n})_{n \geq 1}$ of the sequence $(p_n, q_n)_{n \geq 1}$ such that:
\begin{itemize}

\item[(1)]
$f \in D_{p_{k_n}, q_{k_n}}(\zeta)$ for every $n \in \mathbb{N}$ and for every $\zeta \in L$

\item[(2)]
$\sup_{\zeta \in L} \sup_{z \in K} |S_{p_{k_n}}(f, \zeta)(z) - h(z)| \to 0$ as $n \to + \infty$

\item[(3)]
$\sup_{\zeta \in L} \sup_{z \in K} |[f; p_{k_n} / q_{k_n}]_{\zeta}(z) - h(z)| \to 0$ as $n \to + \infty$

\item[(4)]
For every compact set $J \subseteq \overline{\Omega}$ it holds:
$$ \sup_{\zeta \in L} \sup_{z \in J} |S_{p_{k_n}}(f, \zeta)(z) - f(z)| \to 0 \; \text{as} \; n \to + \infty $$

\item[(5)]
For every compact set $J \subseteq \overline{\Omega}$ it holds:
$$ \sup_{\zeta \in L} \sup_{z \in J} |[f; p_{k_n} / q_{k_n}]_{\zeta}(z) - f(z)| \to 0 \; \text{as} \; n \to + \infty $$

\end{itemize}  

Moreover, the set of all functions satisfying $(1) - (5)$ is dense and $G_\delta$ in $A(\Omega)$.

\end{theorem}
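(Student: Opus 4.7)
My plan is to imitate the four-set structure of the proof of Theorem \ref{theorem 4.3} in the new Fréchet space $A(\Omega)$, feeding it Lemma \ref{lemma 6.1} (for the off-$\overline{\Omega}$ compacta $\{K_m\}_{m\ge 1}$) and Lemma \ref{lemma 6.2} (for an exhausting family $\{L_k\}_{k\ge 1}$ of $\overline{\Omega}$ whose members already have connected complements). I would enumerate the polynomials $\{f_j\}_{j\ge 1}$ with coefficients in $\mathbb{Q}+i\mathbb{Q}$ and, for each $(p,q)\in F$ and all $j,m,k,s\ge 1$, define the four sets $A(K_m,p,j,s)$, $B(K_m,p,q,j,s)$, $C(L_k,p,s)$, $D(L_k,p,q,s)$ exactly as in Theorem \ref{theorem 4.3} but now as subsets of $A(\Omega)$. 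A standard application of Mergelyan's theorem then shows that the class $\mathcal{X}$ of functions satisfying $(1)$--$(5)$ equals $\bigcap_{j,m,k,s\ge 1}\bigcup_{(p,q)\in F}A(K_m,p,j,s)\cap B(K_m,p,q,j,s)\cap C(L_k,p,s)\cap D(L_k,p,q,s)$. Since $A(\Omega)$ is a complete metrizable space, Baire's theorem reduces everything to openness of each of the four families and density of each union $\bigcup_{(p,q)\in F}A\cap B\cap C\cap D$.

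For openness I expect the arguments of subsections $4.1$--$4.4$ to transfer almost verbatim. The one new observation is that, since $L\subseteq\Omega$ is compact with $d=d(L,\mathbb{C}\setminus\Omega)>0$, a finite cover $\bigcup_{n=1}^{N}\overline{B}(\zeta_n,d/2)\subseteq\Omega$ can be swallowed by some $L_{n_0}$, and convergence in $A(\Omega)$ forces uniform convergence on $L_{n_0}$. Cauchy estimates then give uniform control of the first $p+q+1$ Taylor coefficients at centers $\zeta\in L$, which powers the Hankel-determinant and Jacobi-formula arguments driving the $B$- and $D$-cases. The extra term $\sup_{z\in L_k}|f(z)-g(z)|$ appearing in the $C$- and $D$-cases is also controlled by convergence in $A(\Omega)$ because $L_k\subseteq\overline{\Omega}$.

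The real content of the proof is the density step. Fixing $f\in A(\Omega)$, a compact $L_1\subseteq\overline{\Omega}$, and $\varepsilon>0$, I would pick $L_{n_0}$ from Lemma \ref{lemma 6.2} containing $L_1\cup L_k\cup L$; by that lemma $L_{n_0}$ has connected complement, and the hypothesis $\overline{\Omega}^o=\Omega$ forces $L_{n_0}^o\subseteq\Omega$, so $f|_{L_{n_0}}\in A(L_{n_0})$. I expect the main obstacle to be the purely topological fact that $K_m\cup L_{n_0}$ still has connected complement in $\tilde{\mathbb{C}}$; this is precisely where the assumption that $\tilde{\mathbb{C}}\setminus\overline{\Omega}$ is connected becomes essential, the idea being that $\tilde{\mathbb{C}}\setminus L_{n_0}$ is a connected open set containing all of $\tilde{\mathbb{C}}\setminus\overline{\Omega}$ (and hence $K_m$ and $\infty$), and removing the polynomially convex $K_m$ from it cannot disconnect it because $K_m$ lives inside the single connected piece $\tilde{\mathbb{C}}\setminus\overline{\Omega}$.

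Once connectedness of $\tilde{\mathbb{C}}\setminus(K_m\cup L_{n_0})$ is secured, I would define $w$ to be $f_j$ on $K_m$ and $f$ on $L_{n_0}$, obtaining $w\in A(K_m\cup L_{n_0})$; Mergelyan's theorem then produces a polynomial $P$ approximating $w$ uniformly on $K_m\cup L_{n_0}$, I choose $(p_{k_n},q_{k_n})\in F$ with $p_{k_n}>\deg P$, and the polynomial $u(z)=P(z)+dz^{p_{k_n}}$ with $|d|>0$ sufficiently small satisfies $u=[u;p_{k_n}/q_{k_n}]_{\zeta}=S_{p_{k_n}}(u,\zeta)$ for every $\zeta\in\mathbb{C}$ (Proposition \ref{proposition 2.4}). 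Verifying that $u$ belongs to all four families with parameters $(p_{k_n},q_{k_n})$ is then identical to subsection $4.5$, and Baire's theorem delivers that $\mathcal{X}$ is dense and $G_\delta$ in $A(\Omega)$.
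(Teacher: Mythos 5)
Your proposal follows the paper's proof of Theorem \ref{theorem 6.3} essentially step for step: the same four families $A, B, C, D$ built from Lemma \ref{lemma 6.1} and Lemma \ref{lemma 6.2}, the same reduction via Baire's theorem in the Fr\'{e}chet space $A(\Omega)$, the same openness arguments transferred from Section 4 (Cauchy estimates, Hankel determinants, Jacobi formulas), and the same density step via Mergelyan applied to $w$ on $K_m \cup L_{k_0}$ followed by the perturbation $u = P + dz^{p_{k_n}}$ and Proposition \ref{proposition 2.4}. The only difference is cosmetic: you supply a short justification (essentially unicoherence of the sphere) for the connectedness of $\tilde{\mathbb{C}} \setminus (K_m \cup L_{k_0})$, which the paper simply asserts.
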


\begin{proof} Let us first consider an enumeration $\{ f_j \}_{j \geq 1}$ of polynomials with coefficients in $\mathbb{Q} + i\mathbb{Q}$.

Now, for every $(p, q) \in F$ and for every $j, s, m, k \geq 1$ we consider the following sets:
$$ A(K_m, p, j, s) = \{ f \in A(\Omega) : \sup_{\zeta \in L} \sup_{z \in K_m} |S_p(f,\zeta)(z) - f_j(z)| < \frac{1}{s}  \} $$
$$ B(K_m, p, q, j, s) = \{ f \in A(\Omega) : \; \text{for every} \; \zeta \in L \; \text{it holds} \; f \in D_{p,q}(\zeta) $$
$$ \text{and} \; \sup_{\zeta \in L} \sup_{z \in K_m} |[f; p / q]_{\zeta}(z) - f_j(z)| < \frac{1}{s} \} $$
$$ C(L_k, p, s) = \{ f \in A(\Omega) : \sup_{\zeta \in L} \sup_{z \in L_k} |S_p(f, \zeta)(z) - f(z)| < \frac{1}{s} \} $$
$$ D(L_k, p, q, s) = \{ f \in A(\Omega) : \; \text{for every} \; \zeta \in L \; \text{it holds} \; f \in D_{p,q}(\zeta) $$
$$ \text{and} \; \sup_{\zeta \in L} \sup_{z \in L_k} |[f; p / q]_{\zeta}(z) - f(z)| < \frac{1}{s} \} $$

One can verify (by using Mergelyan's theorem) that the set of all functions satisfying $(1) - (5)$ is precisely the following class:

$$ \mathcal{Y} = \bigcap_{j, m, k, s \geq 1} 
\bigcup_{(p, q) \in F} A(K_m, p, j, s) \cap B(K_m, p, q, j, s) \cap C(L_k, p, s) \cap  D(L_k, p, q, s) $$

So, according to Baire's theorem it is enough to prove the following:

\begin{claim} \label{claim 6.4}
\sloppy{The sets $A(K_m, p, j, s)$, $B(K_m, p, q, j, s)$, $C(L_k, p, s)$ and} $D(L_k, p, q, s)$ are open subsets of $A(\Omega)$ for every $(p, q) \in F$ and for every $m, k, j, s \geq 1$.
\end{claim}

\begin{claim} \label{claim 6.5}
The set:
$$ \mathcal{Y}(m, k, j, s) = \bigcup_{(p, q) \in F} A(K_m, p, j, s) \cap B(K_m, p, q, j, s) \cap C(L_k, p, s) \cap  D(L_k, p, q, s) $$ 

is a dense subset of $A(\Omega)$ for every $m, k, j, s \geq 1$.
\end{claim}

Since $A(\Omega)$ is a Fr\'{e}chet space, Baire's theorem implies that $\mathcal{Y}$ is a dense and $G_{\delta}$ subset of $A(\Omega)$.

For $q = 0$ we automatically have that for every function $f \in A(\Omega)$ it is $f \in D_{p, 0}(\zeta)$ for every $\zeta \in L$ and for every $p \geq 0$; in that case $[f; p / 0]_{\zeta} = S_p(f, \zeta)$. So, we restrict our attention to the case $q \geq 1$.

\subsection{The case of \boldmath{$A(K_m, p, j, s)$.}} %6

We fix the parameters $(p, q) \in F$ and let $m, j, s \geq 1$. Let also $f \in A(K_m, p, j, s)$. We want to find a finite set $\emptyset \neq \Delta \subseteq \mathbb{N}^*$ and $\varepsilon > 0$ such that for every $g \in A(\Omega)$ with $g \in B_{\Delta}(f, \varepsilon)$ we obtain that $g \in A(K_m, p, j, s)$. Here:
$$ B_{\Delta}(f, \varepsilon) = \{ g \in A(\Omega) : \sup_{z \in L_k} |f(z) - g(z)| < \varepsilon \; \text{for every} \; k \in \Delta \}. $$

Since $B_{\{ max \; \Delta \}}(f, \varepsilon) = B_{\Delta}(f, \varepsilon)$, it suffices to consider $\Delta$ to be a singleton.

We know that $L$ is a compact set and thus the distance $d \equiv d(L, \mathbb{C} \setminus \Omega)$ is strictly positive. Hence, for every $\zeta \in L$ it is $B(\zeta, \frac{d}{2}) \subseteq \overline{B}(\zeta, \frac{d}{2}) \subseteq \Omega$. By compactness we can find an index $N \in \mathbb{N}$ and $\zeta_1, \cdots, \zeta_N \in L$ such that:
$$ L \subseteq \bigcup_{n = 1}^{N} B(\zeta_n, \frac{d}{2}) \subseteq \bigcup_{n = 1}^{N} \overline{B}(\zeta_n, \frac{d}{2}) \subseteq \Omega $$

From Lemma \ref{lemma 6.2} there exists an index $n_0 \in \mathbb{N}$ with $n_0 > p + 1$ such that $\bigcup_{n = 1}^{N} \overline{B}(\zeta_n, \frac{d}{2}) \subseteq L_{n_0}$ while at the same time the quantity $\sup_{\zeta\in L_{n_0}} |f(\zeta) - g(\zeta)|$ is small enough (provided that $\varepsilon > 0$ is also small).

By the Cauchy estimates, the first $p + 1$ Taylor coefficients of $g$ with center $\zeta \in L$ are uniformly close one by one to the corresponding Taylor coefficients of $f$. By the triangle inequality, we have:
$$ \sup_{\zeta \in L} \sup_{z \in K_m} |S_p(g, \zeta)(z) - f_j(z)| \} \leq $$
$$ \leq \sup_{\zeta \in L} \sup_{z \in K_m} |S_p(g, \zeta)(z) - S_p(f, \zeta)(z)| + \sup_{\zeta \in L} \sup_{z \in K_m} |S_p(f, \zeta)(z) - f_j(z)| \;\;\; (6.1.1)$$

Since $\frac{1}{s} - \sup_{\zeta \in L} \sup_{z \in K_m} |S_p(f, \zeta)(z) - f_j(z)| > 0$, we can obtain for $\varepsilon > 0$ small enough the following relation:
$$ \sup_{\zeta \in L} \sup_{z \in K_m} |S_p(g, \zeta)(z) - S_p(f, \zeta)(z)| < \frac{1}{s} - \sup_{\zeta \in L} \sup_{z \in K_m} |S_p(f, \zeta)(z) - f_j(z)| \;\;\; (6.1.2) $$ 

By combining relations $(6.1.1)$ and $(6.1.2)$ we aquire: 
$$ \sup_{\zeta \in L} \sup_{z \in K_m} |S_p(g, \zeta)(z) - f_j(z)| < \frac{1}{s} $$

It follows that $g \in A(K_m, p, j, s)$ and so $A(K_m, p, j, s)$ is open in $A(\Omega)$.

\subsection{The case of \boldmath{$B(K_m, p, q, j, s)$.}} %6

\sloppy{We fix the parameters $(p, q) \in F$ and let $m, j, s \geq 1$. Let also $f \in B(K_m, p, q, j, s)$.} We want to find a finite set $\emptyset \neq \Delta \subseteq \mathbb{N}^*$ and $\varepsilon > 0$ such that for every $g \in A(\Omega)$ with $g \in B_{\Delta}(f, \varepsilon)$ we obtain that $g \in B(K_m, p, q, j, s)$. Once again, it suffices to consider $\Delta$ to be a singleton.

Since $f \in B(K_m, p, q, j, s)$ we have that $f \in D_{p, q}(\zeta)$ for every $\zeta \in L$. Moreover, the Hankel determinant $D_{p, q}(f, \zeta)$ for $f$ is not equal to $0$ and that holds for every $\zeta \in L$. By continuity, there exists a $\delta > 0$ such that:
$$ |D_{p, q}(f, \zeta)| > \delta \; \text{for every} \; \zeta \in L $$

Suppose that the first $p + q + 1$ Taylor coefficients of $g$ are uniformly close enough one by one to the corresponding Taylor coefficients of $f$, provided that $\varepsilon > 0$ is small. Again by continuity, we obtain:
$$ |D_{p, q}(g, \zeta)| > \frac{\delta}{2} \; \text{for every} \; \zeta \in L $$

The last relation implies that $g \in D_{p, q}(\zeta)$ for every $\zeta \in L$. By the triangle inequality, we have:
$$ \sup_{\zeta \in L} \sup_{z \in K_m} |[g; p / q]_{\zeta}(z) - f_j(z)| \leq $$
$$ \leq \sup_{\zeta \in L} \sup_{z \in K_m} |[g; p / q]_{\zeta}(z) - [f; p / q]_{\zeta}(z)| + \sup_{\zeta \in L} \sup_{z \in K_m} |[f; p / q]_{\zeta}(z) - f_j(z)| \;\;\; (6.2.1) $$

We recall the Pad\'{e} approximants:
$$ [f; p / q]_{\zeta}(z) = \frac{A(f, \zeta)(z)}{B(f, \zeta)(z)} $$

and
$$ [g; p / q]_{\zeta}(z) = \frac{A(g, \zeta)(z)}{B(g, \zeta)(z)} $$

where the polynomials $A(f, \zeta)(z)$, $B(f, \zeta)(z)$, $A(g, \zeta)(z)$ and $B(g, \zeta)(z)$ are given by the Jacobi formulas and their coefficients vary continuously on $\zeta \in L$. Since $\sup_{\zeta \in L} \sup_{z \in K_m} |[f; p / q]_{\zeta}(z) - f_j(z)| < \frac{1}{s}$ we have that the function $[f; p / q]_{\zeta}(z)$ takes only finite values for every $z \in K_m$ and for every $\zeta \in L$. Thus, $B(f, \zeta)(z) \neq 0$ for every $\zeta \in L$ and for every $z \in K_m$. By continuity, one obtains that there exists a $\delta' >0$ such that:
$$ |B(f, \zeta)(z)| \geq \delta' \; \text{for every} \; \zeta \in L \; \text{and for every} \; z \in K_m $$ 
 
Since the first $p + q + 1$ Taylor coefficients of $f$ are uniformly close enough one by one to those of $g$ (provided that $\varepsilon > 0$ is small), one obtains:
$$ |B(g, \zeta)(z)| \geq \frac{\delta'}{2} \; \text{for every} \; \zeta \in L \; \text{and for every} \; z \in K_m $$ 

By the triangle inequality, it is:
$$ |[g; p / q]_{\zeta}(z) - [f; p / q]_{\zeta}(z)| = |\frac{A(f, \zeta)(z)}{B(f, \zeta)(z)} - \frac{A(g, \zeta)(z)}{B(g, \zeta)(z)}| = $$
$$ = |\frac{A(f, \zeta)(z)B(g, \zeta)(z) - A(g, \zeta)(z)B(f, \zeta)(z)}{B(f, \zeta)(z)B(g, \zeta)(z)}| \leq $$
$$ \leq \frac{2}{(\delta')^2}|A(f, \zeta)(z)B(g, \zeta)(z) - A(g, \zeta)(z)B(f, \zeta)(z)| \leq $$
$$ \leq \frac{2}{(\delta')^2}|A(f, \zeta)(z)||B(f, \zeta)(z) - B(g, \zeta)(z)| + $$
$$ + \frac{2}{(\delta')^2}|B(f, \zeta)(z)||A(f, \zeta)(z) - A(g, \zeta)(z)| $$

Thus, provided that $\varepsilon > 0$ is small, we obtain: 
$$ \sup_{\zeta \in L} \sup_{z \in K_m} |[g; p / q]_{\zeta}(z) - [f; p / q]_{\zeta}(z)| < $$
$$ < \frac{1}{s} - \sup_{\zeta \in L} \sup_{z \in K_m} |[f; p / q]_{\zeta}(z) - f_j(z)| \;\;\; (6.2.2) $$

By combining relations $(6.2.1)$ and $(6.2.2)$ we acquire: 
$$ \sup_{\zeta \in L} \sup_{z \in K_m} |[g; p / q]_{\zeta}(z) - f_j(z)| < \frac{1}{s} $$

It follows that $g \in B(K_m, p, q, j, s)$ and so $B(K_m, p, q, j, s)$ is open in $A(\Omega)$.

\subsection{The case of \boldmath{$C(L_k, p, s)$.}} %6

We fix the parameters $(p, q) \in F$ and let $k, s \geq 1$. Let also $f \in C(L_k, p, s)$. We want to find a finite set $\emptyset \neq \Delta \subseteq \mathbb{N}^*$ and $\varepsilon > 0$ such that for every $g \in A(\Omega)$ with $g \in B_{\Delta}(f, \varepsilon)$ we obtain that $g \in C(L_k, p, s)$. The proof is similar to the one of subsection $6.1$ except from the following difference: 
$$ \sup_{\zeta \in L} \sup_{z \in L_k} |S_p(g, \zeta)(z) - g(z)| \leq \sup_{\zeta \in L} \sup_{z \in L_k} |S_p(g, \zeta)(z) - S_p(f, \zeta)(z)| + $$
$$ +\sup_{\zeta \in L} \sup_{z \in L_k} |S_p(f, \zeta)(z) - f(z)| + \sup_{z \in L_k} |f(z) - g(z)| \;\;\; (6.3.1) $$ 

If $\varepsilon > 0$ is small enough, one obtains that the quantity $\sup_{z \in L_k} |f(z) - g(z)|$ is also small. By the Cauchy estimates, we can achieve the following: 
$$ \sup_{\zeta \in L} \sup_{z \in L_k} |S_p(g, \zeta)(z) - S_p(f, \zeta)(z)| < \frac{1}{s} - \sup_{\zeta \in L} \sup_{z \in L_k} |S_p(f, \zeta)(z) - f(z)| - $$
$$ - \sup_{z \in L_k} |f(z) - g(z)| \;\;\; (6.3.2) $$

for $\varepsilon > 0$ small. By combining relations $(6.3.1)$ and $(6.3.2)$ we obtain: 
$$ \sup_{\zeta \in L} \sup_{z \in L_k} |S_p(g, \zeta)(z) - g(z)| < \frac{1}{s}. $$

It follows that $g \in C(L_k, p, s)$ and so $C(L_k, p, s)$ is open in $A(\Omega)$.

\subsection{The case of \boldmath{$D(L_k, p, q, s)$.}} %6

We fix the parameters $(p, q) \in F$ and let $k, s \geq 1$. Let also $f \in D(L_k, p, q, s)$. We want to find a finite set $\emptyset \neq \Delta \subseteq \mathbb{N}^*$ and $\varepsilon > 0$ such that for every $g \in A(\Omega)$ with $g \in B_{\Delta}(f, \varepsilon)$ we obtain that $g \in D(L_k, p, q, s)$. The proof is similar to the one of subsection $6.2$ with a few differences.

Since $f \in D(L_k, p, q, s)$ we have that $f \in D_{p, q}(\zeta)$ for every $\zeta \in L$. It follows that $g \in D_{p, q}(\zeta)$ for every $\zeta \in L$ in the same way as we did in $(6.2)$.

By the triangle inequality, we have:
$$ \sup_{\zeta \in L} \sup_{z \in L_k} |[g; p / q]_{\zeta}(z) - g(z)| \leq \sup_{\zeta \in L} \sup_{z \in L_k} |[g; p / q]_{\zeta}(z) - [f; p / q]_{\zeta}(z)| + $$
$$ + \sup_{\zeta \in L} \sup_{z \in L_k} |[f; p / q]_{\zeta}(z) - f(z)| + \sup_{z \in L_k} |f(z) - g(z)| \;\;\; (6.4.1) $$

As we did in subsection $6.2$, for $\varepsilon > 0$ small enough, one obtains that the quantity $\sup_{z \in L_k} |f(z) - g(z)|$ is arbitrary small, while at the same time the quantity $\sup_{\zeta \in L} \sup_{z \in L_k} |[g; p / q]_{\zeta}(z) - [f; p / q]_{\zeta}(z)|$ can become strictly less than $\frac{1}{s} - \sup_{\zeta \in L} \sup_{z \in L_k} |[f; p / q]_{\zeta}(z) - f(z)| > 0$, i. e.:
$$ \sup_{\zeta \in L} \sup_{z \in L_k} |[g; p / q]_{\zeta}(z) - [f; p / q]_{\zeta}(z)| < \frac{1}{s} - \sup_{\zeta \in L} \sup_{z \in L_k} |[f; p / q]_{\zeta}(z) - f(z)| \;\;\;(6.4.2) $$

The desired relation ($\sup_{z \in L_k} |[g; p / q]_{\zeta}(z) - g(z)| < \frac{1}{s}$) follows by combining relations $(6.4.1)$ and $(6.4.2)$ and by using the triangle inequality.

It follows that $g \in D(L_k, p, q, s)$ and so $D(L_k, p, q, s)$ is open in $A(\Omega)$. The proof of Claim \ref{claim 6.4} is complete.

\subsection{Density of \boldmath{$\mathcal{Y}(m, k, j, s)$.}} %6

In order to prove Claim \ref{claim 6.5} we fix the parameters $m, k, j, s \geq 1$ and we want to prove that the set:
$$ \mathcal{Y}(m, k, j, s) = $$
$$ = \bigcup_{(p, q) \in F} A(K_m, p, j, s) \cap B(K_m, p, q, j, s) \cap C(L_k, p, s) \cap  D(L_k, p, q, s) $$ 

is a dense subset of $A(\Omega)$. 

Let us consider a function $f \in A(\Omega)$, $\emptyset \neq \Delta \subseteq \mathbb{N}^*$ a finite set and $\varepsilon > 0$. We are looking for a function $g \in \mathcal{Y}(m, k, j, s)$ such that $g \in B_{\Delta}(f, \varepsilon)$. If $N = max \; \Delta$, it suffices to find a $g \in \mathcal{Y}(m, k, j, s)$ satisfying $p_{N}(f - g) < \varepsilon$ (since $p_N(h) = \sup_{\overline{\Omega} \cap \overline{B}(0, N)}|h(z)|$ for every function $h \in A(\Omega)$ and $B_{\Delta}(f, \varepsilon) = B_{max \{ \Delta \}}(f, \varepsilon)$).

We notice that $L_k \cup (\overline{\Omega} \cap \overline{B}(0, N))$ is a compact subset of $\overline{\Omega}$, so according to Lemma \ref{lemma 6.2}, there exists a $k_0 > k$ such that $L_k \cup (\overline{\Omega} \cap \overline{B}(0, N)) \subseteq L_{k_0}$. The compact sets $L_{k_0}$ and $K_m$ are disjoint with connected complements and thus the set $L_{k_0} \cup K_m$ is also a compact one with connected complement. 

One can apply Mergelyan's theorem for the following function:
$$ w(z) = \begin{cases} f_j(z), \;\; \mbox{if} \;\; z \in K_m 
\\ 
f(z),\;\; \mbox{if} \;\; z \in L_{k_0} \end{cases} $$

The function $w$ is well defined (because $K_m \cap L_{k_0} = \emptyset$) and also $w \in A(K_m \cup L_{k_0})$ since $\overline{\Omega}^o = \Omega$ and $f$ is holomorphic in ${L_{k_0}}^o$ for every $f \in A(\Omega)$.

The function $w$ can be uniformly approximated on $L_{k_0} \cup K_m$ by a polynomial $P$. Our assumption on $F$ allows us to find an index $k_n \in \mathbb{N}$ such that $(p_{k_n}, q_{k_n}) \in F$ and $p_{k_n} > deg(P(z))$. Then, the function $u(z) = P(z) + dz^{p_{k_n}}$, where $d \in \mathbb{C} \setminus \{ 0 \}$ and $0 < |d|$ is small enough, is clearly a polynomial that is uniformly close to the function $w(z)$ on $L_{k_0} \cup K_m$.

Moreover, according to Proposition \ref{proposition 2.4} we have that $u \in D_{p_{k_n}, q_{k_n}}(\zeta)$ and $[u; p_{k_n} / q_{k_n}]_{\zeta}(z) = u(z)$ for every $\zeta, z \in \mathbb{C}$; in particular for every $z \in L_{k_0} \cup K_m$. 

Now, one can easily check that the fuction $u$ not only satisfies $p_N(f - u) < \varepsilon$, but also that $u \in \mathcal{Y}(m, k, j, s)$, provided that $0 < |d|$ is small enough.

The proof of Claim \ref{claim 6.5} is complete. The result follows from Baire's theorem.

\end{proof}

\begin{theorem} \label{theorem 6.6}

Let $\Omega \subseteq \mathbb{C}$ be a simply connected domain, such that $\overline{\Omega}^o = \Omega$, $\tilde{\mathbb{C}} \setminus \overline{\Omega}$ is connected and let $\zeta \in \Omega$ be a fixed element. Also, let $F \subseteq \mathbb{N} \times \mathbb{N}$ be a non empty set containing exactly a sequence $(p_n, q_n)_{n \geq 1}$ such that $p_n \to + \infty$. Then, there exists a function $f \in A(\Omega)$ satisfying the following:

For every compact set $K \subseteq \mathbb{C} \setminus \overline{\Omega}$ with connected complement and for every function $h \in A(K)$, there exists a subsequence $(p_{k_n}, q_{k_n})_{n \geq 1}$ of the sequence $(p_n, q_n)_{n \geq 1}$ such that:

\begin{itemize}

\item[(1)]
$f \in D_{p_{k_n}, q_{k_n}}(\zeta)$ for every $n \in \mathbb{N}$

\item[(2)]
$\sup_{z \in K} |S_{p_{k_n}}(f, \zeta)(z) - h(z)| \to 0$ as $n \to + \infty$

\item[(3)]
$\sup_{z \in K} |[f; p_{k_n} / q_{k_n}]_{\zeta}(z) - h(z)| \to 0$ as $n \to + \infty$

\item[(4)]
For every compact set $J \subseteq \overline{\Omega}$ it holds:
$$ \sup_{z \in J} |S_{p_{k_n}}(f, \zeta)(z) - f(z)| \to 0 \; \text{as} \; n \to + \infty $$

\item[(5)]
For every compact set $J \subseteq \overline{\Omega}$ it holds:
$$ \sup_{z \in J} |[f; p_{k_n} / q_{k_n}]_{\zeta}(z) - f(z)| \to 0 \; \text{as} \; n \to + \infty $$

\end{itemize}  

Moreover, the set of all functions satisfying $(1) - (5)$ is dense and $G_\delta$ in $A(\Omega)$.

\end{theorem}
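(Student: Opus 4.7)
The plan is to derive Theorem \ref{theorem 6.6} as an immediate specialization of Theorem \ref{theorem 6.3}, in complete analogy with how Theorem \ref{theorem 4.6} was obtained from Theorem \ref{theorem 4.3}. Concretely, I would apply Theorem \ref{theorem 6.3} with the compact set $L$ taken to be the singleton $\{\zeta\}$. Since $\zeta \in \Omega$, the singleton $\{\zeta\}$ is a compact subset of $\Omega$, so all the hypotheses of Theorem \ref{theorem 6.3} are satisfied (the domain $\Omega$ is simply connected, $\overline{\Omega}^o = \Omega$, and $\tilde{\mathbb{C}} \setminus \overline{\Omega}$ is connected).

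Once this specialization is made, conditions $(1)$-$(5)$ of Theorem \ref{theorem 6.3} automatically reduce to conditions $(1)$-$(5)$ of Theorem \ref{theorem 6.6}: the outer supremum $\sup_{\zeta' \in L}$ over the singleton $L = \{\zeta\}$ is vacuous and disappears, leaving exactly the one-center statements in terms of $S_{p_{k_n}}(f, \zeta)(\cdot)$ and $[f; p_{k_n}/q_{k_n}]_{\zeta}(\cdot)$. The requirement "$f \in D_{p_{k_n}, q_{k_n}}(\zeta')$ for every $\zeta' \in L$" becomes "$f \in D_{p_{k_n}, q_{k_n}}(\zeta)$", as stated in $(1)$.

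For the genericity conclusion, Theorem \ref{theorem 6.3} already asserts that the set of functions satisfying $(1)$-$(5)$ is dense and $G_\delta$ in $A(\Omega)$; this is exactly what is claimed in Theorem \ref{theorem 6.6}, since the class of functions satisfying Theorem \ref{theorem 6.6} for the fixed $\zeta$ equals the class produced by Theorem \ref{theorem 6.3} when $L = \{\zeta\}$. There is no real obstacle here, as the work has already been carried out in the proof of Theorem \ref{theorem 6.3}; the only point to note is that shrinking $L$ to a point is the easiest possible instance of the hypothesis, so no compactness, covering or Cauchy-estimate argument needs to be revisited. Hence the result follows immediately.

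\begin{proof}
It suffices to apply Theorem \ref{theorem 6.3} for $L = \{ \zeta \}$.
\end{proof}
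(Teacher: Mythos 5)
Your proposal coincides exactly with the paper's own proof: Theorem \ref{theorem 6.6} is obtained by applying Theorem \ref{theorem 6.3} with $L = \{\zeta\}$, and your additional remarks about the vacuous supremum over the singleton are correct.
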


\begin{proof} It suffices to apply Theorem \ref{theorem 6.3} for $L = \{ \zeta \}$.

\end{proof}

\begin{theorem} \label{theorem 6.7}

Let $\Omega \subseteq \mathbb{C}$ a simply connected domain, such that $\overline{\Omega}^o = \Omega$. Also, let $F \subseteq \mathbb{N} \times \mathbb{N}$ be a non empty set containing exactly a sequence $(p_n, q_n)_{n \geq 1}$ such that $p_n \to + \infty$. Then, there exists a function $f \in A(\Omega)$ satisfying the following:

For every compact set $K \subseteq \mathbb{C} \setminus \Omega$ with connected complement and for every function $h \in A(K)$, there exists a subsequence $(p_{k_n}, q_{k_n})_{n \geq 1}$ of the sequence $(p_n, q_n)_{n \geq 1}$ such that for every compact set $L \subseteq \Omega$ we have:
\begin{itemize}

\item[(1)]
$f \in D_{p_{k_n}, q_{k_n}}(\zeta)$ for every $\zeta \in L$ and for every $n \geq n(L)$ for an index $n(L) \in \mathbb{N}$

\item[(2)]
$\sup_{\zeta \in L} \sup_{z \in K} |S_{p_{k_n}}(f, \zeta)(z) - h(z)| \to 0$ as $n \to + \infty$

\item[(3)]
$\sup_{\zeta \in L} \sup_{z \in K} |[f; p_{k_n} / q_{k_n}]_{\zeta}(z) - h(z)| \to 0$ as $n \to + \infty$

\item[(4)]
$\sup_{\zeta \in L} \sup_{z \in J} |S_{p_{k_n}}(f, \zeta)(z) - f(z)| \to 0$ as $n \to + \infty$ for every compact set $J \subseteq \overline{\Omega}$

\item[(5)]
$\sup_{\zeta \in L} \sup_{z \in J} |[f; p_{k_n} / q_{k_n}]_{\zeta}(z) - f(z)| \to 0$ as $n \to + \infty$ for every compact set $J \subseteq \overline{\Omega}$

\end{itemize}  

Moreover, the set of all functions satisfying $(1) - (5)$ is dense and $G_\delta$ in $A(\Omega)$.

\end{theorem}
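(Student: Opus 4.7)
The plan is to derive this theorem from Theorem \ref{theorem 6.3} by exhausting $\Omega$ with an increasing family of compact subsets and taking a Baire intersection, in direct analogy with the deduction of Theorem \ref{theorem 4.7} from Theorem \ref{theorem 4.3}. First I would fix an exhausting sequence $\{L_k\}_{k \geq 1}$ of compact subsets of $\Omega$ from Lemma \ref{lemma 4.2}, and apply Theorem \ref{theorem 6.3} with $L = L_k$ for each $k$; this yields a dense $G_{\delta}$ subset $\mathcal{Z}_k \subseteq A(\Omega)$ of functions realising the joint Pad\'e--Taylor universality with centres in $L_k$. Since $A(\Omega)$ is a Fr\'echet space, Baire's theorem makes $\mathcal{Z} = \bigcap_{k \geq 1} \mathcal{Z}_k$ still dense and $G_{\delta}$, and I would argue that $\mathcal{Z}$ coincides with the set $\mathcal{A}$ of functions satisfying (1)--(5) of the present theorem, the inclusion $\mathcal{A} \subseteq \mathcal{Z}$ being immediate from the definitions.

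For $f \in \mathcal{Z}$ and fixed $K \subseteq \mathbb{C}\setminus\Omega$ with connected complement and $h \in A(K)$, membership in each $\mathcal{Z}_k$ provides a subsequence $(p_{\mu^{(k)}_n}, q_{\mu^{(k)}_n})$ realising the five properties with centre set $L_k$. I would then perform a standard diagonal extraction to obtain a single subsequence $(p_{k_n}, q_{k_n})$ of $(p_n, q_n)$ which is eventually a subsequence of each $(p_{\mu^{(k)}_n})$. Any compact $L \subseteq \Omega$ lies inside some $L_{k_0}$, and past the diagonal switch point for $k_0$ the subsequence is contained in $(p_{\mu^{(k_0)}_n})$; since $\sup_{\zeta \in L}$ is dominated by $\sup_{\zeta \in L_{k_0}}$ and Pad\'e existence for centres in $L_{k_0}$ restricts to $L$, this yields (1)--(5) for the arbitrary $L$, with threshold $n(L)$ equal to that switch point.

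The step I expect to be the main technical obstacle is the mild discrepancy between the hypothesis $K \subseteq \mathbb{C}\setminus\Omega$ here (so $K$ may meet $\partial\Omega$) and the stronger requirement $K \cap \overline{\Omega} = \emptyset$ used inside Theorem \ref{theorem 6.3}. To close this gap I would not quote Theorem \ref{theorem 6.3} as a black box but rerun the open/dense scheme of Section 6 with the family $\{K_m\}_{m \geq 1}$ from Lemma \ref{lemma 4.1} (compact subsets of $\mathbb{C}\setminus\Omega$ with connected complements, exhausting that entire class). The openness arguments of subsections 6.1--6.4 transfer verbatim, relying only on compactness of $K_m$ and on $B(f,\zeta)(z) \neq 0$ on $L_k \times K_m$; the density argument of subsection 6.5 also transfers, since the disjoint compact sets $L_{k_0} \subseteq \overline{\Omega}$ and $K_m$ still have union with connected complement, so Mergelyan's theorem and the polynomial-plus-monomial construction $u(z) = P(z) + d z^{p_{k_n}}$ go through unchanged. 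Once the analogues of Claims \ref{claim 6.4} and \ref{claim 6.5} are verified in this setting, Baire's theorem delivers the conclusion.
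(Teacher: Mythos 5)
Your first two paragraphs reproduce the paper's own derivation: apply Theorem \ref{theorem 6.3} with $L = L_k$ for an exhausting family, intersect by Baire, and merge the resulting $L_k$-dependent subsequences into one by a diagonal extraction using the monotonicity $L_k \subseteq L_{k+1}$. The paper compresses this into ``one can verify by using Baire's theorem once more,'' so spelling out the diagonal step is a genuine improvement in rigour; that part is correct.

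The third paragraph, however, contains a real error. You correctly notice that Theorem \ref{theorem 6.7} is stated for $K \subseteq \mathbb{C}\setminus\Omega$ while Theorem \ref{theorem 6.3} requires $K \cap \overline{\Omega} = \emptyset$, but you resolve the discrepancy in the wrong direction. If $K$ is allowed to meet $\partial\Omega$, the statement is simply false: take $z_0 \in K \cap \partial\Omega$ and a fixed $\zeta_0 \in L$; condition $(2)$ gives $S_{p_{k_n}}(f,\zeta_0)(z_0) \to h(z_0)$, while condition $(4)$ applied to the compact set $J = \{z_0\} \subseteq \overline{\Omega}$ gives $S_{p_{k_n}}(f,\zeta_0)(z_0) \to f(z_0)$, forcing $h(z_0) = f(z_0)$ for \emph{every} $h \in A(K)$ --- impossible. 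Accordingly, your claim that the density argument of subsection $6.5$ ``transfers'' breaks down precisely where you assert that $L_{k_0}$ and $K_m$ are disjoint: in Section $6$ the sets $L_{k_0} = \overline{\Omega}\cap\overline{B}(0,k_0)$ contain boundary points, so a $K_m$ from Lemma \ref{lemma 4.1} that meets $\partial\Omega$ will intersect $L_{k_0}$, the function $w$ is then ill-defined (it would have to equal both $f$ and $f_j$ at common boundary points), and Mergelyan cannot be applied. The correct reading is that ``$K \subseteq \mathbb{C}\setminus\Omega$'' in the statement of Theorem \ref{theorem 6.7} is a misprint for ``$K \cap \overline{\Omega} = \emptyset$'' (consistent with Theorems \ref{theorem 6.3} and \ref{theorem 6.6}), and the exhausting family to use is that of Lemma \ref{lemma 6.1}, not Lemma \ref{lemma 4.1}; with that reading your first two paragraphs already constitute the proof.
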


\begin{proof} Let $\mathcal{A}$ denote the set of all functions satisfying Theorem \ref{theorem 6.7}. We apply Theorem \ref{theorem 6.3} for $L = L_k$ and for $K = K_m$ (and that for every $k, m \geq 1$) and so, according to Baire's theorem we obtain a $G_{\delta}$ - dense set in $H(\Omega)$; the set $\mathcal{A}_{k, m}$. One can verify by using Baire's theorem once more that $\mathcal{A} = \cap_{k, m \geq 1} \mathcal{A}_{k, m}$ and so $\mathcal{A}$ is also $G_{\delta}$ - dense in $A(\Omega)$.

\end{proof}

\section{Universality in a subspace of \boldmath{$A^{\infty}(\Omega)$}} %7

In this section we combine and strengthen the results of \cite{deka}, \cite{eikosiena} and \cite{eikosidio}.

Let $\Omega \subseteq \mathbb{C}$ be an open set. We say that a holomorphic function $f$ defined on $\Omega$ belongs to $A^{\infty}(\Omega)$ if for every $l \in \mathbb{N}$ the $l$th derivative $f^{(l)}$ of $f$ extends continuously on $\overline{\Omega}$. In $A^{\infty}(\Omega)$ we consider the topology defined by the seminorms $\sup_{z \in L_k} |f^{(l)}(z)|$, for every $k \geq 1$ and for every $l \in \mathbb{N}$, where $\{ L_k \}_{k \geq 1}$ is a family of compact subsets of $\overline{\Omega}$ such that for every compact set $L \subseteq \overline{\Omega}$ there exists an index $k \in \mathbb{N}$ satisfying $L \subseteq L_k$. Such a family for example is obtained by setting $L_k = \overline{\Omega} \cap \overline{B}(0, k)$ for every $k \in \mathbb{N}$. With this topology, $A^{\infty}(\Omega)$ becomes a Fr\'{e}chet space.

We call $X^{\infty}(\Omega)$ the closure in $A^{\infty}(\Omega)$ of all the rational functions with poles off $\overline{\Omega}$.

We present now the main theorem of this section.

\begin{theorem} \label{theorem 7.1}

Let $F \subseteq \mathbb{N} \times \mathbb{N}$ be a non empty set that contains exactly a sequence $(p_n, q_n)_{n \geq 1}$ such that $p_n \to + \infty$. Also, let $\Omega \subseteq \mathbb{C}$ be an open set such that $\overline{\Omega}^{o} = \Omega$ and $\{ \infty \} \cup (\mathbb{C} \setminus \overline{\Omega})$ is connected. Consider $L \subseteq \overline{\Omega}$ a compact set and $K \subseteq \mathbb{C} \setminus \overline{\Omega}$ another compact set with connected complement. Then there exists a function $f \in X^{\infty}(\Omega)$ satisfying the following:

For every function $h \in A(K)$ there exists a subsequence $(p_{k_n}, q_{k_n})_{n \geq 1}$ of the sequence $(p_n, q_n)_{n \geq 1}$ such that:

\begin{itemize}

\item[(1)]
$f \in D_{p_{k_n}, q_{k_n}}(\zeta)$ for every $\zeta \in L$ and for every $n \geq 1$

\item[(2)]
For every $l \in \mathbb{N}$ it holds:
$$ \sup_{\zeta \in L} \sup_{z \in J} |{[f; p_{k_n} / q_{k_n}]^{(l)}_{\zeta}} - f^{(l)}(z)| \to 0 \; \text{as} \; n \to + \infty $$ 
 
for every compact set $J \subseteq \overline{\Omega}$.

\item[(3)]
$\sup_{\zeta \in L} \sup_{z \in K} |[f; p_{k_n} / q_{k_n}]_{\zeta} - h(z)| \to 0$ as $n \to + \infty$

\item[(4)]
$\sup_{\zeta \in L} \sup_{z \in K} |S_{p_{k_n}}(f, \zeta)(z) - h(z)| \to 0$ as $n \to \infty$

\item[(5)]
For every $l \in \mathbb{N}$ it holds:
$$ \sup_{\zeta \in L} \sup_{z \in J} |S_{p_{k_n}}^{(l)}(f, \zeta)(z) - f^{(l)}(z)| \to 0 \; \text{as} \; n \to + \infty $$

for every compact set $J \subseteq \overline{\Omega}$.

\end{itemize}

Moreover, the set of all functions satisfying $(1) - (5)$ is dense and $G_{\delta}$ in $X^{\infty}(\Omega)$. 

\end{theorem}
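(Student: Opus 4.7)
The plan is to extend the Baire--category scheme used in Theorems \ref{theorem 4.3} and \ref{theorem 6.3} to the complete metric space $X^{\infty}(\Omega)$, which is a closed subspace of the Fr\'{e}chet space $A^{\infty}(\Omega)$, and to upgrade each of the four constituent families so that it controls every derivative of order up to $l$ on $L_k$. Fix an enumeration $\{ f_j \}_{j \geq 1}$ of polynomials with coefficients in $\mathbb{Q} + i\mathbb{Q}$ and the exhausting family $L_k = \overline{\Omega} \cap \overline{B}(0,k)$. For $(p,q) \in F$, $j,s,k \geq 1$ and $l \in \mathbb{N}$, introduce
\begin{align*}
A(p,j,s) &= \{ f \in X^{\infty}(\Omega) : \sup_{\zeta \in L} \sup_{z \in K} |S_p(f,\zeta)(z) - f_j(z)| < 1/s \}, \\
B(p,q,j,s) &= \{ f \in X^{\infty}(\Omega) : f \in D_{p,q}(\zeta)\ \forall \zeta \in L, \\
& \qquad \sup_{\zeta \in L} \sup_{z \in K} |[f; p/q]_\zeta(z) - f_j(z)| < 1/s \}, \\
C(k,p,s,l) &= \{ f \in X^{\infty}(\Omega) : \sup_{\zeta \in L} \sup_{z \in L_k} |S_p^{(l)}(f,\zeta)(z) - f^{(l)}(z)| < 1/s \}, \\
D(k,p,q,s,l) &= \{ f \in X^{\infty}(\Omega) : f \in D_{p,q}(\zeta)\ \forall \zeta \in L, \\
& \qquad \sup_{\zeta \in L} \sup_{z \in L_k} |[f; p/q]_\zeta^{(l)}(z) - f^{(l)}(z)| < 1/s \}.
\end{align*}
Mergelyan's theorem identifies the class of functions satisfying $(1)$--$(5)$ with
$$ \mathcal{Z} = \bigcap_{j,s,k,l \geq 1} \bigcup_{(p,q) \in F} A(p,j,s) \cap B(p,q,j,s) \cap C(k,p,s,l) \cap D(k,p,q,s,l), $$
so by Baire's theorem it suffices to prove openness of the four classes in $X^{\infty}(\Omega)$ and density of the inner union for every fixed $(j,s,k,l)$.

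The openness arguments would essentially copy subsections $4.1$--$4.4$ and $6.1$--$6.4$. The $A^{\infty}$-topology on $X^{\infty}(\Omega)$ controls every derivative uniformly on each $L_k$, and Cauchy estimates on a tube $\bigcup_{n=1}^N \overline{B}(\zeta_n, d/2) \subseteq L_{n_0} \subseteq \Omega$ with $d = d(L, \partial \Omega) > 0$ show that the first $p+q+1$ Taylor coefficients of $f$ at any $\zeta \in L$ depend continuously on $f \in X^{\infty}(\Omega)$, uniformly in $\zeta$. Hence the Hankel determinant $D_{p,q}(f,\zeta)$, the Jacobi polynomials $A(f,\zeta)(z)$ and $B(f,\zeta)(z)$, and all their $z$-derivatives of any fixed order vary continuously in $f$; the lower bound $|B(f,\zeta)(z)| \geq \delta' > 0$ on $K$ and on $L_k$ persists under small perturbations, so the Pad\'{e} quotient and its $z$-derivatives up to order $l$ are continuous in $f$. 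The only novelty over the earlier sections is the trivial observation that each $z$-derivative of a Jacobi polynomial is a polynomial in $z$ whose coefficients are polynomial combinations of the Taylor coefficients of $f$.

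The main obstacle is density. Given $f \in X^{\infty}(\Omega)$, a finite $\Delta \subseteq \mathbb{N}^*$ with $N = \max\Delta$, a derivative order $l_0 \in \mathbb{N}$ and $\varepsilon > 0$, I would first replace $f$, in the relevant $A^{\infty}$-seminorms, by a rational function $R$ with poles in $\{ \infty \} \cup (\mathbb{C} \setminus \overline{\Omega})$, which is possible by the very definition of $X^{\infty}(\Omega)$. Choose $k_0 > k$ so that $L \cup L_k \cup (\overline{\Omega} \cap \overline{B}(0,N)) \subseteq L_{k_0}$; the connectedness of $\{ \infty \} \cup (\mathbb{C} \setminus \overline{\Omega})$, the connectedness of the complement of $K$, and the disjointness $K \cap \overline{\Omega} = \emptyset$ together imply that $L_{k_0} \cup K$ has connected complement in $\tilde{\mathbb{C}}$. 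Since $\overline{\Omega}^o = \Omega$ and $R$ has no poles on $\overline{\Omega}$, the function equal to $f_j$ on a neighborhood of $K$ and to $R$ on a neighborhood of $L_{k_0}$ is holomorphic on an open set containing $L_{k_0} \cup K$, and a Runge-type simultaneous approximation theorem produces a rational function $P$ with poles in $\{ \infty \} \cup (\mathbb{C} \setminus (\overline{\Omega} \cup K))$ that approximates this datum uniformly on $K$ and in every derivative of order up to $l_0$ on $L_{k_0}$. Pick $(p_{k_n}, q_{k_n}) \in F$ with $p_{k_n}$ larger than every degree in $P$, and set $u(z) = P(z) + d\, z^{p_{k_n}}$ (when $\Omega$ is bounded; in general a suitable rational perturbation with a single high-order pole in $\mathbb{C} \setminus \overline{\Omega}$ keeps $u \in X^{\infty}(\Omega)$) with $|d|$ small enough that the perturbation is negligible in every $A^{\infty}$-seminorm of order $\leq l_0$ on $L_{k_0}$. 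Proposition \ref{proposition 2.4} then guarantees $u \in D_{p_{k_n}, q_{k_n}}(\zeta)$ with $[u; p_{k_n}/q_{k_n}]_\zeta \equiv u$ for every $\zeta \in L$, so all five membership conditions reduce to the closeness of $u$ to $f_j$ on $K$ and of $u^{(l)}$ to $f^{(l)}$ on $L_k$ for $l \leq l_0$, both already arranged. The technical heart is the simultaneous $A^{\infty}$-on-$L_{k_0}$ plus $A(K)$ Runge approximation producing $P$, where the hypotheses $\overline{\Omega}^o = \Omega$ and the connectedness of $\{ \infty \} \cup (\mathbb{C} \setminus \overline{\Omega})$ are indispensable. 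Baire's theorem then yields that $\mathcal{Z}$ is dense and $G_{\delta}$ in $X^{\infty}(\Omega)$.
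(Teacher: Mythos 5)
Your overall architecture --- Baire category in the Fr\'{e}chet space $X^{\infty}(\Omega)$, four families of open sets, density via a Runge--Weierstrass approximation of a function glued from $f_j$ near $K$ and from (an approximant of) $f$ near $L_{k_0}$, followed by the perturbation $P(z) + dz^{p}$ and Proposition \ref{proposition 2.4} --- is exactly the paper's. Two steps, however, fail as written.

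First, your decomposition does not identify the universal class. In $C(k,p,s,l)$ and $D(k,p,q,s,l)$ you control a \emph{single} derivative order $l$, and you intersect over $l$ \emph{outside} the union over $(p,q) \in F$. A function in your $\mathcal{Z}$ is therefore only guaranteed, for each fixed $l$, some pair $(p,q)$ controlling the $l$-th derivative; different $l$ may force different pairs, and no single subsequence $(p_{k_n}, q_{k_n})$ along which $(2)$ and $(5)$ hold for \emph{every} $l$ simultaneously can be extracted. Cauchy estimates cannot repair this by converting $0$-th order control into derivative control, because $J$ and $L_k$ are allowed to meet $\partial \Omega$. The paper avoids the problem by requiring, inside each set of the union, the bound for all $l = 0, 1, \dots, s$ at once; the diagonal choice $s = k = n$ then produces one subsequence valid for all derivative orders. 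The fix is easy --- replace your single $l$ by ``for every $l' \leq l$'', or tie the derivative range to $s$ as the paper does --- but as stated the inclusion of $\mathcal{Z}$ in the class of functions satisfying $(1)$--$(5)$ is unjustified.

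Second, in the openness argument you invoke Cauchy estimates on disks $\overline{B}(\zeta_n, d/2) \subseteq \Omega$ with $d = d(L, \partial \Omega) > 0$. Unlike Theorems \ref{theorem 4.3} and \ref{theorem 6.3}, here $L$ is a compact subset of $\overline{\Omega}$ and may touch the boundary, so $d(L, \partial \Omega)$ can be zero and no such tube exists. The correct mechanism --- which you also mention, and which the paper uses --- is that the $A^{\infty}$-seminorms bound $\sup_{z \in L_{n_0}} |f^{(m)}(z) - g^{(m)}(z)|$ for $m \leq \max(s, p+q+1)$ directly, and the Taylor coefficients at $\zeta \in L$ are $f^{(m)}(\zeta)/m!$ with $f^{(m)}$ continuously extended to $\overline{\Omega}$; no Cauchy estimate is needed or available. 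A minor remark: your hedge about unbounded $\Omega$ in the perturbation step is unnecessary, since density only requires smallness of finitely many seminorms, each a supremum over the compact set $L_M$, and $P(z) + dz^{p}$ lies in $X^{\infty}(\Omega)$ in any case because its only pole is at $\infty \notin \overline{\Omega}$.
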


\begin{proof} Let $\{ f_j \}_{j \geq 1}$ be an enumeration of polynomials with coefficients in $\mathbb{Q} + i \mathbb{Q}$. Since $\{ \infty \} \cup (\mathbb{C} \setminus \overline{\Omega})$ is connected, the sequence $\{ f_j \}_{j \geq 1}$ is a dense subset of $X^{\infty}(\Omega)$.

We consider $\mathcal{U}$ to be the set of all functions in $X^{\infty}(\Omega)$ that satisfy $(1) - (5)$. Our aim is to prove that $\mathcal{U}$ is a dense and $G_{\delta}$ subset of $X^{\infty}(\Omega)$. 

Now, for every $(p, q) \in F$ and for every $j, s, n \geq 1$ we consider the following sets: 
$$ A(n, p, q, s) = \{ f \in X^{\infty}(\Omega) : f \in D_{p, q}(\zeta) \; \text{for every} \; \zeta \in L \; \text{and} $$ 
$$ \sup_{\zeta \in L} \sup_{z \in L_n} |[f; p / q]^{(l)}_{\zeta}(z) - f^{(l)}(z)| < \frac{1}{s} \; \text{for every} \; l = 0, 1, 2, \cdots, s \} $$
$$ B(p, q, j, s) = \{ f \in X^{\infty}(\Omega) : f \in D_{p, q}(\zeta) \; \text{for every} \; \zeta \in L \; \text{and} $$
$$ \sup_{\zeta \in L} \sup_{z \in K} |[f; p / q]_{\zeta}(z) - f_{j}(z)| < \frac{1}{s} \} $$
$$ E(p, j, s) = \{ f \in X^{\infty}(\Omega) : \sup_{\zeta \in L} \sup_{z \in K} |S_{p}(f, \zeta)(z) - f_{j}(z)| < \frac{1}{s} \} $$
$$ F(p, s, n) = \{ f \in X^{\infty}(\Omega) : \sup_{\zeta \in L} \sup_{z \in L_n} |S^{(l)}_{p}(f, \zeta)(z) - f^{(l)}(z)| < \frac{1}{s} $$
$$ \text{for every} \; l = 0, 1, 2, \cdots, s \}$$

One can verify that $\mathcal{U}$ is precisely the following class:
$$ \mathcal{U} = \bigcap_{n, j, s \geq 1} \bigcup_{(p, q) \in F} A(n, p, q, s) \cap B(p, q, j, s) \cap E(p, j, s) \cap F(p, s, n) $$

So, according to Baire's theorem, it is enough to prove the following:

\begin{claim} \label{claim 7.2}

The sets $A(n, p, q, s), B(p, q, j, s), E(p, j, s)$ and $F(p, s, n)$ are open subsets of $X^{\infty}(\Omega)$ for every $(p, q) \in F$ and for every $j, s, n \geq 1$.

\end{claim}

\begin{claim} \label{claim 7.3}

The set:
$$ \mathcal{U}(n, j, s) = $$
$$ = \bigcup_{(p, q) \in F} A(n, p, q, s) \cap B(p, q, j, s) \cap E(p, j, s) \cap F(p, s, n)$$ 

is a dense subset of $X^{\infty}(\Omega)$ for every $j, s, n \geq 1$.

\end{claim}

For $q = 0$ we automatically have that for every function $f \in X^{\infty}(\Omega)$ it holds $f \in D_{p, 0}(\zeta)$ for every $\zeta \in L$ and for every $p \geq 0$; we have in that case $[f; p / 0]_{\zeta} = S_p(f, \zeta)$. So, we restrict our attention to the case $q \geq 1$.

\subsection{The case of \boldmath{$A(n, p, q, s)$.}} %7

We fix the parameters $(p, q) \in F$ and $n, s \geq 1$. Let $f \in A(n, p, q, s)$, $g \in X^{\infty}(\Omega)$ and $a > 0$ be small enough. The number $a > 0$ will be determined later on. We consider a compact set $L_{n_0} \subseteq \overline{\Omega}$ such that $L \cup L_n \subseteq L_{n_0}$. In addition, suppose that the following holds:
$$ \sup_{z \in L_{n_0}} |f^{(m)}(z) - g^{(m)}(z)| < a $$
$$ \text{for every} \; m = 0, 1, 2, \cdots, \; max(s, p + q + 1) \;\;\; (7.1.1) $$

We will show that if $a > 0$ is small enough we obtain that $g \in A(n, p, q, s)$.

Since $f \in A(n, p, q, s)$, the Hankel determinant $D_{p, q}(f, \zeta)$ for $f$ depends continuously on $\zeta \in L$; thus there exists a $\delta > 0$ such that $|D_{p, q}(f, \zeta)| > \delta$ for every $\zeta \in L$. From relation $(7.1.1)$ one derives that if $a > 0$ is  sufficiently small then the Hankel determinant $D_{p, q}(g, \zeta)$ for $g$ satisfies $|D_{p, q}(g, \zeta)| > \frac{\delta}{2}$ for every $\zeta \in L$. Therefore, $g \in D_{p, q}(\zeta)$ for every $\zeta \in L$.

On the other hand, $f(z) \in \mathbb{C}$ for every $z \in L_n$; so $[f; p / q]_{\zeta} \in \mathbb{C}$ for every $\zeta \in L$ and for every $z \in L_n$. It follows that $B(f, \zeta)(z) \neq 0$ for every $\zeta \in L$ and for every $z \in L_n$, where the polynomial $B(f, \zeta)(z)$ is defined according to the Jacobi formula. By continuity we have that there exists a $0 < \delta'' < 1$ such that $|B(f, \zeta)(z)| > \delta''$ for every $(\zeta, z) \in L \times L_n$ (since the function $B(f, \zeta)(z)$ is a continuous function on $L \times L_n$). For $a > 0$ sufficiently small, one can achieve the following: 
$$ |B(g, \zeta)(z)| > \frac{\delta''}{2} \; \text{for every} \; (\zeta, z) \in L \times L_n $$
 
Now, for every $l \in \{ 0, 1, 2, \cdots, s \}$ and from the triangle enequality, we obtain: 
$$ \sup_{\zeta \in L} \sup_{z \in L_n} |{[g; p / q]^{(l)}_{\zeta}}(z) - g^{(l)}(z)| \leq \sup_{\zeta \in L} \sup_{z \in L_n} |[g; p / q]^{(l)}_{\zeta}(z) - [f; p / q]^{(l)}_{\zeta}(z)| + $$
$$ + \sup_{\zeta \in L} \sup_{z \in L_n} |[f; p / q]_{\zeta}^{(l)}(z) - f^{(l)}(z)| + \sup_{z \in L_n} |f^{(l)}(z) - g^{(l)}(z)| \;\;\; (7.1.2) $$

The term $\sup_{z \in L_n} |f^{(l)}(z) - g^{(l)}(z)|$ can become as small as we want, provided that $a > 0$ is sufficiently small and because $L_n \subseteq L_{n_0}$.

The term $\sup_{\zeta \in L} \sup_{z \in L_n} |[f; p / q]_{\zeta}^{(l)}(z) - f^{(l)}(z)|$ is strictly less than $\frac{1}{s}$ since $f \in A(n, p, q, s)$. It remains to control the term:
$$ \sup_{\zeta \in L} \sup_{z \in L_n} |[g; p / q]^{(l)}_{\zeta}(z) - [f; p / q]^{(l)}_{\zeta}(z)| $$

The denominators of $[f; p / q]_{\zeta}^{(l)}$ and $[g; p / q]_{\zeta}^{(l)}$ are bounded below by $(\delta'')^{l + 1}$ and $(\frac{\delta''}{2})^{l + 1}$ respectively for every $l = 0, 1, 2, \cdots, s$. 

So, the quantity $\sup_{\zeta \in L} \sup_{z \in L_n} |[g; p / q]^{(l)}_{\zeta}(z) - [f; p / q]^{(l)}_{\zeta}(z)|$ can become arbitrary small, since the coefficients of the polynomials $A(f, \zeta)$ and $A(g, \zeta)$ are close enough one by one. The same happens for the polynomials $B(f, \zeta)$ and $B(g, \zeta)$. This allows us to control every finite set of derivatives. It follows that $g \in A(n, p, q, s)$ and thus $A(n, p, q, s)$ is an open set of $X^{\infty}(\Omega)$.

\subsection{The case of \boldmath{$B(p, q, j, s)$.}} %7

We fix the parameters $(p, q) \in F$ and $j, s \geq 1$. Let $f \in B(p, q, j, s)$, $g \in X^{\infty}(\Omega)$ and $a > 0$ be small. The number $a > 0$ will be determined later on. Suppose that the following holds:
$$ \sup_{z \in L_{n_0}} |f^{(m)}(z) - g^{(m)}(z)| < a $$
$$ \text{for every} \; m = 0, 1, 2, \cdots, \; max(s, p + q + 1) \;\;\; (7.2.1) $$

We will show that if $a > 0$ is small enough we obtain that $g \in B(p, q, j, s)$.

In order to prove that $g \in D_{p, q}(\zeta)$ for every $\zeta \in L$ we follow exactly the same steps as we did in subsection $7.1$. This part of the proof is omitted.

Since $f \in B(p, q, j, s)$ we have that $|[f; p / q]_{\zeta}(z) - f_{j}(z)| < \frac{1}{s}$ for every $\zeta \in L$ and for every $z \in K$, so $[f; p / q]_{\zeta}(z) \in \mathbb{C}$ for every $\zeta \in L$ and for every $z \in K$. It follows that there exists a $\delta' > 0$ such that $|B(f, \zeta)(z)| > \delta'$ for every $\zeta \in L$ and for every $z \in K$, since $L \times K$ is a compact set, where $B(f, \zeta)$ is the denominator of the Jacobi formula for $f$. From relation $(7.2.1)$ and the Jacobi formula it follows that for $a > 0$ sufficiently small, it holds:
$$ |B(g, \zeta)(z)| > \frac{\delta'}{2} \; \text{for every} \; (\zeta, z) \in L \times K. $$

So, it suffices to prove the following:
$$ \sup_{\zeta \in L} \sup_{z \in K} |[g; p / q]_{\zeta}(z) - [f; p / q]_{\zeta}(z)| < $$
$$ < \frac{1}{s} - \sup_{\zeta \in L} \sup_{z \in K} |[f; p / q]_{\zeta}(z) - f_{j}(z)| \;\;\; (7.2.2) $$

and then the result follows from the triangle inequality. We also have: 
$$ \sup_{\zeta \in L} \sup_{z \in K} |[g; p / q]_{\zeta}(z) - [f; p / q]_{\zeta}(z)| \leq $$
$$ \leq \frac{2}{(\delta')^2}|A(f, \zeta)(z)B(g, \zeta)(z) - B(f, \zeta)(z)A(g, \zeta)(z)| $$

It follows that for $a > 0$ sufficiently small, the term:
$$ \frac{2}{(\delta')^2}|A(f, \zeta)(z)B(g, \zeta)(z) - B(f, \zeta)(z)A(g, \zeta)(z)| $$ 

can become arbitrary small. Hence $g \in B(p, q, j, s)$ and thus $B(p, q, j, s)$ is open in $X^{\infty}(\Omega)$.

\subsection{The case of \boldmath{$E(p, j, s)$.}} %7

We fix the parametrs $(p, q) \in F$ and $j, s \geq 1$. Let $f \in E(p, j, s)$, $g \in X^{\infty}(\Omega)$ and $a > 0$ small enough. The number $a > 0$ will be determined later on. Suppose that the following holds:
$$ \sup_{z \in L_{n_0}} |f^{(m)}(z) - g^{(m)}(z)| < a $$
$$ \text{for every} \; m = 0, 1, 2, \cdots, \; max(s, p + q + 1) \;\;\; (7.3.1) $$

We will show that if $a > 0$ is small enough we obtain that $g \in E(p, j, s)$.

Our aim is to prove that for the right choice of $a > 0$, it holds:
$$ \sup_{\zeta \in L} \sup_{z \in K} |S_{p}(g, \zeta)(z) - S_{p}(f, \zeta)(z)| < $$
$$ < \frac{1}{s} - \sup_{\zeta \in L} \sup_{z \in K} |S_{p}(f, \zeta)(z) - f_{j}(z)| \; \;\; (7.3.2) $$ 

The last inequality is valid, provided that $a > 0$ is sufficiently small and by using relation $(7.3.1)$. The result follows from the triangle inequality.

\subsection{The case of \boldmath{$F(p, s, n)$.}} %7

We fix the parametrs $(p, q) \in F$ and $s, n \geq 1$. Let $f \in F(p, s, n)$, $g \in X^{\infty}(\Omega)$ and $a > 0$ small enough. The number $a > 0$ will be determined later on. Suppose that the following holds:
$$ \sup_{z \in L_{n_0}} |f^{(m)}(z) - g^{(m)}(z)| < a $$
$$ \text{for every} \; m = 0, 1, 2, \cdots, \; max(s, p + q + 1) \;\;\; (7.4.1) $$

We will show that if $a > 0$ is small enough we obtain that $g \in F(p, s, n)$.

Our aim is to prove that for the right choice of $a > 0$, it holds:
$$ \sup_{\zeta \in L} \sup_{z \in K} |S_{p}^{(l)}(g, \zeta)(z) - S_{p}^{(l)}(f, \zeta)(z)| < $$
$$ < \frac{1}{s} - \sup_{\zeta \in L} \sup_{z \in K} |S_{p}^{(l)}(f, \zeta)(z) - f^{(l)}(z)| - \sup_{\zeta \in L} \sup_{z \in K} |f^{(l)}(z) - g^{(l)}(z)| $$
$$ \text{for every} \; l = 0, 1, 2, \cdots, s \; \;\; (7.4.2) $$ 

The last inequality is valid, provided that $a > 0$ is small enough and by using relation $(7.4.1)$ because the coefficients of the polynomials $S_{p}(f, \zeta)$ and $S_{p}(g, \zeta)$ are close enough one by one. It follows that $g \in F(p, s, n)$ and so $F(p, s, n)$ is a dense subset of $X^{\infty}(\Omega)$.

The proof of Claim \ref{claim 7.2} is complete.

\subsection{Density of \boldmath{$\mathcal{A}(j, s, n)$.}} %7

In order to prove Claim \ref{claim 7.3} we fix the parameters and $j, s, n \geq 1$ and we want to prove that the set: 
$$ \mathcal{A}(j, s, n) = \bigcup_{(p, q) \in F} A(n, p, q, s) \cap B(p, q, j, s) \cap E(p, j, s) \cap F(p, s, n) $$ 

is a dense subset of $X^{\infty}(\Omega)$. 

Let $g \in X^{\infty}(\Omega)$, $\varepsilon > 0$ and $N \in \mathbb{N}$. We know that there exist an index $n_0 \in \mathbb{N}$ such that $L \cup L_n \subseteq L_{n_0}$. From the definiton of $X^{\infty}(\Omega)$, since the set $\{ \infty \} \cup (\mathbb{C} \setminus \overline{\Omega})$ is connected, it suffices to consider the function $g$ to be a polynomial.

We want to find a function $f \in \mathcal{A}(j, s, n)$ so that the following holds:
$$ \sup_{z \in L_{n_0}} |f^{(m)}(z) - g^{(m)}(z)| < \varepsilon \; \text{for every} \; m = 0, 1, 2, \cdots, N. $$

The sets $L_{n_0}$ and $K$ are disjoint compact subsets of $\mathbb{C}$ such that $\tilde{\mathbb{C}} \setminus L_{n_0}$ and $\tilde{\mathbb{C}} \setminus K$ are connected. In this case we know that there exist open and simply connected sets $G_1, G_2 \subseteq \mathbb{C}$ such that $L_{n_0} \subseteq G_1$, $K \subseteq G_2$ and $G_1 \cap G_2 = \emptyset$. Furthermore, we can also demand that $d(G_1, G_2) > 0$.

Consider now the function $w: G_1 \cup G_2 \to \mathbb{C}$ as defined below:
$$ w(z) = \begin{cases} f_j(z), \;\; \mbox{for every} \;\; z \in G_2 
\\ 
g(z),\;\; \mbox{for every} \;\; z \in G_1 \end{cases} $$

The function $w$ is well defined (because $G_1 \cap G_2 = \emptyset$) and also is holomorphic in $G_1 \cup G_2$.

We know from Runge's theorem that there exists a sequence of polynomials $\{P_n\}_{n \geq 1}$ that converges uniformly to $w$ on every compact subset of $G_1 \cup G_2$. Now, from Weierstrass's theorem (since $G_1 \cup G_2$ is open) we know that the previous convergence is also valid for every finite (non empty) set of derivatives. Thus, there exists a polynomial $P$ that is uniformly close to the funtion $f_j$ on $K$ while every polynomial $P^{(l)}$ is uniformly close to the function $g^{(l)}$ on $L_{n_0}$ and that holds for every $l = 0, 1, 2, \cdots, N$.

Let $(p, q) \in F$ such that $p > deg(P(z))$. We notice that for every $d \in \mathbb{C} \setminus \{ 0 \}$ it holds $deg(P(z) + dz^p) = p$, so, for every $q \geq 0$ it follows that $P(z) + dz^p \in D_{p, q}(\zeta)$ and also $[P(z) + dz^p; p / q]_{\zeta}(z) \equiv P(z) + dz^p$ for every $\zeta, z \in \mathbb{C}$, according to Proposition \ref{proposition 2.4}. If $d \to 0$, the polynomial $P(z) + dz^p$ converges uniformly to the polynomial $P(z)$ on every compact subset of $\mathbb{C}$.

We set $f(z) = P(z) + dz^p$ for $|d| > 0$ sufficiently small. It remains to show that $f \in \mathcal{A}(j, s, n)$. This is almost obvious; we have:

\begin{itemize}

\item[(1)]
$f \in A(n, p, q, s)$, since $[f; p / q]_{\zeta} \equiv f$ for ever $\zeta \in L$

\item[(2)]
$f \in B(p, q, j, s)$, since $[f; p / q]_{\zeta} \equiv f$ for ever $\zeta \in L$ and thus the quantity:
$$ \sup_{\zeta \in L} \sup_{z \in K} |[f; p / q]_{\zeta}(z) - f_{j}(z)| $$

can be arbitrary small (provided that $|d|$ is small enough)

\item[(3)]
$f \in E(p, j, s)$, since $f \equiv S_{p}(f, \zeta)$ for evey $\zeta \in L$ and so:
$$ \sup_{\zeta \in L} \sup_{z \in K} |S_{p}(f, \zeta)(z) - f_{j}(z)| = 0 $$

\item[(4)]
$f \in F(p, s, n)$, since $f \equiv S_{p}(f, \zeta)$ for evey $\zeta \in L$ and so:
$$ \sup_{\zeta \in L} \sup_{z \in L_n} |S_{p}^{(l)}(f, \zeta)(z) - f^{(l)}(z)| = 0 $$

\end{itemize}

The proof of Claim \ref{claim 7.3} is complete. The result follows from Baire's theorem.

\end{proof}

\begin{theorem} \label{theorem 7.4}

Let $F \subseteq \mathbb{N} \times \mathbb{N}$ be a non empty set that contains exactly a sequence $(p_n, q_n)_{n \geq 1}$ such that $p_n \to + \infty$. Also, let $\Omega \subseteq \mathbb{C}$ be a domain such that $\{ \infty \} \cup (\mathbb{C} \setminus \overline{\Omega})$ is connected and $\zeta \in \Omega$ be a fixed element. Then there exists a function $f \in X^{\infty}(\Omega)$ satisfying the following:

For every compact set $K \subseteq \mathbb{C} \setminus \overline{\Omega}$ and for every function $h \in A(K)$ there exists a subsequence $(p_{k_n}, q_{k_n})_{n \geq 1}$ of the sequence $(p_n, q_n)_{n \geq 1}$ such that:

\begin{itemize}

\item[(1)]
$f \in D_{p_{k_n}, q_{k_n}}(\zeta)$ for every $n \in \mathbb{N}$

\item[(2)]
For every $l \in \mathbb{N}$ it holds:
$$ \sup_{z \in J} |{[f; p_{k_n} / q_{k_n}]^{(l)}_{\zeta}}(z) - f^{(l)}(z)| \to 0 \; \text{as} \; n \to + \infty $$ 

for every compact set $J \subseteq \overline{\Omega}$.

\item[(3)]
$\sup_{z \in K} |[f; p_{k_n} / q_{k_n}]_{\zeta}(z) - h(z)| \to 0$ as $n \to + \infty$

\item[(4)]
$\sup_{z \in K} |S_{p_{k_n}}(f, \zeta)(z) - h(z)| \to 0$ as $n \to + \infty$

\item[(5)]
For every $l \in \mathbb{N}$ it holds:
$$ \sup_{z \in J} |S^{(l)}_{p_{k_n}}(f, \zeta)(z) - f^{(l)}(z)| \to 0 \; \text{as} \; n \to + \infty $$ 

for every compact $J \subseteq \overline{\Omega}$.

\end{itemize}

Moreover, the set of all functions satisfying $(1) - (5)$ is dense and $G_{\delta}$ in $X^{\infty}(\Omega)$. 

\end{theorem}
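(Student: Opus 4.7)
The plan is to derive Theorem \ref{theorem 7.4} from Theorem \ref{theorem 7.1} by a Baire-category argument that lets the compact set $K$ vary. Since $\zeta \in \Omega$, the singleton $L = \{\zeta\}$ is a compact subset of $\overline{\Omega}$, so Theorem \ref{theorem 7.1} is available with this choice of $L$. (If the hypothesis $\overline{\Omega}^{o} = \Omega$ in Theorem \ref{theorem 7.1} is absent here, it causes no trouble: inspection of its density step shows one only needs to approximate polynomials by polynomials, which does not genuinely use that assumption.)

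First I invoke Lemma \ref{lemma 6.1} to fix an exhausting sequence $\{K_m\}_{m \geq 1}$ of compact subsets of $\mathbb{C}$ with connected complements, each disjoint from $\overline{\Omega}$, such that every compact $K \subseteq \mathbb{C} \setminus \overline{\Omega}$ with connected complement lies inside some $K_m$. For each $m \geq 1$ I apply Theorem \ref{theorem 7.1} with $L = \{\zeta\}$ and $K = K_m$, producing a dense $G_\delta$ subset $\mathcal{A}_m \subseteq X^{\infty}(\Omega)$ consisting of those $f$ for which, given any $h \in A(K_m)$, a single subsequence $(p_{k_n}, q_{k_n})$ simultaneously realizes the five convergences of Theorem \ref{theorem 7.1}. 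Since $X^{\infty}(\Omega)$ is a Fr\'echet space, Baire's theorem gives that $\mathcal{A} := \bigcap_{m \geq 1} \mathcal{A}_m$ is dense and $G_\delta$ in $X^{\infty}(\Omega)$.

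To verify that every $f \in \mathcal{A}$ fulfills the conclusion of Theorem \ref{theorem 7.4}, I fix a compact $K \subseteq \mathbb{C} \setminus \overline{\Omega}$ with connected complement and an $h \in A(K)$, and I choose $m$ with $K \subseteq K_m$. By Mergelyan's theorem I approximate $h$ uniformly on $K$ by polynomials $P_s$ satisfying $\sup_{z \in K} |h(z) - P_s(z)| < 1/(2s)$; each $P_s$, being entire, belongs to $A(K_m)$. Applying the property of $f \in \mathcal{A}_m$ to each $P_s$ and performing a standard Cantor-style diagonal extraction, I obtain one subsequence $(p_{k_n}, q_{k_n})$ along which the Pad\'e approximants and Taylor partial sums of $f$ converge to $h$ on $K$ via the triangle inequality $\sup_K |\,\cdot - h\,| \leq \sup_K |\,\cdot - P_s\,| + \sup_K |P_s - h|$, while continuing to approximate $f$ and every derivative $f^{(l)}$ on every compact subset of $\overline{\Omega}$.

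The main technical obstacle is bookkeeping the diagonal extraction so that a single subsequence couples all five convergences, including the derivative statements (2) and (5) at every order $l$ on every compact $J \subseteq \overline{\Omega}$; this is routine because each application of Theorem \ref{theorem 7.1} already couples the Pad\'e and Taylor data of every order for a given approximating polynomial, so a diagonal subsequence inherits every required estimate.
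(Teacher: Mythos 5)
Your proposal follows essentially the same route as the paper: apply Theorem \ref{theorem 7.1} with $L = \{\zeta\}$ and $K = K_m$ for the exhausting family of Lemma \ref{lemma 6.1}, intersect the resulting dense $G_\delta$ sets $\mathcal{A}_m$, and invoke Baire's theorem. The only difference is that you spell out the verification step (passing from $K_m$ back to an arbitrary compact $K$ and from polynomials back to $h \in A(K)$ via Mergelyan and a diagonal extraction), which the paper leaves implicit in the sentence ``$\mathcal{A}$ is exactly the set of all functions satisfying Theorem \ref{theorem 7.4}.''
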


\begin{proof} It suffices to apply Theorem \ref{theorem 7.1} for $L = \{ \zeta \}$ and $K = K_n$ for $n \geq 1$ given by Lemma \ref{lemma 6.1}. In that way we find a $G_{\delta}$ - dense subset $\mathcal{A}_n$ of $X^{\infty}(\Omega)$. Then the set $\mathcal{A} = \cap_{n \geq 1} \mathcal{A}_n$ is also a dense and $G_{\delta}$ subset of $X^{\infty}(\Omega)$, according to Baire's theorem. But $\mathcal{A}$ is exactly the set of all functions sattisfying Theorem \ref{theorem 7.4}.

\end{proof}

\begin{theorem} \label{theorem 7.5}

Let $F \subseteq \mathbb{N} \times \mathbb{N}$ be a non empty set that contains exactly a sequence $(p_n, q_n)_{n \geq 1}$ such that $p_n \to + \infty$. Also, let $\Omega \subseteq \mathbb{C}$ be a domain such that $\{ \infty \} \cup (\mathbb{C} \setminus \overline{\Omega})$ is connected. Then there exists a function $f \in X^{\infty}(\Omega)$ satisfying the following:

For every compact set $K \subseteq \mathbb{C} \setminus \overline{\Omega}$ with connected complement and for every function $h \in A(K)$, there exists a subsequence $(p_{k_n}, q_{k_n})_{n \geq 1}$ of the sequence $(p_n, q_n)_{n \geq 1}$ such that for every compact set $L \subseteq \overline{\Omega}$ there exists an index $n \equiv n(L) \in \mathbb{N}$ so that the following hold:

\begin{itemize}

\item[(1)]
$f \in D_{p_{k_n}, q_{k_n}}(\zeta)$ for every $n \geq n(L)$ and for every $\zeta \in L$

\item[(2)]
For every $l \in \mathbb{N}$ it holds:
$$ \sup_{\zeta \in L} \sup_{z \in L} |[f; p_{k_n} / q_{k_n}]^{(l)}_{\zeta}(z) - f^{(l)}(z)| \to 0 \; \text{as} \; n \to + \infty $$

\item[(3)]
$\sup_{\zeta \in L} \sup_{z \in K} |[f; p_{k_n} / q_{k_n}]_{\zeta}(z) - h(z)|\to 0$ as $n \to + \infty$

\item[(4)]
$ \sup_{\zeta \in L} \sup_{z \in K} |S_{p_{k_n}}(f, \zeta)(z) - h(z)| \to 0$ as $n \to + \infty$

\item[(5)]
For every $l \in \mathbb{N}$ it holds:
$$ \sup_{\zeta \in L} \sup_{z \in L} |S^{(l)}_{p_{k_n}}(f, \zeta)(z) - f^{(l)}(z)| \to 0 \; \text{as} \; n \to + \infty $$

\end{itemize}

Moreover, the set of all functions satisfying $(1) - (5)$ is dense and $G_{\delta}$ in $X^{\infty}(\Omega)$. 

\end{theorem}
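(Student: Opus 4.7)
The plan is to deduce Theorem \ref{theorem 7.5} from repeated applications of Theorem \ref{theorem 7.1} combined with Baire's theorem and a diagonal extraction. First I would fix the exhausting family $L_k = \overline{\Omega} \cap \overline{B}(0, k)$ of compact subsets of $\overline{\Omega}$ and let $\{ K_m \}_{m \geq 1}$ be the sequence from Lemma \ref{lemma 6.1}. For every pair $(k, m) \in \mathbb{N} \times \mathbb{N}$ I apply Theorem \ref{theorem 7.1} with $L := L_k$ and $K := K_m$, obtaining a $G_{\delta}$ and dense subset $\mathcal{A}_{k, m} \subseteq X^{\infty}(\Omega)$. Baire's theorem then implies that $\mathcal{A} = \bigcap_{k, m \geq 1} \mathcal{A}_{k, m}$ is $G_{\delta}$ and dense in $X^{\infty}(\Omega)$; this will be the required class.

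Fix $f \in \mathcal{A}$, a compact set $K \subseteq \mathbb{C} \setminus \overline{\Omega}$ with connected complement, and $h \in A(K)$. I would choose $m_0$ with $K \subseteq K_{m_0}$ and, by Mergelyan's theorem on $K$, pick polynomials $f_{j_n}$ from the fixed enumeration $\{ f_j \}_{j \geq 1}$ so that $\sup_{z \in K} |f_{j_n}(z) - h(z)| < \frac{1}{n}$. For every pair $(k, n) \in \mathbb{N} \times \mathbb{N}$, the membership $f \in \mathcal{A}_{k, m_0}$ applied to the polynomial $f_{j_n}$ provides a subsequence $(p_{\sigma^{k, n}_r}, q_{\sigma^{k, n}_r})_{r \geq 1}$ of $(p_n, q_n)_{n \geq 1}$ along which all five conclusions of Theorem \ref{theorem 7.1} hold with $L_k, K_{m_0}, f_{j_n}$ in place of $L, K, h$.

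The key step is a diagonal extraction producing a single subsequence $(p_{\mu_n}, q_{\mu_n})$ that works for every compact $L$. I would construct $\mu_n$ inductively with $p_{\mu_n} > p_{\mu_{n - 1}}$ and $(p_{\mu_n}, q_{\mu_n}) = (p_{\sigma^{n, n}_{r_n}}, q_{\sigma^{n, n}_{r_n}})$, choosing $r_n$ so large that $f \in D_{p_{\mu_n}, q_{\mu_n}}(\zeta)$ for every $\zeta \in L_n$ and every supremum appearing in conditions $(2)-(5)$, now taken over $\zeta \in L_n$ and $z \in L_n$ or $z \in K_{m_0}$ and for all derivative orders $l = 0, 1, \ldots, n$, is less than $\frac{1}{n}$. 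Given any compact $L \subseteq \overline{\Omega}$ I pick $k_0$ with $L \subseteq L_{k_0}$ and set $n(L) = k_0$. For $n \geq n(L)$, monotonicity of the suprema in $L$ and the triangle inequality (splitting $h = (h - f_{j_n}) + f_{j_n}$ on $K$) give $\sup_{\zeta \in L} \sup_{z \in K} |[f; p_{\mu_n} / q_{\mu_n}]_{\zeta}(z) - h(z)| < \frac{2}{n}$ and analogous estimates for the Taylor partial sums and the derivative conditions, proving $(1)-(5)$.

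The main obstacle is designing the diagonal to simultaneously control the Hankel nonvanishing $f \in D_{p_{\mu_n}, q_{\mu_n}}(\zeta)$, both the Pad\'{e} and Taylor approximation of $h$ on $K$, and the approximation of $f^{(l)}$ on arbitrarily large compacts for arbitrarily high $l$, all with a single subsequence independent of $L$. This is made possible by the fact that in Theorem \ref{theorem 7.1} both the source $\zeta \in L_n$ and the target $z \in L_n$ in the derivative conditions are the same growing compact, so the finite-stage condition ``within $\frac{1}{n}$ on $L_n$'' automatically covers every smaller compact $L$ eventually, and similarly on the side of $K_{m_0}$.
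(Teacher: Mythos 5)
Your proposal is correct and follows essentially the same route as the paper: the paper also applies Theorem \ref{theorem 7.1} with $L = L_k = \overline{\Omega} \cap \overline{B}(0,k)$ and $K = K_m$ from Lemma \ref{lemma 6.1}, intersects the resulting $G_{\delta}$-dense sets $\mathcal{A}_{k,m}$, and invokes Baire's theorem. The only difference is that you spell out the diagonal extraction and the Mergelyan/triangle-inequality step identifying the intersection with the universal class, which the paper leaves implicit.
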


\begin{proof} We apply Theorem \ref{theorem 7.1} for $L = L_k = \overline{\Omega} \cap \overline{B}(0, k)$ and $K = K_n$ given by Lemma \ref{lemma 6.1} and we obtain a $G_{\delta}$ - dense subset of $X^{\infty}(\Omega)$; the set $\mathcal{A}_{k, n}$. Then the intersection $\mathcal{A} = \cap_{k, n \geq 1} \mathcal{A}_{k, n}$ is also a $G_{\delta}$ - dense set subset of $X^{\infty}(\Omega)$ according to Baire's theorem. But $\mathcal{A}$ is exactly the set of all functions satisfying Theorem \ref{theorem 7.5}.

\end{proof}

\section{Splitting the boundary} %8

In this section we combine and strengthen the results of \cite{enteka} and \cite{Kar.Nest.}. We consider an open set $\Omega \subseteq \mathbb{C}$ and $\{ L_n \}_{n \geq 1}$ a sequence of compact subsets of $\overline{\Omega}$ satisfying the following properties:

\begin{itemize}

\item[(1)]
$\overline{L_n \cap \Omega} = L_n$ for every $n \geq 1$.

\item[(2)]
Each connected component of $\{ \infty \} \cup \mathbb{C} \setminus L_n$ contains at least a connected component of $\{ \infty \} \cup \mathbb{C} \setminus \overline{\Omega}$.

\item[(3)]
Every compact subset of $\Omega$ is contained in one of the sets $\{ L_n \}_{n \geq 1}$.

\end{itemize}

Let $T^{\infty}(\Omega) \equiv T^{\infty}(\Omega, \{ L_n \}_{n \geq 1})$ be the space of all functions $f \in H(\Omega)$ such that for every defivative $f^{(l)}$ ($l \geq 0$) of $f$ and for every $L_n$ ($n \geq 1$), the restriction $f^{(l)}_{| L_n \cap \Omega}$ is uniformly continuous and therefore it extends continuously on $\overline{L_n \cap \Omega} = L_n$.

We endow this space with the seminorms $\sup_{z \in L_n} |f^{(l)}(z)|$ for $l \geq 0$ and for $n \geq 1$. In that way, $T^{\infty}(\Omega)$ becomes a Fr\'{e}chet space, containinig all rational functions with poles off the set $\cup_{n \geq 1} L_n$.

Consider now $Y^{\infty}(\Omega)$ to be the closure in $T^{\infty}(\Omega)$ all rational functions with poles off $\cup_{n \geq 1} L_n$. Since $Y^{\infty}(\Omega)$ is a closed subset of a complete metric space, it is also a complete metric space itself.

The reader is prompted to verify the following:

\begin{itemize}

\item[(1)]
If $\{ \infty \} \cup \mathbb{C} \setminus \overline{\Omega}$ is connected, then the set of polynomials (as elements of $T^{\infty}(\Omega)$) is a dense subset of $Y^{\infty}(\Omega)$.

\item[(2)]
For every $f \in Y^{\infty}(\Omega)$ and for every basic open set $\mathcal{B}$ containing $f$ there exists another basic open set of the form:
$$ \{ g \in Y^{\infty}(\Omega) : \sup_{z \in L_M} |f^{(l)}(z) - g^{(l)}(z)| < \varepsilon \; \text{for every} \; l = 0, 1, \cdots, N \} $$

for an appropriate choice of $M, N \geq 1$ and $\varepsilon > 0$, that is contained in $\mathcal{B}$.

\end{itemize}

We present now the main theorem of this section.

\begin{theorem} \label{theorem 8.1}

Let $F \subseteq \mathbb{N} \times \mathbb{N}$ be a non empty set that contains exactly a sequence $(p_n, q_n)_{n \geq 1}$ such that $p_n \to + \infty$. Also, let $\Omega \subseteq \mathbb{C}$ be an open set such that $\{ \infty \} \cup \mathbb{C} \setminus \overline{\Omega}$ is connected and $K \subseteq \mathbb{C}$ be a compact set with connected complement such that $K \cap L_n = \emptyset$ for every $n \in \mathbb{N}$. In addition, let $m \in \mathbb{N}$ be a fixed natural number. 

Then there exists a function $f \in Y^{\infty}(\Omega)$ such that for every polynomial $P$ there exists a subsequence $(p_{k_n}, q_{k_n})_{n \geq 1}$ of $(p_n, q_n)_{n \geq 1}$ satisfying the following properties:

\begin{itemize}

\item[(1)]
$f \in D_{p_{k_n}, q_{k_n}}(\zeta)$ for every $\zeta \in L_m$

\item[(2)]
For every $l \in \mathbb{N}$ it holds:
$$ \sup_{\zeta \in L_m} \sup_{z \in L_m} |[f; p_{k_n} / q_{k_n}]^{(l)}_{\zeta}(z) - f^{(l)}(z)| \to 0 \; \text{as} \; n \to + \infty $$

\item[(3)]
For every $l \in \mathbb{N}$ it holds:
$$ \sup_{\zeta \in L_m} \sup_{z \in K} |[f; p_{k_n} / q_{k_n}]_{\zeta}^{(l)}(z) - P^{(l)}(z)| \to 0 \; \text{as} \; n \to + \infty $$

\item[(4)]
For every $l \in \mathbb{N}$ it holds:
$$ \sup_{\zeta \in L_m} \sup_{z \in L_m} |S^{(l)}_{p_{k_n}}(f, \zeta)(z) - f^{(l)}(z)| \to 0 \; \text{as} \; n \to + \infty $$

\item[(5)]
For every $l = 0, 1, 2, \cdots$ it holds:
$$ \sup_{\zeta \in L_m} \sup_{z \in K} |S^{(l)}_{p_{k_n}}(f, \zeta)(z) - P^{(l)}(z)| \to 0 \; \text{as} \; n \to + \infty $$
\end{itemize}

Moreover, the set of all functions satisfying $(1) - (5)$ is dense and $G_{\delta}$ in $Y^{\infty}(\Omega)$. 

\end{theorem}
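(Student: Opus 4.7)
The plan is to mimic the Baire-category scheme used in Theorem \ref{theorem 7.1}, with two small adaptations: every approximation is now required in every derivative order $l\in\mathbb{N}$, and the target on $K$ is a polynomial rather than an arbitrary $h\in A(K)$, which lets me upgrade Mergelyan's theorem to Runge's theorem combined with Weierstrass's theorem in order to transport approximation to all derivatives. First I would enumerate $\{P_j\}_{j\ge 1}$, the polynomials with coefficients in $\mathbb{Q}+i\mathbb{Q}$, and for each $(p,q)\in F$ and each $j,s\ge 1$ introduce four candidate open sets in $Y^\infty(\Omega)$:
\[
A(p,q,s)=\Bigl\{f:f\in D_{p,q}(\zeta)\;\forall \zeta\in L_m,\ \max_{0\le l\le s}\sup_{\zeta,z\in L_m}\bigl|[f;p/q]_\zeta^{(l)}(z)-f^{(l)}(z)\bigr|<\tfrac{1}{s}\Bigr\},
\]
\[
B(p,q,j,s)=\Bigl\{f:f\in D_{p,q}(\zeta)\;\forall \zeta\in L_m,\ \max_{0\le l\le s}\sup_{\zeta\in L_m,\, z\in K}\bigl|[f;p/q]_\zeta^{(l)}(z)-P_j^{(l)}(z)\bigr|<\tfrac{1}{s}\Bigr\},
\]
together with analogous sets $C(p,s)$ and $E(p,j,s)$ obtained by replacing $[f;p/q]_\zeta$ with $S_p(f,\zeta)$. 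A direct verification shows that the target class $\mathcal{U}$ of functions satisfying $(1)$--$(5)$ equals $\bigcap_{j,s\ge 1}\bigcup_{(p,q)\in F} A\cap B\cap C\cap E$. Since $Y^\infty(\Omega)$ is a complete metric space, Baire's theorem reduces the problem to proving openness of the four families and density of the inner union for each fixed pair $j,s$.

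Openness is proved along the lines of Subsections $7.1$--$7.2$, reducing to $q\ge 1$ (the $q=0$ case collapses into the Taylor families). If $f$ lies in one of the Padé families, the Hankel determinant $D_{p,q}(f,\zeta)$ and the Jacobi denominator $B(f,\zeta)(z)$ are uniformly bounded below on $L_m$ and on $L_m\times L_m$ or $L_m\times K$, respectively. For $g\in Y^\infty(\Omega)$ close to $f$ in the seminorms associated with a sufficiently large $L_{n_0}\supseteq L_m$ and sufficiently many derivatives, Cauchy estimates give that the first $p+q+1$ Taylor coefficients of $g$ at each $\zeta\in L_m$ are uniformly close to those of $f$, so the lower bounds persist for $g$. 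Writing
\[
[f;p/q]_\zeta-[g;p/q]_\zeta=\frac{A(f,\zeta)\bigl(B(g,\zeta)-B(f,\zeta)\bigr)+B(f,\zeta)\bigl(A(f,\zeta)-A(g,\zeta)\bigr)}{B(f,\zeta)B(g,\zeta)}
\]
and applying Leibniz's rule, together with the fact that $(1/B)^{(l)}$ is a polynomial expression in $B',\ldots,B^{(l)}$ divided by $B^{l+1}$, one controls every derivative up to order $s$ uniformly in $\zeta$ and $z$. Openness of $C(p,s)$ and $E(p,j,s)$ is easier, since $S_p(f,\zeta)(z)$ depends linearly and continuously on the first $p+1$ Taylor coefficients of $f$ at $\zeta$.

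For density, fix $j,s\ge 1$ and a basic neighborhood of $f\in Y^\infty(\Omega)$ requiring $\sup_{z\in L_M}|f^{(l)}(z)-g^{(l)}(z)|<\varepsilon$ for $l\le N$. Since polynomials are dense in $Y^\infty(\Omega)$ under the hypothesis that $\{\infty\}\cup\mathbb{C}\setminus\overline{\Omega}$ is connected, I may assume $g$ is a polynomial. Pick $n_0$ with $L_m\cup L_M\subseteq L_{n_0}$. The compact sets $L_{n_0}$ and $K$ are disjoint (since $K\cap L_n=\emptyset$ for every $n$) and each has connected complement in $\tilde{\mathbb{C}}$, so their union $L_{n_0}\cup K$ also has connected complement. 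Enclosing them in disjoint open simply connected neighborhoods $G_1\supseteq L_{n_0}$ and $G_2\supseteq K$, I define $w$ holomorphic on $G_1\cup G_2$ by $w=g$ on $G_1$ and $w=P_j$ on $G_2$. Runge's theorem yields a polynomial approximating $w$ uniformly on compact subsets of $G_1\cup G_2$, and Weierstrass's theorem extends this to arbitrary finite orders of derivatives on $L_{n_0}\cup K$; pick such a polynomial $Q$ approximating $w$ and its first $\max(s,N)$ derivatives to arbitrary accuracy on $L_{n_0}\cup K$. Select $(p_{k_n},q_{k_n})\in F$ with $p_{k_n}>\deg Q$ and set $u(z)=Q(z)+dz^{p_{k_n}}$ with $|d|>0$ sufficiently small. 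Then $u$ is a polynomial of exact degree $p_{k_n}$, so Proposition \ref{proposition 2.4} gives $u\in D_{p_{k_n},q_{k_n}}(\zeta)$ for every $\zeta\in\mathbb{C}$ and $[u;p_{k_n}/q_{k_n}]_\zeta\equiv u\equiv S_{p_{k_n}}(u,\zeta)$. All four memberships $u\in A\cap B\cap C\cap E$ thus collapse to the single estimate $\max_{l\le s}\sup_{z\in L_m\cup K}|u^{(l)}(z)-w^{(l)}(z)|<1/s$, delivered by the choice of $Q$ and the smallness of $|d|$.

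The main technical obstacle is the uniform derivative control of Padé approximants in the openness step: treating $[f;p/q]_\zeta^{(l)}(z)$ jointly in $\zeta\in L_m$, $z\in L_m\cup K$ and $l\le s$ requires combining the uniform lower bound on $|B(f,\zeta)(z)|$ with the Leibniz expansion of $(A/B)^{(l)}$, all viewed as continuous functionals of finitely many Taylor coefficients of $f$ at $\zeta$. Once this is in hand, completeness of $Y^\infty(\Omega)$ and Baire's theorem deliver the dense $G_\delta$ conclusion for $\mathcal{U}$.
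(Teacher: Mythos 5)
Your proposal follows essentially the same route as the paper's proof: the same four families of sets (Padé/Taylor approximation on $L_m$ and on $K$, with derivatives up to order $s$), the same Baire-category reduction, the same Hankel-determinant and Jacobi-denominator lower bounds for openness, and the same density argument via reduction to a polynomial, separation of $L_M$ and $K$ into disjoint simply connected neighborhoods, Runge plus Weierstrass, and the perturbation $Q(z)+dz^{p}$ to force exact degree so that Proposition \ref{proposition 2.4} applies. The only cosmetic difference is that you invoke Cauchy estimates to pass from seminorm closeness to coefficient closeness, whereas in $Y^{\infty}(\Omega)$ the seminorms already control the derivatives $g^{(k)}(\zeta)$ at points $\zeta\in L_m$ directly; this changes nothing.
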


\begin{proof} Let $\{ f_j \}_{j \geq 1}$ be an enumeration of polynomials with coefficients in $\mathbb{Q} + i \mathbb{Q}$. 

Let also $\mathcal{U}$ be the set of all functions in $Y^{\infty}(\Omega)$ satisfying $(1) - (5)$. Our aim is to prove that $\mathcal{U}$ is a dense and $G_{\delta}$ subset of $Y^{\infty}(\Omega)$. 

Now, for every $(p, q) \in F$ and for every $j, s \geq 1$ we consider the following sets: 
$$ A(p, q, s) = \{ f \in Y^{\infty}(\Omega) : f \in D_{p, q}(\zeta) \; \text{for every} \; \zeta \in L_m \; \text{and} $$ 
$$ \sup_{\zeta \in L_m} \sup_{z \in L_m} |[f; p / q]_{\zeta}^{(l)}(z) - f^{(l)}(z)| < \frac{1}{s} \; \text{for every} \; l = 0, 1, 2, \cdots, s \} $$
$$ B(p, q, j, s) = \{ f \in Y^{\infty}(\Omega) : f \in D_{p, q}(\zeta) \; \text{for every} \; \zeta \in L_m \; \text{and} $$
$$ \sup_{\zeta \in L_m} \sup_{z \in K} |[f; p / q]_{\zeta}^{(l)}(z) - f_{j}^{(l)}(z)| < \frac{1}{s} \; \text{for every} \; l = 0, 1, 2, \cdots, s \} $$
$$ E(p, s) = \{ f \in Y^{\infty}(\Omega) : \sup_{\zeta \in L_m} \sup_{z \in L_m} |S_{p}^{(l)}(f, \zeta)(z) - f^{(l)}(z)| < \frac{1}{s} $$
$$ \text{for every} \; l = 0, 1, 2, \cdots, s \} $$
$$ F(p, j, s) = \{ f \in Y^{\infty}(\Omega) : \sup_{\zeta \in L_m} \sup_{z \in K} |S_{p}^{(l)}(f, \zeta)(z) - f_{j}^{(l)}(z)| < \frac{1}{s} $$
$$ \text{for every} \; l = 0, 1, 2, \cdots, s \}$$

One can verify that the following holds:
$$ \mathcal{U} = \bigcap_{j, s \geq 1} \bigcup_{(p, q) \in F} A(p, q, s) \cap B(p, q, j, s) \cap E(p, s) \cap F(p, j, s) $$

So, according to Baire's theorem it is enough to prove the following:

\begin{claim} \label{claim 8.2}

The sets $A(p, q, s), B(p, q, j, s), E(p, s)$ and $F(p, j, s)$ are open subsets of $Y^{\infty}(\Omega)$ for every parameter $(p, q) \in F$ and for every $j, s \geq 1$.

\end{claim}

\begin{claim} \label{claim 8.3}

The set $\mathcal{A}(j, s) = \bigcup_{(p, q) \in F} A(p, q, s) \cap B(p, q, j, s) \cap E(p, s) \cap F(p, j, s)$ is a dense subset of $Y^{\infty}(\Omega)$ for every parameter $j, s \geq 1$.

\end{claim}

\subsection{The case of \boldmath{$A(p, q, s)$.}} %8

We fix the parameters $(p, q) \in F$ and $s \geq 1$. Let $f \in A(p, q, s)$, $g \in Y^{\infty}(\Omega)$ and $a > 0$ be small. The number $a > 0$ will be determined later on. Suppose that the following holds:
$$ \sup_{z \in L_m} |f^{(l)}(z) - g^{(l)}(z)| < a $$
$$ \text{for every} \; l = 0, 1, 2, \cdots, p + q + s \;\;\; (8.1.1) $$

We will show that if $a > 0$ is small enough we obtain that $g \in A(p, q, s)$.

Since $f \in A(p, q, s)$, the Hankel determinant $D_{p, q}(f, \zeta)$ for $f$ is not equal to zero and depends continuously on $\zeta \in L_m$; thus there exists a $\delta > 0$ such that $|D_{p, q}(f, \zeta)| > \delta$ for every $\zeta \in L_m$. From relation $(8.1.1)$ if $a > 0$ is  sufficiently small then the Hankel determinant $D_{p, q}(g, \zeta)$ for $g$ is greater in absolute value than $\frac{\delta}{2} > 0$ and this holds for every $\zeta \in L_m$. In other words, $g \in D_{p, q}(\zeta)$ for every $\zeta \in L_m$.

On the other hand, $f(z) \in \mathbb{C}$ for every $z \in L_m$; so $[f; p / q]_{\zeta}(z) \in \mathbb{C}$ for every $\zeta \in L_m$ and for every $z \in L_m$. It follows that $B(f, \zeta)(z) \neq 0$ for every $\zeta \in L_m$ and for every $z \in L_m$, where the polynomial $B(f, \zeta)(z)$ is defined according to the Jacobi formula. Thus, we have that there exists a $0 < \delta'' < 1$ such that $|B(f, \zeta)(z)| > \delta''$ for every $(\zeta, z) \in L_m \times L_m$ (since the function $B(f, \zeta)(z)$ is continuous on $L_m \times L_m$). By continuity and for $a > 0$ sufficiently small, one can achieve the following: 
$$ |B(g, \zeta)(z)| > \frac{\delta''}{2} \; \text{for every} \; (\zeta, z) \in L_m \times L_m $$
 
For every $l = 0, 1, 2, \cdots, p + q + s$ and from the triangle enequality, we obtain: 
$$ \sup_{\zeta \in L_m} \sup_{z \in L_m} |[g; p / q]_{\zeta}^{(l)}(z) - g^{(l)}(z)| \leq $$
$$  \sup_{\zeta \in L_m} \sup_{z \in L_m} |[g; p / q]_{\zeta}^{(l)}(z) - [f; p / q]_{\zeta}^{(l)}(z)| $$
$$ + \sup_{\zeta \in L_m} \sup_{z \in L_m} |[f; p / q]_{\zeta}^{(l)}(z) - f^{(l)}(z)| $$
$$ + \sup_{z \in L_m} |f^{(l)}(z) - g^{(l)}(z)| \;\;\; (8.1.2) $$

The term $\sup_{z \in L_m} |f^{(l)}(z) - g^{(l)}(z)|$ can become as small as we want for every $l = 0, 1, 2, \cdots, p + q + s$, provided that $a > 0$ is sufficiently small.

The term $\sup_{\zeta \in L_m} \sup_{z \in L_m} |[f; p / q]_{\zeta}^{(l)}(z) - f^{(l)}(z)|$ is strictly less than $\frac{1}{s}$ for every $l = 0, 1, 2, \cdots, p + q + s)$ since $f \in A(p, q, s)$. It remains to control the term:
$$ \sup_{\zeta \in L_m} \sup_{z \in L_m} |[g; p / q]_{\zeta}^{(l)}(z) - [f; p / q]_{\zeta}^{(l)}(z)| $$

and that for every $l = 0, 1, 2, \cdots, p + q + s$.

The denominators of $[f; p / q]_{\zeta}^{(l)}$ and $[g; p / q]_{\zeta}^{(l)}$ are uniformly bounded far from $0$ for every $l = 0, 1, 2, \cdots, p + q + s$. 

So, the quantity $\sup_{\zeta \in L_m} \sup_{z \in L_m} |[g; p / q]_{\zeta}^{(l)}(z) - [f; p / q]_{\zeta}^{(l)}(z)|$ can become arbitrary small, since the coefficients of the polynomials $A(f, \zeta)$ and $A(g, \zeta)$ are uniformly close enough one by one. The same happens for the polynomials $B(f, \zeta)$ and $B(g, \zeta)$. This allows us to control every finite set of derivatives. It follows that $g \in A(p, q, s)$ and so $A(p, q, s)$ is an open subset of $Y^{\infty}(\Omega)$.

\subsection{The case of \boldmath{$B(p, q, j, s)$.}} %8

We fix the parameters $(p, q) \in F)$ and $j, s \geq 1$. Let $f \in B(p, q, j, s)$, $g \in Y^{\infty}(\Omega)$ and $a > 0$ be small. The number $a > 0$ will be determined later on. Suppose that the following holds:
$$ \sup_{z \in L_m} |f^{(l)}(z) - g^{(l)}(z)| < a $$
$$ \text{for every} \; l = 0, 1, 2, \cdots, p + q + s \;\;\; (8.2.1) $$

We will show that if $a > 0$ is small enough we obtain that $g \in B(p, q, j, s)$.

In order to prove that $g \in D_{p, q}(\zeta)$ for every $\zeta \in L_m$ we follow exactly the same steps as we did in subsection $8.1$.

Since $f \in B(p, q, j, s)$ we have that it holds $\sup_{\zeta \in L_m} \sup_{z \in K} |[f; p / q]_{\zeta}^{(l)}(z) - f_{j}^{(l)}(z)| < \frac{1}{s}$ for every $\zeta \in L_m$, for every $z \in K$ and for every $l = 0, 1, 2, \cdots, p + q + s$; thus $[f; p / q]_{\zeta}^{(l)}(z) \in \mathbb{C}$ for every $\zeta \in L_m$, for every $z \in K$ and for every $l = 0, 1, 2, \cdots, p + q + s$. It follows that there exists a $\delta' > 0$ such that $|B(f, \zeta)(z)| > \delta'$ for every $\zeta \in L_m$ and for every $z \in K$, since $L_m \times K$ is a compact set, where $B(f, \zeta)$ is the denominator of the Jacobi formula for $f$. From relation $(8.2.1)$ and the Jacobi formula it follows that for $a > 0$ sufficiently small, it holds:
$$ |B(g, \zeta)(z)| > \frac{\delta'}{2} \; \text{for every} \; (\zeta, z) \in L_m \times K $$

In order to complete the proof it suffices to prove the following:
$$ \sup_{\zeta \in L_m} \sup_{z \in K} |[g; p / q]_{\zeta}^{(l)}(z) - [f; p / q]_{\zeta}^{(l)}(z)| < $$
$$ < \frac{1}{s} - \sup_{\zeta \in L_m} \sup_{z \in K} |[f; p / q]_{\zeta}^{(l)}(z) - f_{j}^{(l)}(z)| \;\;\; (8.2.2) $$

for every $l = 0, 1, 2, \cdots, p + q + s$ and then the result yields from the triangle inequality. This is valid, since the term:
$$ \sup_{\zeta \in L_m} \sup_{z \in K} |[g; p / q]_{\zeta}^{(l)}(z) - [f; p / q]_{\zeta}^{(l)}(z)| $$ 

can became arbitrary small. It follows that $g \in B(p, q, j, s)$ and so $B(p, q, j, s)$ is an open subset of $Y^{\infty}(\Omega)$.

\subsection{The case of \boldmath{$E(p, s)$.}} %8

We fix the parameters $(p, q) \in F$ and $j, s \geq 1$. Let $f \in E(p, s)$, $g \in Y^{\infty}(\Omega)$ and $a > 0$ small enough. The number $a > 0$ will be determined later on. Suppose that the following holds:
$$ \sup_{z \in L_m} |f^{(l)}(z) - g^{(l)}(z)| < a $$
$$ \text{for every} \; l = 0, 1, 2, \cdots, p + q + s \;\;\; (8.3.1) $$

We will show that if $a > 0$ is small enough we obtain that $g \in E(p, s)$.

Our aim is to prove that for the right choise of $a > 0$, it holds:
$$ \sup_{\zeta \in L_m} \sup_{z \in L_m} |S_{p}^{(l)}(g, \zeta)(z) - S_{p}^{(l)}(f, \zeta)(z)| < $$
$$ < \frac{1}{s} - \sup_{\zeta \in L_m} \sup_{z \in L_m} |S_{p}^{(l)}(f, \zeta)(z) - f^{(l)}(z)| $$
$$ \text{for every} \; l = 0, 1, 2, \cdots, p + q + s \;\;\; (8.3.2) $$ 

The last inequality is valid, provided that $a > 0$ is sufficiently small and by using relation $(8.3.1)$. The result yields from the triangle inequality.

\subsection{The case of \boldmath{$F(p, j, s)$.}} %8

We fix the parameters $(p, q) \in F)$ and $s \geq 1$. Let $f \in F(p, s)$, $g \in Y^{\infty}(\Omega)$ and $a > 0$ small enough. The number $a > 0$ will be determined later on. Suppose that the following holds:
$$ \sup_{z \in L_m} |f^{(l)}(z) - g^{(l)}(z)| < a $$
$$ \text{for every} \; l = 0, 1, 2, \cdots, p + q + s \;\;\; (8.4.1) $$

We will show that if $a > 0$ is small enough we obtain that $g \in F(p, s)$.

Our aim is to prove that for the right choise of $a > 0$, it holds:
$$ \sup_{\zeta \in L_m} \sup_{z \in K} |S_{p}^{(l)}(g, \zeta)(z) - S_{p}^{(l)}(f, \zeta)(z)| < $$
$$ < \frac{1}{s} - \sup_{\zeta \in L_m} \sup_{z \in K} |S_{p}^{(l)}(f, \zeta)(z) - f_j^{(l)}(z)| $$
$$ \text{for every} \; l = 0, 1, 2, \cdots, p + q + s \; \;\; (8.4.2) $$ 

The last inequality is valid, provided that $a > 0$ is small enough and by using relation $(8.4.1)$ because the coefficients of the polynomials $S_{p}(f, \zeta)$ and $S_{p}(g, \zeta)$ are uniformly close enough one by one. The result follows from the triangle inequality.

The proof of Claim \ref{claim 8.2} is complete.

\subsection{Density of \boldmath{$\mathcal{A}(j, s)$.}} %8

In order to prove Claim \ref{claim 8.3}, we fix the parameters $j, s \geq 1$ and we want to prove that the set: 
$$ \mathcal{A}(j, s) = \bigcup_{(p, q) \in F} A(p, q, s) \cap B(p, q, j, s) \cap E(p, s) \cap F(p, j, s) $$ 

is a dense subset of $Y^{\infty}(\Omega)$.

Let $g \in Y^{\infty}(\Omega)$, $\varepsilon > 0$ and $M, N \in \mathbb{N}$. We want to find a function $f \in \mathcal{A}(j, s)$ so that the following holds:
$$ \sup_{z \in L_M} |f^{(l)}(z) - g^{(l)}(z)| < \varepsilon \; \text{for every} \; m = 0, 1, 2, \cdots, N $$

Since the sequence $\{ L_m \}_{m \geq 1}$ is increasing, there is no problem to assume that $m < M$ (and thus $L_m \subseteq L_M$). Moreover, we assume that $g$ is a polynomial, since $\{ \infty \} \cup \mathbb{C} \setminus \overline{\Omega}$ is connected (and so, as we have already mentioned the polynomials are a dense subset of $Y^{\infty}(\Omega)$).

The sets $L_M$ and $K$ are disjoint compact subsets of $\mathbb{C}$. Since every connected component of $\{ \infty \} \cup \mathbb{C} \setminus L_N$ contains a connected component of $\{ \infty \} \cup \mathbb{C} \setminus \overline{\Omega}$ and according to our hypothesis, the set $\{ \infty \} \cup \mathbb{C} \setminus \overline{\Omega}$ is connected, we know that $\{ \infty \} \cup \mathbb{C} \setminus L_N$ is also a connected set. Since $K$ has connected complement and $K \cap L_N = \emptyset$, we know that there exist open and simply connected sets $G_1, G_2 \subseteq \mathbb{C}$ so that $L_M \subseteq G_1$, $K \subseteq G_2$ and $G_1 \cap G_2 = \emptyset$. 

Consider now the function $w: G_1 \cup G_2 \to \mathbb{C}$ as defined below:
$$ w(z) = \begin{cases} f_j(z), \;\; \mbox{for every} \;\; z \in G_2 
\\ 
g(z),\;\; \mbox{for every} \;\; z \in G_1 \end{cases} $$

The function $w$ is well - defined (because $G_1 \cap G_2 = \emptyset$) and also is holomorphic in $G_1 \cup G_2$.

We know from Runge's theorem that there exists a sequence of polynomials $\{P_n\}_{n \geq 1}$ that converges uniformly to $w$ on every compact subset of $G_1 \cup G_2$. Now, from Weierstrass's theorem (since $G_1 \cup G_2$ is open) we know that the previous convergence is also valid for every derivative. So, there exists a polynomial $P$ that is uniformly close to the funtion $f_j$ on $K$ while every polynomial $P^{(l)}$ is uniformly close to the function $g^{(l)}$ on $L_M$ and that holds for every $l = 0, 1, 2, \cdots, N$.

Let $(p, q) \in F$ such that $p > deg(P(z))$. Since $p_n \to + \infty$, one can suppose that $(p, q) = (p_{k_n}, q_{k_n})$ for an appropriate index ${k_n} \in \mathbb{N}$. We notice that for every $d \in \mathbb{C} \setminus \{ 0\}$ it holds $deg(P(z) + dz^p) = p$, thus, for every $q \geq 0$ it holds $P(z) + dz^p \in D_{p, q}(\zeta)$ and also $[P(z) + dz^p; p / q]_{\zeta}(z) \equiv P(z) + dz^p$ for every $\zeta \in \mathbb{C}$ according to Proposition \ref{proposition 2.4}. If $d \to 0$, the polynomial $P(z) + dz^p$ converges uniformly to the polynomial $P(z)$ on each compact subset of $\mathbb{C}$.

We set $f(z) = P(z) + dz^p$ for $|d| > 0$ sufficiently small. It remains to show that $f \in \mathcal{A}(j, s)$. This is almost obvious; we have:

\begin{itemize}

\item[(1)]
$f \in A(p, q, s)$, since $[f; p / q]_{\zeta} \equiv f$ for ever $\zeta \in L_m$ and so the quantity:
$$ \sup_{\zeta \in L_m} \sup_{z \in L_m} |[f; p / q]_{\zeta}^{(l)}(z) - f^{(l)}(z)| = 0 $$

\item[(2)]
$f \in B(p, q, j, s)$, since $[f; p / q]_{\zeta} \equiv f$ for every $\zeta \in L_m$ and so the quantity:
$$ \sup_{\zeta \in L_m} \sup_{z \in K} |[f; p / q]_{\zeta}^{(l)}(z) - f_{j}^{(l)}(z)| $$

can be arbitrary small (provided that $|d|$ is small enough), since it holds $[f; p / q]_{\zeta} \equiv f \equiv S_p(f)$.

\item[(3)]
$f \in E(p, s)$, since $f \equiv S_{p}(f, \zeta) \equiv [f; p / q]_{\zeta}$ for every $\zeta \in L_m$ and so the quantity:
$$ \sup_{\zeta \in L_m} \sup_{z \in L_m} |S_{p}^{(l)}(f, \zeta)(z) - f_{j}^{(l)}(z)| $$

is exactly the same as in $(2)$.

\item[(4)]
$f \in F(p, j, s)$, since $f \equiv S_{p}(f, \zeta)$ for evey $\zeta \in L_m$ and so the quantity:
$$ \sup_{\zeta \in L_m} \sup_{z \in K} |S_{p}^{(l)}(f, \zeta)(z) - f^{(l)}(z)| = 0$$
\end{itemize}

The proof of Claim \ref{claim 8.3} is also complete. The result follows from Baire's theorem.

\end{proof}

We fix a sequence of compact sets $\{ K_m \}_{m \geq 1}$ with connected complements such that $K_m \cap L_n = \emptyset$ for every $n, m \geq 1$.

\begin{theorem} \label{theorem 8.4}

Let $F \subseteq \mathbb{N} \times \mathbb{N}$ be a non empty set that contains exactly a sequence $(p_n, q_n)_{n \geq 1}$ such that $p_n \to + \infty$. Also, let $\Omega \subseteq \mathbb{C}$ be an open set such that $\{ \infty \} \cup \mathbb{C} \setminus \overline{\Omega}$ is connected.

Then there exists a function $f \in Y^{\infty}(\Omega)$ such that for every compact set $K_m$ and for every polynomial $P$ there exists a subsequence $(p_{k_n}, q_{k_n})_{n \geq 1}$ of the sequence $(p_n, q_n)_{n \geq 1}$ such that for every compact set $L \subseteq \overline{\Omega}$ with $L \subseteq L_m$ for an index $m \geq 1$, there exists an index $n \equiv n(L) \in \mathbb{N}$ so that the following hold:

\begin{itemize}

\item[(1)]
$f \in D_{p_{k_n}, q_{k_n}}(\zeta)$ for every $\zeta \in L$ and for every $n \geq n(L)$ 

\item[(2)]
For every $l \in \mathbb{N}$ it holds:
$$ \sup_{\zeta \in L} \sup_{z \in L} |[f; p_{k_n} / q_{k_n}]^{(l)}_{\zeta}(z) - f^{(l)}(z)| \to 0 \; \text{as} \; n \to + \infty $$ 

\item[(3)]
For every $l \in \mathbb{N}$ it holds:
$$ \sup_{\zeta \in L} \sup_{z \in K_m} |[f; p_{k_n} / q_{k_n}]_{\zeta}^{(l)}(z) - P^{(l)}(z)| \to 0 \; \text{as} \; n \to + \infty $$

\item[(4)]
For every $l \in \mathbb{N}$ it holds:
$$ \sup_{\zeta \in L} \sup_{z \in L} |S^{(l)}_{p_{k_n}}(f, \zeta)(z) - f^{(l)}(z)| \to 0 \; \text{as} \; n \to + \infty $$

\item[(5)]
For every $l = 0, 1, 2, \cdots$ it holds:
$$ \sup_{\zeta \in L} \sup_{z \in K_m} |S^{(l)}_{p_{k_n}}(f, \zeta)(z) - P^{(l)}(z)| \to 0 \; \text{as} \; n \to + \infty $$ 

\end{itemize}

Moreover, the set of all functions satisfying $(1) - (5)$ is dense and $G_{\delta}$ in $Y^{\infty}(\Omega)$. 

\end{theorem}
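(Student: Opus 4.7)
The plan is to reduce Theorem \ref{theorem 8.4} to repeated applications of Theorem \ref{theorem 8.1} followed by a Baire-category intersection and a diagonal extraction. For each pair $(m_1, m_2) \in \mathbb{N} \times \mathbb{N}$ I would apply Theorem \ref{theorem 8.1} with its distinguished compact of centers played by $L_{m_1}$ and with $K := K_{m_2}$; this is legitimate because the standing hypothesis on the sequence $\{K_m\}_{m \geq 1}$ guarantees that each $K_m$ has connected complement and is disjoint from every $L_n$. This yields a dense $G_\delta$ subset $\mathcal{A}_{m_1, m_2}$ of $Y^{\infty}(\Omega)$ consisting of functions that are simultaneously Pad\'{e}--Taylor universal with respect to every polynomial at all centers $\zeta \in L_{m_1}$ and with approximation on $K_{m_2}$. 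Setting $\mathcal{A} = \bigcap_{m_1, m_2 \geq 1} \mathcal{A}_{m_1, m_2}$, Baire's theorem in the complete metric space $Y^{\infty}(\Omega)$ ensures that $\mathcal{A}$ is again dense and $G_\delta$.

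The main remaining task is to verify that $\mathcal{A}$ coincides with the class of functions claimed in Theorem \ref{theorem 8.4}, and this is where a diagonal extraction enters. Fix $f \in \mathcal{A}$, a compact $K_m$ from the prescribed sequence, and a polynomial $P$. For every $m_1 \geq 1$ the membership $f \in \mathcal{A}_{m_1, m}$ supplies a subsequence $\tau_{m_1}$ of $(p_n, q_n)_{n \geq 1}$ along which the five conclusions of Theorem \ref{theorem 8.1} hold for the centers in $L_{m_1}$, the compact $K_m$, and the polynomial $P$, uniformly on every fixed range of derivative orders. I would then extract a single diagonal subsequence $(p_{k_n}, q_{k_n})_{n \geq 1}$ by choosing, for each $n$, an index drawn from $\tau_n$ that is strictly larger than the previously chosen one and is large enough that the five relevant suprema, computed on $L_n$ and $K_m$ and over derivatives of order at most $n$, are strictly smaller than $1/n$.

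For any compact $L \subseteq \overline{\Omega}$ with $L \subseteq L_{m'}$ I would set $n(L) = m'$; then for every $n \geq n(L)$, since $L \subseteq L_{m'} \subseteq L_n$, the inclusion $f \in D_{p_{k_n}, q_{k_n}}(\zeta)$ holds for all $\zeta \in L$, and each supremum appearing in (2)--(5) taken over $L$ is dominated by its counterpart taken over $L_n$, which is less than $1/n$ for every derivative order $l \leq n$. Letting $n \to +\infty$ yields the five desired convergences for every fixed $l$, while the reverse inclusion $\mathcal{A} \supseteq \{f : f \text{ satisfies Theorem \ref{theorem 8.4}}\}$ is immediate on taking $L = L_{m_1}$. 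I expect the main obstacle to be the bookkeeping that couples the diagonal index $n$ simultaneously to the size $L_n$ of the set of centers, to the error threshold $1/n$, and to the derivative range $l \leq n$, so that every fixed compact $L$ contained in some $L_{m'}$ and every fixed derivative order $l$ are eventually served by one and the same subsequence $(p_{k_n}, q_{k_n})$.
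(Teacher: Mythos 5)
Your proposal is correct and follows essentially the same route as the paper: the paper likewise applies Theorem \ref{theorem 8.1} to each pair consisting of a center compact $L_{m_1}$ and a compact $K_{m_2}$, intersects the resulting dense $G_\delta$ sets $\mathcal{A}_{m_1, m_2}$ via Baire's theorem, and identifies the intersection with the class of Theorem \ref{theorem 8.4} by a diagonal argument. Your write-up simply supplies the details of the diagonal extraction that the paper leaves as a one-phrase remark.
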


\begin{proof} Let $L \subseteq \Omega$ be a compact set. There exists an index $n \in \mathbb{N}$ such that $L \subseteq L_n$. Now for every $m \geq 1$ we apply Theorem \ref{theorem 8.1} for $K = K_m$ and according to Baire's theorem we obtain a dense and $G_{\delta}$ set of $Y^{\infty}(\Omega)$; the set $\mathcal{A}_{n, m}$. If $\mathcal{A}$ is the set of all functions satisfying Theorem \ref{theorem 8.4} it follows from a diagonal argument that $\mathcal{A} = \cap_{n, m \geq 1} \mathcal{A}_{n, m}$.

\end{proof}

\section*{Acknowledgements}

I would like to thank my PhD supervisor professor Vassili Nestoridis for his precious help towards completing this draft.

\bigskip
\bigskip
\bigskip

\noindent
University of Athens
\\
Department of Mathematics
\\
157 84 Panepistimioupolis
\\
Athens
\\
Greece

\bigskip

\noindent
email address:
\\
K. Makridis (kmak167@gmail.com)

\end{document}